\apptocmd{\sloppy}{\hbadness 10000\relax}{}{}
\apptocmd{\sloppy}{\vbadness 10000\relax}{}{}
\renewcommand{\baselinestretch}{1.1}
\newtheorem{theorem}{Theorem}[section]
\newtheorem{lemma}[theorem]{Lemma}
\newtheorem{corollary}[theorem]{Corollary}
\newtheorem{proposition}[theorem]{Proposition}
\theoremstyle{definition}
\newtheorem{definition}[theorem]{Definition}
\theoremstyle{remark}
\newtheorem{remark}[theorem]{Remark}
\newtheorem{question}[theorem]{Question}
\let\inf\relax \DeclareMathOperator*\inf{\vphantom{p}inf}
\newcommand{\Haus}{\mathcal{H}}
\newcommand{\Net}{\mathcal{M}}
\newcommand{\XX}{\mathbb{X}}
\newcommand{\RR}{\mathbb{R}}
\newcommand{\ZZ}{\mathbb{Z}}
\def\res{\hbox{ {\vrule height .22cm}{\leaders\hrule\hskip.2cm} } }
\newcommand{\Child}{\mathop\mathsf{Child}\nolimits}
\newcommand{\gap}{\mathop\mathrm{gap}\nolimits}
\newcommand{\side}{\mathop\mathrm{side}\nolimits}
\newcommand{\vol}{\mathop\mathrm{vol}\nolimits}
\newcommand{\dist}{\mathop\mathrm{dist}\nolimits}
\newcommand{\interior}{\mathop\mathrm{int}\nolimits}
\newcommand{\diam}{\mathop\mathrm{diam}\nolimits}
\numberwithin{equation}{section}
\numberwithin{figure}{section}
\begin{document}

\title{Hausdorff dimension of caloric measure}
\date{July 12, 2023}
\author{Matthew Badger}
\author{Alyssa Genschaw}
\thanks{M.~Badger was partially supported by NSF grant DMS 1650546.}
\subjclass[2020]{Primary 31B15. Secondary 28A75, 28A78, 35K05, 42B37.}
\keywords{heat equation, caloric measure, strong Markov property, absolute continuity, Bourgain's alternative, parabolic Hausdorff dimension, net measures, Frostman's lemma}
\address{Department of Mathematics\\ University of Connecticut\\ Storrs, CT 06269-3009}
\email{matthew.badger@uconn.edu}
\address{Mathematics Department\\ Milwaukee School of Engineering\\ Milwaukee, WI 53202}
\email{genschaw@msoe.edu}

\begin{abstract}We examine caloric measures $\omega$ on general domains in $\RR^{n+1}=\RR^n\times\RR$ (space $\times$ time) from the perspective of geometric measure theory. On one hand, we give a direct proof of a consequence of a theorem of Taylor and Watson (1985) that the lower parabolic Hausdorff dimension of $\omega$ is at least $n$ and $\omega\ll\Haus^n$. On the other hand, we prove that the upper parabolic Hausdorff dimension of $\omega$ is at most $n+2-\beta_n$, where $\beta_n>0$ depends only on $n$. Analogous bounds for harmonic measures were first shown by Nevanlinna (1934) and Bourgain (1987). Heuristically, we show that the \emph{density} of obstacles in a cube needed to make it unlikely that a Brownian motion started outside of the cube exits a domain near the center of the cube must be chosen according to the ambient dimension.

In the course of the proof, we give a caloric measure analogue of Bourgain's alternative: for any constants $0<\epsilon\ll_n \delta<1/2$ and closed set $E\subset\RR^{n+1}$, either (i) $E\cap Q$ has relatively large caloric measure in $Q\setminus E$ for every pole in $F$ or (ii) $E\cap Q_*$ has relatively small $\rho$-dimensional parabolic Hausdorff content for every $n<\rho\leq n+2$, where $Q$ is a cube, $F$ is a subcube of $Q$ aligned at the center of the top time-face, and $Q_*$ is a subcube of $Q$ that is close to, but separated backwards-in-time from $F$: $$Q=(-1/2,1/2)^n\times (-1,0),\quad  F=[-1/2+\delta,1/2-\delta]^n\times[-\epsilon^2,0),$$ $$\text{and}\quad Q_*=[-1/2+\delta,1/2-\delta]^n\times[-3\epsilon^2,-2\epsilon^2].$$ Further, we supply a version of the strong Markov property for caloric measures.\end{abstract}

\maketitle

\setcounter{tocdepth}{1}
\tableofcontents

\section{Introduction}

The caloric measures $\{\omega^{X,t}_\Omega\}_{(X,t)\in\Omega}$ are a family of Borel probability measures on $\RR^{n}\times\RR$, with support contained in $\partial\Omega$, which yield an integral representation of solutions to the Dirichlet problem for the heat equation, $$u_{X_1X_1}+\cdots+u_{X_nX_n}=u_t.$$ For any connected, open set $\Omega\subset\RR^n\times\RR$ (space $\times$ time) and suitable boundary data $f$, the function $$u(X,t)=\int f\,d\omega^{X,t}_\Omega$$ is a classical solution of the heat equation in the interior of $\Omega$ that agrees with $f$ along an appropriate subset of the boundary of $\Omega$ at each point of continuity of $f$. Probabilistically, $\omega^{X,t}(E)$ is the probability that the trace $\{(W_{X}(s),t-s):s\geq 0\}$ of a Brownian motion $W_X$ starting at $X$, \emph{sent into the past}, exits $\Omega$ through $E\subset\partial\Omega$. One should not only imagine smooth domains such as a time-cylinder over a ball, but also rough domains with corners, cusps, fractal, or space-filling boundaries. In the most general situation, the underlying open sets $\Omega_t=\{X:(X,t)\in\Omega\}\subset\RR^n$ in the space coordinates may vary in time. The precise definition of caloric measure and theorems about solution of the Dirichlet problem are the subject of heat potential theory; see \S\ref{sec:heat} for a brief overview and the monograph \cite{Watson} by Watson for a comprehensive introduction. There is a parallel theory of harmonic measures associated to the Dirichlet problem for Laplace's equation $u_{X_1X_1}+\dots+u_{X_nX_n}=0$ on domains in $\RR^n$, which is better developed and motivates questions about caloric measure (see the book review \cite{bishop-review} for a neat introduction and the reviewed book \cite{GM} for a longer one). However, the asymmetric nature of time makes the theory of temperatures more subtle. Still one expects that there exist deep connections between the geometry of a domain and its boundary and properties of the caloric measure. For a sample of results in this direction, see \cite{Kaufman-Wu-82}, \cite{Lewis-Murray}, \cite{Sweezy-ac}, \cite{HLN}, \cite{Engelstein-parabolic}, \cite{Genschaw-Hofmann}, \cite{MP-caloric}, and the references therein.

In this paper, we examine relationships between caloric measures on arbitrary domains and parabolic Hausdorff measures, an idea which stems from \cite{TW85}. More specifically, we provide estimates on the size of null sets and carrying sets for caloric measures, which are best described using the language of geometric measure theory. Very quickly, the \emph{lower Hausdorff dimension} of a measure $\mu$ on a metric space is the infimum of Hausdorff dimensions of sets with positive measure. The \emph{upper Hausdorff dimension} of $\mu$ is the infimum of Hausdorff dimensions of sets with full measure, i.e.~sets whose complement have zero measure. We review the definitions of parabolic Hausdorff measures and dimensions and related background in more detail in \S\ref{sec:gmt}. As usual, for two measures $\mu$ and $\nu$ on a measurable space $(\XX,\mathcal{M})$, we write $\mu\ll\nu$, if $E\in\mathcal{M}$ and $\nu(E)=0$ implies $\mu(E)=0$, and write $\mu\perp\nu$, if there exists $E\in\mathcal{M}$ with $\mu(\XX\setminus E)=0$ and $\nu(E)=0$. Here is a statement of the main theorem.

\begin{theorem}[dimension of caloric measure] \label{main} Let $n\geq 1$, let $\Omega\subset\RR^{n+1}$ be a domain, where we equip $\RR^{n+1}=\RR^n\times\RR$ with the parabolic distance \begin{equation}\label{e:p-d}\dist((X,t),(Y,s))=\max\left\{\left(\textstyle\sum_1^n|X_i-Y_i|^2\right)^{1/2},|t-s|^{1/2}\right\},\end{equation} let $(X,t)\in\Omega$, and let $\omega_\Omega^{X,t}$ denote the caloric measure of $\Omega$ with pole at $(X,t)$. Then: \begin{enumerate} \item The lower Hausdorff dimension of $\omega_\Omega^{X,t}$ is at least $n$. In fact, $\omega_\Omega^{X,t}\ll\Haus^n$, where $\Haus^\rho$ denotes the $\rho$-dimensional Hausdorff measure on $\RR^{n+1}$ with the parabolic distance.
\item The upper Hausdorff dimension of $\omega_\Omega^{X,t}$ is at most $n+2-\beta_n$, where $\beta_n>0$ depends only on $n$. That is, $\omega_\Omega^{X,t}\perp \Haus^{\rho}$ for all $\rho>n+2-\beta_n$.  \end{enumerate} For (ii), we assume that nearly every point in the essential boundary $\partial_e\Omega$ is regular for the Dirichlet problem for the heat equation (see \S\ref{sec:heat}).
\end{theorem}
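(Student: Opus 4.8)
\medskip
\noindent\textbf{Part (i): the lower bound and absolute continuity.} The plan is to reduce both assertions in (i) to the single pointwise estimate
\[
\omega_\Omega^{X,t}\bigl(B(P,r)\cap\partial\Omega\bigr)\ \lesssim\ \Bigl(\frac{r}{\dist((X,t),P)}\Bigr)^{n},\qquad P\in\partial\Omega,\quad 0<r\le\tfrac12\dist((X,t),P),
\]
where $B(P,r)$ is a ball in the parabolic metric \eqref{e:p-d}. Granting this, if $\Haus^{n}(E)=0$ then, because $(X,t)$ lies in the open set $\Omega$, we have $d_{0}:=\dist((X,t),\partial\Omega)>0$; covering $E\subset\partial\Omega$ by parabolic balls $B(P_{i},r_{i})$ with $r_{i}<d_{0}/2$ and $\sum_{i}r_{i}^{n}$ arbitrarily small, the estimate gives $\omega_\Omega^{X,t}(E)\le\sum_{i}\omega_\Omega^{X,t}(B(P_{i},r_{i})\cap\partial\Omega)\lesssim d_{0}^{-n}\sum_{i}r_{i}^{n}$, whence $\omega_\Omega^{X,t}(E)=0$. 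This is $\omega_\Omega^{X,t}\ll\Haus^{n}$, and since any set of Hausdorff dimension $<n$ is $\Haus^{n}$-null, the lower Hausdorff dimension of $\omega_\Omega^{X,t}$ is at least $n$. To prove the pointwise estimate I would combine the probabilistic interpretation of $\omega_\Omega^{X,t}$ with the strong Markov property for caloric measure supplied in this paper: the event that the space--time trace $\{(W_{X}(s),t-s):s\ge0\}$ exits $\Omega$ through $B(P,r)\cap\partial\Omega$ is contained in the event that this trace meets the parabolic ball $B(P,2r)$; being in $B(P,2r)$ forces the parameter $s$ into an interval of length $\lesssim r^{2}$, during which the spatial motion $W_{X}(s)$ — an $n$-dimensional Brownian motion whose variance at the relevant time is comparable to $\dist((X,t),P)^{2}$ — must stay within $\lesssim r$ of the spatial part of $P$, an event of probability $\lesssim r^{n}\dist((X,t),P)^{-n}$. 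The extra factor $r^{2}$ from the length of the admissible time window is exactly what upgrades the naive Newtonian bound to the sharp heat-kernel decay. (Alternatively the same bound follows from the parabolic maximum principle by comparing $\omega_\Omega^{X,t}(B(P,r)\cap\partial\Omega)$ with an explicit supertemperature that is $\gtrsim\mathbf{1}$ near $\overline{B(P,r)}$ and $\lesssim$ the heat kernel away from it.) No Dirichlet-regularity hypothesis is needed for (i).

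\medskip
\noindent\textbf{Part (ii): reduction to the caloric Bourgain alternative.} Fix $\rho>n+2-\beta_{n}$; it suffices to construct a Borel set $S\subset\RR^{n+1}$ with $\omega_\Omega^{X,t}(S)=\omega_\Omega^{X,t}(\RR^{n+1})$ and $\Haus^{\rho}(S)=0$, which gives $\omega_\Omega^{X,t}\perp\Haus^{\rho}$ and hence the upper dimension bound. The plan is a stopping-time decomposition into parabolic cubes driven by the caloric analogue of Bourgain's alternative announced in the abstract; by the parabolic scaling and translation invariance of the heat equation it is enough to analyze the normalized configuration $Q,F,Q_{*}$, in which $F$ and $Q_{*}$ are parabolic subcubes of side $\asymp\epsilon$ clustered near the center of the top face of the unit cube $Q$, with $Q_{*}$ a time-gap $\asymp\epsilon^{2}$ below $F$. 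Starting from a parabolic copy of $Q$ around a point of $\partial_{e}\Omega$, we run the alternative with $E=\partial_{e}\Omega$. In case (ii), $E\cap Q_{*}$ has $\rho$-dimensional parabolic Hausdorff content smaller than the maximal possible content $\asymp(\operatorname{side}Q_{*})^{\rho}$ of that small cube by a fixed factor $<1$, so $E\cap Q_{*}$ admits a cover whose $\rho$-content budget is a fixed fraction of that of $Q$; iterating through case-(ii) cubes produces a covering of $S$ of vanishing $\rho$-content, so $\Haus^{\rho}(S)=0$. In case (i), $E\cap Q$ already carries caloric measure at least $c$ from every pole in $F$, so by the strong Markov property the caloric measure (from the original pole) of the central target of $Q$ is at most $(1-c)$ times the caloric mass entering $Q$; consequently $\omega_\Omega^{X,t}$-a.e.\ point of $\partial_{e}\Omega$ falls in case (i) only finitely often and therefore lies in $S$. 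Tuning $\epsilon\ll_{n}\delta$ and tracking how the constant $c$ of case (i) and the content-improvement factor of case (ii) depend on $\rho,n,\epsilon,\delta$ then produces the explicit positive gap $\beta_{n}$. The Dirichlet-regularity hypothesis on $\partial_{e}\Omega$ enters precisely here, since it makes available the strong Markov property and the boundary behavior of caloric measure (full caloric measure of the regular points, continuity of solutions up to regular boundary points) in the form the stopping-time argument uses.

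\medskip
\noindent\textbf{Part (ii): proving the alternative.} The heart of the matter is the implication ``not (ii) $\Rightarrow$ (i)'': if $\Haus^{\rho}_{\infty}(E\cap Q_{*})\gtrsim(\operatorname{side}Q_{*})^{\rho}\asymp\epsilon^{\rho}$, then $\omega^{(Y,\sigma)}_{Q\setminus E}(E)\gtrsim_{n,\rho}1$ for every pole $(Y,\sigma)\in F$. I would argue as follows. Apply Frostman's lemma in the parabolic metric to obtain a probability measure $\mu$ supported on $E\cap Q_{*}$ with $\mu(B(P,r))\lesssim(r/\epsilon)^{\rho}$ for all $r>0$. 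Let $G_{Q}$ be the Green function of $Q$ for the heat equation and set $U=G_{Q}\mu$, the associated Green potential — a nonnegative supertemperature in $Q$ that vanishes on $\partial Q$. Two estimates are needed. First, $U\gtrsim_{n}\epsilon^{-n}$ on $F$: the parabolic boundary of $Q$ is at distance $\asymp1\gg\epsilon$ from $F\cup Q_{*}$, so $G_{Q}$ dominates one half of the free heat kernel on $F\times Q_{*}$, and the free heat kernel between a point of $F$ and a point of $Q_{*}$ — time separation $\asymp\epsilon^{2}$, spatial separation $\lesssim\epsilon$ — is comparable to $\epsilon^{-n}$ uniformly. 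Second, $U\lesssim_{n,\rho}\epsilon^{-n}$ on $E\cap Q_{*}$: one has $G_{Q}\lesssim\dist(\cdot,\cdot)^{-n}$ in the parabolic metric, and the Frostman growth bound with exponent $\rho>n$ makes the resulting parabolic Riesz-type potential bounded, of size $\asymp\epsilon^{-n}$ — this is exactly why the alternative is restricted to $n<\rho\le n+2$. Finally, running space--time Brownian motion from $(Y,\sigma)$ killed upon leaving $Q$, the second-moment (occupation-time) method with test measure $\mu$, together with the strong Markov property, yields
\[
\omega^{(Y,\sigma)}_{Q\setminus E}(E)\ \ge\ \frac{U(Y,\sigma)}{2\,\sup_{E\cap Q_{*}}U}\ \gtrsim_{n,\rho}\ 1,
\]
which is case (i). Passing rigorously from ``the killed motion accumulates positive expected occupation against $\mu$'' to ``the motion actually hits the closed set $E$ before leaving $Q$'' is a standard but delicate limiting point that again leans on the Dirichlet-regularity of $\partial_{e}\Omega$.

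\medskip
\noindent\textbf{Main obstacle.} I expect the principal difficulty to be the bookkeeping in the stopping-time iteration: combining the case-(i) geometric loss of caloric mass with the case-(ii) geometric gain in $\rho$-content into a single covering estimate that yields an \emph{explicit} positive $\beta_{n}$. This is the step requiring the most care, because it is the parabolic counterpart of the portion of Bourgain's original argument in which this paper identifies and corrects an error; heuristically, the density of obstacles needed to make it unlikely that space--time Brownian motion reaches the center of a cube must be calibrated to the ambient parabolic dimension $n+2$, and getting that calibration right — hence the precise relation $\epsilon\ll_{n}\delta$ and the resulting value of $\beta_{n}$ — is the crux.
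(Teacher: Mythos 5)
Your part (i) is essentially correct and follows the same route as the paper: the paper obtains a pointwise decay bound on the caloric measure of small parabolic balls (Lemmas~\ref{cylinder-estimate} and~\ref{ball-estimate}) by exhibiting an explicit translate/dilate of the fundamental temperature in the upper class $\mathfrak{U}^{\chi_E}_\Omega$, and then covers a $\Haus^n$-null set by small parabolic balls via Lemma~\ref{spherical-null-sets}. Your ``compare with an explicit supertemperature'' alternative is exactly this; your probabilistic version is morally the same, but you should note that no strong Markov property is involved here (the paper does not use Theorem~\ref{t:strong-markov} for part (i)) --- one simply bounds the exit probability by the unconditional probability that the space-time trace meets the parabolic ball, which is a free-Brownian-motion computation.

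For part (ii), the overall plan (caloric Bourgain alternative, stopping-time aggregation over scales, parameter calibration) matches the paper, but there are genuine gaps in the aggregation step. First, your claim that ``$\omega$-a.e.\ point falls in case (i) only finitely often and therefore lies in $S$'' is not the right mechanism: in Theorem~\ref{dim-lemma} the control is on the \emph{density}, not finiteness, of type-2 (measure-concentration) ancestors along the chain, balanced against type-1 (content-drop) ancestors, and one separately discards terminal cubes whose $\mu$-to-volume ratio is too large. Second, and more seriously, you have not supplied the device that makes the measure drop strong enough when case (i) of the alternative holds throughout a cube $P$ --- and this is precisely the fix to Bourgain's error that the paper provides. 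Naively concluding a $(1-c)$ drop per generation of children is too weak (this is exactly where Bourgain's $k\simeq m$ choice fails, cf.\ Remark~\ref{fix-bourgain}). Instead, the paper partitions $P$ into $\sim m^d$ concentric annular rings of $d$-th generation descendants, places a separating surface of $F_Q$-cubes in every ring, and iterates the strong Markov property through $k\gtrsim m^{n+2}(\log m)^2$ rings using Corollary~\ref{c-nested}; the new parameter $d=n+3$ raises the density of obstacle surfaces to $m^{d-1}$ per layer of children, calibrated to the ambient parabolic dimension $n+2$, so that $\delta^{1/2}+\eta^{-1/2}(1-\eta)^{k/2}<1$. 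You flag calibration to ambient dimension as the crux, but attribute it to $\epsilon,\delta$ in the alternative, whereas the actual new free parameter is the annular density $d$, which is independent of $\epsilon,\delta$.

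Two smaller remarks. Your proof of the alternative uses the Green potential $U=G_Q\mu$ and a second-moment/occupation-time argument; the paper instead uses the potential $u(X,t)=\int W(X-Y,t-s)\,d\mu(Y,s)$ against the \emph{free} heat kernel and compares deterministically via the lower class, subtracting $\|u\|_{L^\infty(\partial_n Q)}$ to cancel the boundary contribution --- a cleaner route that avoids Green function estimates for the parabolic cube. Finally, the Dirichlet-regularity hypothesis is not needed in the proof of the alternative itself: the passage from hitting $\spt\mu$ to hitting $E$ is handled by the time-truncation $E'=E\cap(\RR^n\times(-\infty,t_1])$, which separates $E'$ from $\partial_s Q$ and lets the maximum principle apply without any regularity assumption. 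Regularity enters only in Lemma~\ref{lemma2}, where it is needed to invoke the strong Markov property (Theorem~\ref{t:strong-markov} and Corollary~\ref{c-nested}).
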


\begin{remark}[{interpretation of Theorem \ref{main}}] Although the topological dimension is $n+1$, the Hausdorff dimension of $\RR^{n+1}=\RR^n\times\RR$ with respect to the parabolic distance is $n+2$. In particular, while nonempty relatively open sets $U\times\{t\}$ in the space coordinates have Hausdorff dimension $n$, lines parallel to the time axis have Hausdorff dimension 2. Theorem \ref{main}(i) gives a simple geometric test for a set to have zero caloric measure; namely, every set of parabolic Hausdorff dimension less than $n$ is a caloric null set. By contrast, Theorem \ref{main}(ii) implies that any boundary with sufficiently large parabolic Hausdorff dimension must contain a subset of strictly smaller Hausdorff dimension and full caloric measure. In effect, on domains with geometrically very large boundary, the interior of the domain is insulated from heat on a substantial portion of boundary. On the whole, the theorem indicates that the study of fine properties of caloric measure should focus on subsets of the boundary of parabolic Hausdorff dimension between $n$ and $n+2-\beta_n$. Unfortunately, we do not currently know the optimal value of $\beta_n$ for any $n$, but see \S\ref{sec:constants}.\end{remark}

A version of Theorem \ref{main}(i) for harmonic measure on planar domains first appears in the paper \cite{Nevanlinna}, where harmonic measure gets its name. In particular, using logarithmic capacity, Nevanlinna proves that for domains $\Omega\subset\RR^2$, harmonic measure $\omega^X_\Omega$ is absolutely continuous with respect to the Hausdorff measure $\Haus^\phi$ on $\RR^2$ with a logarithmic gauge $\phi$. For domains $\Omega\subset\RR^n$ in dimensions $n\geq 3$, the same method using Newtonian capacity tells us that harmonic measure $\omega^X_\Omega$ is absolutely continuous with respect to $\Haus^{n-2}$ on $\RR^n$. (Moreover, by elementary properties of the Riesz capacities, the harmonic measure $\omega^X_\Omega$ vanishes on any set of $\sigma$-finite $\Haus^{n-2}$ measure.) The appendix of \cite{Badger-thesis} records an alternative proof of this fact that bypasses capacity and provides the estimate $$\omega^X_\Omega(B(x,r)) \leq \frac{r^{n-2}}{\dist(X,\partial\Omega)^{n-2}}\quad\text{for all $x\in\partial\Omega$ and $r>0$.}$$ We follow this approach and give a proof of Theorem \ref{main}(i), with an analogous estimate \eqref{e:cylinder-3} for caloric measures $\omega^{X,t}_\Omega$ on parabolic balls, in \S\ref{sec:lower-bound}. Alternatively, Theorem \ref{main}(i) is a consequence of a deeper fact by Taylor and Watson \cite{TW85} that sets of zero $n$-dimensional parabolic Hausdorff measure $\Haus^n$ on $\RR^{n+1}$ have thermal capacity zero. All of the proofs described above ultimately rely on elementary comparisons with the fundamental solutions to the Laplace and heat equations. Theorem \ref{main}(i) is sharp, because the bottom boundary of any cylindrical domain $\Omega=\Omega_0\times(0,\infty)\subset\RR^n\times\RR$ has Hausdorff dimension $n$ and positive caloric measure.

Theorem \ref{main}(ii) is inspired by a corresponding result for harmonic measure due to Bourgain. For any $n\geq 2$, define \emph{Bourgain's constant} $b_n\in[0,1]$ to be the largest number such that the upper Hausdorff dimension of harmonic measure is at most $n-b_n$ for every domain $\Omega\subset\RR^n$. We remark that to bound $b_n$ from above, one need only estimate the dimension of harmonic measure from below on a particular domain. By contrast, in order to bound $b_n$ from below, one must estimate the dimension of harmonic measure from above \emph{uniformly} on arbitrary domains. In \cite{Bourgain}, Bourgain did this and showed that $b_n>0$ for all $n\geq 3$. It is now known that $b_2=1$ by a deep theorem of Jones and Wolff \cite{Jones-Wolff} (also see Makarov \cite{Makarov} and Wolff \cite{Wolff-sigma}) and $b_n<1$ for all $n\geq 3$ by an incredible example of Wolff \cite{Wolff} (also see \cite{LVV}). However, the exact value of Bourgain's constant $b_n$ has not been determined. Bishop \cite{Bishop-questions} has conjectured that $b_n=1/(n-1)$, but there has been only a little progress towards computing $b_n$ to date. A numerical experiment by Grebenkov \emph{et.~al} \cite{grebenkov-numerical-experiment} suggests $b_3\leq 0.995$ and the authors \cite{BG-decimals} recently proved that $b_3\geq 10^{-15}$. Additional results about the Hausdorff dimension of harmonic measure on special classes of domains can be found in \cite{wu-singularity,cantor-dim,kenigpreisstoro,Badger-nullsets,amt-example,azzam-corkscrews,azzam-dimension-drop}.

To prove Theorem \ref{main}(ii), in \S\ref{sec:upper-bound}, we follow the outline of Bourgain's original proof, which utilizes fundamental tools such as the maximum principle for harmonic functions and the strong Markov property for harmonic measure together with stopping time arguments from harmonic analysis. Each of these have natural analogues in the caloric setting, which we review in \S\ref{sec:heat}. In particular, we supply a version of the strong Markov property for caloric measure that incorporates the essential and singular boundary classification (see Theorem \ref{t:strong-markov}). We expect this tool to be useful for future applications. Of further independent interest, we supply a self-contained statement and proof (see Theorem \ref{dim-lemma}) of Bourgain's strategy for estimating the dimension of a Radon measure, which appears at the end of his demonstration that $b_3>0$. Another crucial ingredient in \cite{Bourgain} is an estimate, now commonly called \emph{Bourgain's alternative}, which implies that locally any closed set in $\RR^n$ always has relatively large harmonic measure or relatively small $\rho$-dimensional Hausdorff content $\Haus^\rho_\infty$ for all $\rho>n-2$. We determine and prove the correct analogue of Bourgain's alternative with caloric measure and parabolic Hausdorff content. Because of the form of the fundamental solution to the heat equation, a short dimension-dependent time skip between the pole of caloric measure and the part of the closed set being examined is required (see Lemma \ref{alternative}).

In \S\ref{sec:constants}, we examine the connection between Bourgain's constant $b_n$ for harmonic measure and the optimal constant $\beta_n$ in Theorem \ref{main}(ii). In particular, considering the behavior of caloric measure on cylindrical domains, we prove that $\beta_n\leq b_n$ for all $n\geq 1$.

As a final note, we wish to draw attention to some related work for other operators. Akman, Lewis, and Vogel \cite{ALV15} proved that an analogue of Bourgain's theorem is  true for $p$-harmonic measures associated to the \emph{$p$-Laplacian}, a degenerate elliptic operator on $\RR^n$, when $p\geq n$. In fact, they demonstrate that $p$-harmonic measure is carried by a set of $\sigma$-finite $\Haus^{n-1}$ measure whenever $p>n$. We expect that there should be analogues of Bourgain's theorem and Theorem \ref{main}(ii) for elliptic measures and parabolic measures associated to uniformly elliptic and parabolic divergence form operators, but leave this to future investigations.
On the other hand, Sweezy \cite{Sweezy-planar,Sweezy-space} has already proved by example that there exist elliptic measures on $\RR^n$ with Hausdorff dimension arbitrarily close to $n$ and parabolic measures in $\RR^{n+1}$ with Hausdorff dimension arbitrarily close to $n+2$. For adjacent work on absolute continuity of ``harmonic measure in higher codimensions", see the paper  \cite{DEM-magic} by David, Engelstein, and Mayboroda.

\emph{Acknowledgements.}  The first author would like to thank Max Engelstein for discussions about caloric measure and Raanan Schul for suggesting Question \ref{q:sharp}.

\section{Preliminaries I: geometric measure theory} \label{sec:gmt}

\subsection{Metric spaces} For general background on Hausdorff measures, see the books of Mattila \cite{Mattila} and Rogers \cite{Rogers}. For discussion of various definitions of the dimension of a measure, we refer the reader to \cite{measure-dim}. On a metric space $\XX$, we let $\diam E$ denote the \emph{diameter} of $E\subset \XX$. We let $B(x,r)$ and $U(x,r)$ denote the \emph{closed} and \emph{open} balls with center $x\in\XX$ and radius $r>0$.

\begin{definition} Let $\XX$ be a metric space and let $\rho\in[0,\infty)$. For all $\delta\in(0,\infty]$, we define the set functions $\Haus^\rho_\delta:\mathcal{P}(\XX)\rightarrow[0,\infty]$ by $$\Haus^\rho_\delta(E)=\inf\left\{\sum_1^\infty (\diam E_i)^\rho:E\subseteq\bigcup_1^\infty E_i, \diam E_i\leq \delta\right\}\quad\text{for all }E\subset\XX;$$ we call $\Haus^\rho_\infty$ the \emph{$\rho$-dimensional Hausdorff content} on $\XX$. We define the \emph{$\rho$-dimensional Hausdorff measure} $\Haus^\rho:\mathcal{P}(\XX)\rightarrow[0,\infty]$ on $\XX$ by $\Haus^\rho(E)=\lim_{\delta\downarrow 0} \Haus^\rho_\delta(E)$ for all $E\subset\XX$.
\end{definition}

\begin{proposition}[basic facts] Let $\XX$ be a metric space. \begin{itemize}
\item For all $\rho\in[0,\infty)$, the set functions $\Haus^\rho_\delta$ are outer measures on $\XX$ and the Hausdorff measure $\Haus^\rho$ is a Borel regular outer measure on $\XX$.
\item For all $\rho\in[0,\infty)$ and $E\subset\XX$, $\Haus^\rho_\infty(E)\leq (\diam E)^\rho$.
\item For all $\rho\in[0,\infty)$ and $E\subset\XX$, $\Haus^\rho(E)=0$ if and only if $\Haus^\rho_\infty(E)=0$.
\item For all $0\leq \rho_1<\rho_2<\infty$ and $E\subset\XX$, if $\Haus^{\rho_1}(E)<\infty$, then $\Haus^{\rho_2}(E)=0$.\end{itemize}\end{proposition}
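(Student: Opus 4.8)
The plan is to verify each of the four items directly from the definitions; all are classical facts about Hausdorff (outer) measures, and essentially all of the work sits in the first item.

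First I would treat the set functions $\Haus^\rho_\delta$. Monotonicity and $\Haus^\rho_\delta(\emptyset)=0$ are immediate, and countable subadditivity is the usual diagonalization: given $E^{(1)},E^{(2)},\dots\subset\XX$ and $\epsilon>0$, choose a $\delta$-cover $\{E^{(j)}_i\}_i$ of each $E^{(j)}$ with $\sum_i(\diam E^{(j)}_i)^\rho\le\Haus^\rho_\delta(E^{(j)})+\epsilon 2^{-j}$, and note that $\{E^{(j)}_i\}_{i,j}$ is a $\delta$-cover of $\bigcup_j E^{(j)}$. Since $\delta\mapsto\Haus^\rho_\delta(E)$ is nonincreasing, $\Haus^\rho(E)=\sup_{\delta>0}\Haus^\rho_\delta(E)$, and subadditivity passes to this supremum: for each $\delta>0$, $\Haus^\rho_\delta(\bigcup_j E^{(j)})\le\sum_j\Haus^\rho_\delta(E^{(j)})\le\sum_j\Haus^\rho(E^{(j)})$, and taking $\sup_\delta$ on the left gives the claim for $\Haus^\rho$. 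Next I would check the Carath\'eodory criterion: if $\dist(A,B)>0$ and $\delta<\dist(A,B)$, then no member of a $\delta$-cover of $A\cup B$ can meet both $A$ and $B$, so such a cover splits into a $\delta$-cover of $A$ and a $\delta$-cover of $B$, giving $\Haus^\rho_\delta(A)+\Haus^\rho_\delta(B)\le\Haus^\rho_\delta(A\cup B)$; letting $\delta\downarrow0$ and combining with subadditivity yields $\Haus^\rho(A\cup B)=\Haus^\rho(A)+\Haus^\rho(B)$, so $\Haus^\rho$ is a metric outer measure and hence, by Carath\'eodory's theorem, every Borel set is $\Haus^\rho$-measurable. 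For Borel regularity I would use $\diam\cl{E}=\diam E$ to reduce to covers by closed sets, then for $E$ with $\Haus^\rho(E)<\infty$ pick a closed $(1/k)$-cover $\{F^{(k)}_i\}_i$ of $E$ with $\sum_i(\diam F^{(k)}_i)^\rho\le\Haus^\rho_{1/k}(E)+1/k$ and set $G=\bigcap_k\bigcup_i F^{(k)}_i$; then $G$ is Borel, $E\subseteq G$, and $\Haus^\rho_{1/k}(G)\le\Haus^\rho_{1/k}(E)+1/k$ for every $k$, so $\Haus^\rho(G)\le\Haus^\rho(E)$, and the reverse inequality holds by monotonicity. When $\Haus^\rho(E)=\infty$ take $G=\XX$. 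The same arguments go through verbatim for $\logHaus_\delta$ and $\logHaus$ with the relevant gauge function playing the role of $t\mapsto t^\rho$; the only property of the gauge that is used is monotonicity, which is precisely what makes the reduction to closed covers valid.

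The other three items are quick. For $\Haus^\rho_\infty(E)\le(\diam E)^\rho$, the single-set family $\{E\}$ is an admissible cover when $\delta=\infty$, and the bound is trivial if $\diam E=\infty$. For the equivalence in the third item, $\Haus^\rho_\infty\le\Haus^\rho_\delta\le\Haus^\rho$ gives one direction; conversely, if $\rho>0$ and $\Haus^\rho_\infty(E)=0$, then for each $\epsilon>0$ a cover $\{E_i\}$ with $\sum_i(\diam E_i)^\rho<\epsilon$ necessarily has $\diam E_i<\epsilon^{1/\rho}$ for all $i$, so $\Haus^\rho_{\epsilon^{1/\rho}}(E)<\epsilon$, and letting $\epsilon\downarrow0$ gives $\Haus^\rho(E)=0$ (the case $\rho=0$ is trivial). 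For the fourth item, for any $\delta>0$ and any $\delta$-cover $\{E_i\}$ of $E$ one has $\sum_i(\diam E_i)^{\rho_2}\le\delta^{\rho_2-\rho_1}\sum_i(\diam E_i)^{\rho_1}$, hence $\Haus^{\rho_2}_\delta(E)\le\delta^{\rho_2-\rho_1}\Haus^{\rho_1}_\delta(E)\le\delta^{\rho_2-\rho_1}\Haus^{\rho_1}(E)$; since $\Haus^{\rho_1}(E)<\infty$ and $\delta^{\rho_2-\rho_1}\to0$ as $\delta\downarrow0$, we conclude $\Haus^{\rho_2}(E)=0$. I do not expect a genuine obstacle anywhere here; the only step warranting real care is the first item — specifically the metric-outer-measure property and the $G_\delta$-construction for Borel regularity — and there the standard argument suffices, so the task is to present it cleanly rather than to discover it.
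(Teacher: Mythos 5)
Your proof is correct and uses the standard textbook arguments; the paper itself does not prove this proposition but simply labels it ``basic facts'' and refers the reader to Mattila and Rogers for background, so there is no competing approach to compare against. Everything you write — the diagonalization for countable subadditivity, the Carath\'eodory criterion via covers of diameter less than $\dist(A,B)$, the $F_{\sigma\delta}$ hull built from closed $(1/k)$-covers for Borel regularity, and the three short calculations for the remaining items — is exactly the verification the paper implicitly defers to the literature.
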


\begin{remark}In the definition of $\Haus^\rho_\delta$, the coverings of $E$ are by \emph{arbitrary subsets} of $\XX$ of diameter at most $\delta$. The outer measures $\Haus^\rho_\delta$ are typically not additive on Borel sets.\end{remark}

\begin{definition} Let $\XX$ be a metric space and let $E\subset\XX$. The \emph{Hausdorff dimension} of $E$, denoted $\dim_H E$, is the unique number in $[0,\infty]$ determined as follows. \begin{itemize}
\item If $\Haus^\rho(E)=0$ for all $\rho>0$, then $\dim_H E=0$.
\item If there exists $\sigma\in(0,\infty)$ such that $\Haus^{\rho_1}(E)=\infty$ for all $\rho_1<\sigma$ and $\Haus^{\rho_2}(E)=0$ for all $\rho_2>\sigma$, then $\dim_H E=\sigma$.
\item If $\Haus^\rho(E)=\infty$ for all $\rho<\infty$, then $\dim_H E=\infty$.
\end{itemize}Let $\mu$ be a Borel measure on $\XX$. The \emph{lower Hausdorff dimension} of $\mu$ is defined to be $$\underline{\dim}_H\,\mu = \inf\{\dim_H E: E\subset \XX\text{ is Borel, } \mu(E)>0\}.$$ The \emph{upper Hausdorff dimension} of $\mu$ is defined to be $$\overline{\dim}_H\,\mu = \inf\{\dim_H E : E\subset \XX\text{ is Borel, } \mu(\XX\setminus E)=0\}.$$
\end{definition}

\begin{remark}\label{stable} Both the Hausdorff dimension of a set and upper Hausdorff dimension of a measure are \emph{countably stable}. This means that if $\XX=\bigcup_1^\infty E_i$, then $\dim_H \XX=\sup_i \dim_H E$ and $\overline{\dim}_H\, \mu=\sup_i\overline{\dim}_H\,\mu\res E_i$ for every Borel measure $\mu$ on $\XX$.

Here and below $\mu\res E$ denotes the (outer) measure defined by $\mu\res E(F)=\mu(E\cap F)$ for all $F$. Recall that if $\mu$ is a metric outer measure, then $\mu\res E$ is a metric outer measure for any set $E$; if $\mu$ is a Borel regular outer measure, then $\mu\res E$ is a Borel regular outer measure for any Borel set $E$. See e.g.~\cite{EG} or \cite{Rogers}.\end{remark}

In practice, when working with Hausdorff measures and contents, it is convenient to be able to work with coverings by metric balls or ``cubes'' instead of arbitrary sets.

\begin{lemma}[spherical coverings for null sets] \label{spherical-null-sets} Let $\XX$ be a metric space, let $\rho\in[0,\infty)$, and let $E\subset\XX$. If $\Haus^\rho(E)=0$, then for every $\delta>0$, there exists a sequence $(U(x_i,r_i))_1^\infty$ of open balls in $\XX$ with centers $x_i\in E$ and radii $0<r_i<\delta$ such that $E\subset \bigcup_1^\infty U(x_i,r_i)$ and $\sum_1^\infty (\diam U(x_i,r_i))^\rho <\delta$.\end{lemma}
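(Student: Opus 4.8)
The plan is to start from the definition of $\Haus^\rho(E) = 0$, which by the second bullet of the basic facts proposition is equivalent to $\Haus^\rho_\infty(E) = 0$, and then upgrade an arbitrary covering to a covering by open balls centered on $E$. First I would fix $\delta > 0$. Since $\Haus^\rho_\delta(E) = 0$ (this uses $\Haus^\rho(E) = 0$, so in particular $\Haus^\rho_\delta(E) = 0$ for the given $\delta$), there is a covering $E \subseteq \bigcup_1^\infty E_i$ with $\diam E_i \leq \delta$ and $\sum_1^\infty (\diam E_i)^\rho < \eta$, where $\eta > 0$ is a small number to be chosen below. Without loss of generality each $E_i$ meets $E$ (discard those that do not), so pick $x_i \in E \cap E_i$. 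The key geometric observation is that $E_i \subseteq U(x_i, 2\diam E_i)$ — indeed every point of $E_i$ is within $\diam E_i$ of $x_i$, hence strictly within $2\diam E_i$ — so setting $r_i = 2\diam E_i$ gives open balls with centers in $E$ covering $E$. (If some $E_i$ is a single point, take any small positive radius instead; and we may assume all $E_i$ are nonempty.)

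Next I would check the two quantitative requirements. For the radii: $r_i = 2\diam E_i \leq 2\delta$, which is not quite $r_i < \delta$, so I would instead run the argument with the covering chosen to have $\diam E_i \leq \delta/3$ from the start (legitimate, since $\Haus^\rho_{\delta/3}(E) = 0$ too), giving $0 < r_i \leq (2/3)\delta < \delta$. For the sum: $\diam U(x_i, r_i) \leq 2r_i = 4\diam E_i$, so
$$\sum_1^\infty (\diam U(x_i, r_i))^\rho \leq 4^\rho \sum_1^\infty (\diam E_i)^\rho < 4^\rho \eta,$$
and choosing $\eta = 4^{-\rho}\delta$ at the outset makes this less than $\delta$. (When $\rho = 0$ the content inequality degenerates — every nonempty set has $\Haus^0_\infty$ value $1$ — but then $\Haus^0(E) = 0$ forces $E = \emptyset$ and there is nothing to prove, so I would dispose of that case separately, or simply note the $\rho = 0$ statement is vacuous.)

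There is no real obstacle here; the lemma is a routine ``upgrade the covering'' argument and the only things to be careful about are (a) passing to a covering of small enough mesh so that the factor-of-$2$ blowup in radius still lands below $\delta$, (b) replacing an arbitrary covering set by a concentric ball with center forced to lie in $E$, which costs a dimension-dependent constant $4^\rho$ in the content sum that is absorbed by shrinking the target threshold, and (c) the trivial handling of degenerate sets (empty $E_i$, singletons, the $\rho = 0$ case). The mild subtlety worth stating explicitly is the distinction between $\diam U(x_i, r_i)$ and $2r_i$: in a general metric space a ball of radius $r$ can have diameter anywhere in $[0, 2r]$, and we only need the upper bound $\diam U(x_i, r_i) \leq 2r_i$, which always holds by the triangle inequality.
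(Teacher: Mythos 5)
Your argument follows the same route the paper takes: shrink the mesh of an arbitrary covering, discard sets missing $E$, pick $x_i \in E\cap E_i$, replace each $E_i$ by the concentric open ball $U(x_i,2\diam E_i)$, and absorb the resulting factor $4^\rho$ in the content sum by choosing the initial budget $\eta$ small enough. The one place you wave your hands and the paper does not is the treatment of zero-diameter $E_i$. Writing ``take any small positive radius instead'' does not, by itself, control $\sum_j (\diam U(x_j,r_j))^\rho$: there may be infinitely many singletons, and your budget $\eta = 4^{-\rho}\delta$ is entirely spent on the positive-diameter sets, leaving no slack for them. The paper handles this by reserving half the budget (it requires $\sum (\diam E_i)^\rho < \delta/2^{1+2\rho}$, so the positive-diameter balls contribute less than $\delta/2$) and then assigning the singletons the explicit, geometrically summable radii $r_j = \epsilon/2^{1+(j+1)/\rho}$ with $\epsilon = \min\{\delta,\delta^{1/\rho}\}$, so that their contribution is also at most $\delta/2$. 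You should make the same move: split the budget and prescribe a summable sequence of radii for the degenerate $E_i$, rather than leaving ``small'' unquantified. (Your explicit disposal of the $\rho=0$ case via $\Haus^0(E)=0 \Rightarrow E=\emptyset$ is a nice touch the paper leaves implicit; indeed $\delta^{1/\rho}$ and $2^{(j+1)/\rho}$ are undefined at $\rho=0$, so the paper's formulas implicitly assume $\rho>0$.)
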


\begin{proof}Suppose that $\Haus^\rho(E)=0$ and let $\delta>0$ be given. Assign $\epsilon=\min\{\delta,\delta^{1/\rho}\}$. Then $0\leq \Haus^\rho_{\epsilon/4}(E)\leq \Haus^\rho(E)=0$, as well. Since $\Haus^\rho_{\epsilon/4}(E)=0$, there exist sets $E_1,E_2,\dots$ with $\diam E_i\leq \epsilon/4$ such that $E\subset\bigcup_1^\infty E_i$ and $\sum_1^\infty (\diam E_i)^\rho < \delta/2^{1+2\rho}$. Delete any sets $E_i$ that do not intersect $E$ and relabel. For each $i\geq 1$, pick $x_i\in E_i\cap E$ and either set $r_i=2\diam E_i$, if $\diam E_i>0$, or $r_i=\epsilon/2^{1+(i+1)/\rho}$, if $\diam E_i=0$. Note that $r_i<\delta$ for each $i$, since $2\diam E_i \leq \epsilon/2< \delta$ and $\epsilon/2^{1+(i+1)/\rho}<\epsilon/2<\delta$. Further, since each set $E_i\subset U(x_i,r_i)$, we have $E\subset\bigcup_1^\infty U(x_i,r_i)$. Finally, write $I=\{i\geq 1: \diam E_i>0\}$ and $J=\{j\geq 1: \diam E_j=0\}$. Then, by our choices above, \begin{align*}\sum_1^\infty (\diam U(x_i,r_i))^\rho &\leq \sum_{i\in I} 4^\rho (\diam E_i)^\rho + \sum_{j\in J} \frac{\epsilon^\rho}{2^{j+1}}\\ & \leq 4^\rho \sum_1^\infty (\diam E_i)^\rho + \sum_2^\infty \frac{\delta}{2^j}<\frac{\delta}{2}+\frac{\delta}{2}=\delta. \qedhere\end{align*}\end{proof}

\subsection{Parabolic space} \label{ss:cubes}

Let us specialize the ambient metric space to our primary setting. Let $\RR^{n+1}=\RR^n\times\RR$ be equipped with the parabolic distance \eqref{e:p-d}. For each integer $m\geq 2$, we define the system $\Delta^m(\RR^{n+1})$ of \emph{half-open parabolic $m$-adic cubes} to be all sets $Q$ of the form $$Q=\left[\frac{j_1}{m^k},\frac{j_1+1}{m^k}\right)
\times \cdots \times \left[\frac{j_n}{m^k},\frac{j_{n}+1}{m^k}\right)\times \left[\frac{j_{n+1}}{m^{2k}},\frac{j_{n+1}+1}{m^{2k}}\right)\quad(k,j_1,\dots,j_{n},j_{n+1}\in\ZZ);$$ we say that $Q$ belongs to \emph{generation $k$} of $\Delta^m(\RR^{n+1})$ and has \emph{side length} $\side Q=m^{-k}$ and \emph{volume} $\vol Q=m^{-k(n+2)}$. Note that $\diam Q=\sqrt{n} \side Q$ for every $Q\in\Delta^m(\RR^{n+1})$, because we view $\RR^{n+1}$ with the parabolic distance. See Figure \ref{fig:cubes}.

\begin{figure}\begin{center}\includegraphics[width=.3\textwidth]{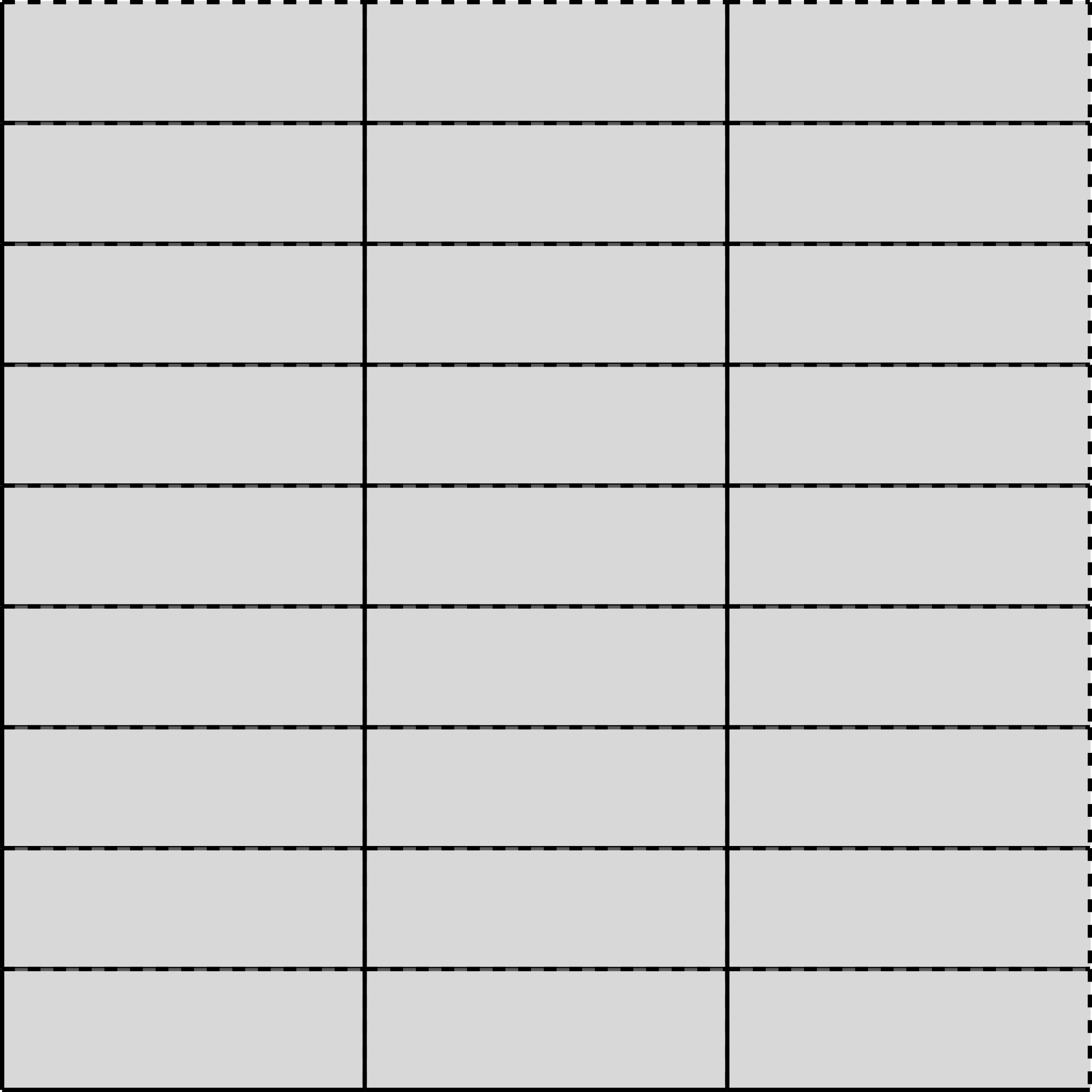}\end{center}
\caption{A parabolic triadic square ($m=3$) in $\RR^2$ of side length 1 and its 27 children of side length 1/3.} \label{fig:cubes}
\end{figure}

The cubes in each generation of $\Delta^m(\RR^{n+1})$ partition $\RR^{n+1}$ and admit an apparent \emph{parent-child hierarchy} or \emph{tree structure}. Every cube $Q\in\Delta^m(\RR^{n+1})$ of generation $k$ is contained in a unique cube $Q^\uparrow\in\Delta^m(\RR^{n+1})$ of generation $k-1$; we call $Q^\uparrow$ the \emph{parent} of $Q$ and we say that $Q$ is a \emph{child} of $Q^\uparrow$. Extending this metaphor, the \emph{ancestors} of a cube include its parent, the parent of its parent, and so on; the \emph{descendents} of a cube include its children, the children of its children, and so on. Note that for every $Q\in\Delta^m(\RR^{n+1})$, $$\#\Child(Q)=m^{n+2}\quad\text{and}\quad \vol Q=\sum_{R\in\Child(Q)}\vol R,$$ where $\Child(Q)$ denotes the set of all children of $Q$. Another observation is that for each cube $Q_0\in\Delta^m(\RR^{n+1})$, there are precisely $3^{n+1}$ cubes $Q\in\Delta^m(\RR^{n+1})$ such that $\side Q=\side Q_0$ and $\overline{Q}\cap\overline{Q_0}\neq\emptyset$ and this number of cubes is independent of $m$.

When discussing the relative position of cubes in a tree, we shall say that an ancestor of a cube is ``above'' the cube and a descendent of a cube is ``below'' the cube.

We now define a family of Hausdorff type measures adapted to the system $\Delta^m(\RR^{n+1})$ of parabolic $m$-adic cubes. Such measures are sometimes called \emph{net measures}; see e.g.~\cite[Ch.~2, \S7]{Rogers}.
The net measures $\Net^\rho$ and net contents $\Net^\rho_\infty$ depend on a choice of a system of cubes, but we suppress this dependence from the notation for the sake of readability. We choose the letter ``$\Net$'' to suggest ``$m$-adic''.

\begin{definition}Fix $\Delta=\Delta^m(\RR^{n+1})$ and let $\rho\in[0,\infty)$. For all $\delta\in(0,\infty]$, we define the set functions $\Net^\rho_\delta:\mathcal{P}(\RR^{n+1})\rightarrow[0,\infty]$ by $$\Net^\rho_\delta(E)=\inf\left\{\sum_1^\infty (\side E_i)^\rho:E\subseteq\bigcup_1^\infty E_i, \side E_i\leq \delta, E_i\in\Delta\right\}\quad\text{for all }E\subset\RR^{n+1};$$ we call $\Net^\rho_\infty$ the \emph{$\rho$-dimensional parabolic net content} on $\RR^{n+1}$. We define the \emph{$\rho$-dimensional parabolic net measure} $\Net^\rho:\mathcal{P}(\RR^{n+1})\rightarrow[0,\infty]$ on $\RR^{n+1}$ by $\Net^\rho(E)=\lim_{\delta\downarrow 0} \Net^\rho_\delta(E)$ for all $E\subset\RR^{n+1}$.\end{definition}

\begin{remark}\label{hausdorff-net} The net contents $\Net^\rho_\infty$ are outer measures and the net measures $\Net^\rho$ are Borel regular outer measures on $\RR^{n+1}$. Unlike the Hausdorff measures and contents, the net measures and contents on $\RR^{n+1}$ are not translation invariant. Nevertheless, for all $n\geq 1$ and $\rho>0$, we have $\Haus^\rho\approx \Net^\rho$ and  $\Haus^\rho_\infty\approx \Net^\rho_\infty$, with implicit constants in one direction depending only on $n$ and in the other direction only on $n$ and $m$. More precisely, $$\Haus^\rho_{\sqrt{n} \delta} \leq \sqrt{n}^\rho \Net^\rho_\delta \leq \sqrt{n}^{n+2} \Net^\rho_\delta,$$ because $\diam Q=\sqrt{n} \side Q$ for every parabolic $m$-cube and $\Haus^\rho_{\sqrt{n}\delta}$ is defined by taking the infimum over a larger class of sets. (Also, all quantities vanish whenever $\rho>n+2$.) Conversely, whenever $\delta=\infty$ or $\delta=m^{-k_0}$ for some $k_0\in\ZZ$, $$\Net^\rho_\delta \leq 3^{n+1}m^\rho\, \Haus^\rho_{\delta}\leq 3^{n+1}m^{n+2}\,\Haus^\rho_\delta.$$ To see this, suppose $E\subset\RR^{n+1}$ is a nonempty set with $m^{-(k+1)}<\diam E\leq m^{-k}$ for some $k\in\ZZ$. Pick any $Q_E\in\Delta$ with $\side Q_E= m^{-k}$ such that $Q_E\cap E\neq\emptyset$. Then $E$ is covered by the $3^{n+1}$ half-open cubes $Q\in\Delta$ with $\side Q=m^{-k}\leq m\diam E$ such that $\overline{Q}\cap\overline{Q_E}\neq\emptyset$. Considering possible coverings for a cube centered at the origin of side length $m^{1/2}$, i.e.~ $$E=\left[-\textstyle\frac{1}{2}m^{1/2},\frac{1}{2}m^{1/2}\right)^n\times\left[-\textstyle\frac{1}{2}m^{-1/4},\frac{1}{2}m^{1/4}\right),$$ shows that some dependence on $m$ in the comparison $\Net^{\rho}_\infty\lesssim \Haus^{\rho}_\infty$ is unavoidable (unless we restrict to $n+2-\rho\ll 1$ depending on $m$).
\end{remark}

The following theorem is usually stated with $m=2$ and the bound $\mu(E)\geq c(n)\,\Haus^\rho_\infty(E)$, obscuring the dependence on $m$ in \eqref{frostman-A}. However, below it is convenient to take $m\gg 2$ and use the sharper bound $\mu(E)\geq c(n)\,\Net^\rho_\infty(E)$; see Lemmas \ref{alternative} and \ref{lemma2}. We refer the reader who may be unfamiliar with the definition of Souslin sets to \cite{Rogers}.

\begin{theorem}[Frostman's lemma]\label{frostman} Fix $\Delta=\Delta^m(\RR^{n+1})$. Let $E\subset\RR^{n+1}$ be a Souslin set (e.g.~any Borel set). If $E$ is bounded and $\Haus^\rho(E)>0$ for some $\rho>0$, then there exists a compact set $K\subset E$ and a finite Borel measure $\mu$ on $\RR^{n+1}$ with support in $K$ such that \begin{equation}\label{frostman-Q}\mu(Q)\leq (\side Q)^\rho\quad\text{for all $Q\in\Delta$},\end{equation} \begin{equation}\label{frostman-c}\mu(E)=\mu(K)\geq c\,\Net^\rho_\infty(E)\geq c\sqrt{n}^{-(n+2)}\,\Haus^\rho_\infty(E),\end{equation}  where $0<c<1$ is a constant depending only on $n$ (and not on $m$!). Furthermore, \begin{equation}\label{frostman-A}\mu(A) \leq 3^{n+1}m^{\rho} (\diam A)^\rho\quad\text{for all sets $A\subset\RR^{n+1}$.} \end{equation} \end{theorem}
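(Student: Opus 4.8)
The plan is to run the classical mass‑distribution argument of Frostman on the $m$‑adic grid $\Delta=\Delta^m(\RR^{n+1})$, after first reducing to a compact set by capacitability. We may assume $0<\rho\le n+2$, since for $\rho>n+2$ the hypothesis $\Haus^\rho(E)>0$ is vacuous. From $\Haus^\rho(E)>0$ we get $\Haus^\rho_\infty(E)>0$, hence $\Net^\rho_\infty(E)>0$ by Remark \ref{hausdorff-net}. The net content is a Choquet capacity on $\RR^{n+1}$ (monotone, continuous from below on arbitrary sets and from above along decreasing sequences of compacta --- the first property being the delicate ``increasing sets'' fact for Hausdorff‑type set functions; see \cite{Rogers}), so Souslin sets are capacitable and we may fix a compact $K\subseteq E$ with $\Net^\rho_\infty(K)\ge\tfrac12\Net^\rho_\infty(E)>0$. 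It then suffices to construct a finite Borel measure $\nu$ with $\spt\nu\subseteq K$, with $\nu(Q)\le 3^{n+1}(\side Q)^\rho$ for all $Q\in\Delta$, and with $\nu(\RR^{n+1})\ge\Net^\rho_\infty(K)$: the measure $\mu:=3^{-(n+1)}\nu$ then satisfies \eqref{frostman-Q}, satisfies $\mu(E)=\mu(K)=\nu(\RR^{n+1})/3^{n+1}\ge\tfrac12 3^{-(n+1)}\Net^\rho_\infty(E)$, which is \eqref{frostman-c} with $c=\tfrac12 3^{-(n+1)}$ after invoking $\Net^\rho_\infty\ge\sqrt n^{-(n+2)}\Haus^\rho_\infty$, and \eqref{frostman-A} follows by a short covering argument.

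To build $\nu$, I would construct finite measures $\mu_N$ ($N\ge 0$) by a top‑down sweep. Start with the measure placing mass $(\side R)^\rho$, spread uniformly, on each generation‑$N$ cube $R$ meeting $K$ and nothing elsewhere; then, processing generations $N-1,N-2,\ldots$ in order, whenever the running measure of a cube $Q$ exceeds $(\side Q)^\rho$, multiply the measure on $Q$ by the factor $(\side Q)^\rho/(\text{running mass of }Q)$. Since $K$ is bounded, no sufficiently large cube ever triggers a rescaling, so after finitely many steps the process stabilizes to a measure $\mu_N$. Three facts need checking. First, $\mu_N(Q)\le(\side Q)^\rho$ for every $Q\in\Delta$: for $Q$ of generation $\le N$ this holds right after $Q$'s generation is processed and is preserved thereafter, since later steps only shrink masses; for $Q$ of generation $>N$, $\mu_N$ restricted to the generation‑$N$ cube $R\supseteq Q$ is a constant $\le 1$ times a uniform measure of total mass $(\side R)^\rho$, whence $\mu_N(Q)\le(\side R)^\rho(\side Q/\side R)^{n+2}\le(\side R)^\rho(\side Q/\side R)^{\rho}=(\side Q)^\rho$, the last inequality using $\rho\le n+2$. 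Second, $\mu_N(\RR^{n+1})\ge\Net^\rho_\infty(K)$: for each generation‑$N$ cube $R$ meeting $K$, let $Q_R\supseteq R$ be the cube of smallest generation at which a rescaling was triggered while building $\mu_N$ (or $Q_R=R$ if none was); then $\mu_N(Q_R)=(\side Q_R)^\rho$, because no rescaling occurs at any ancestor of $Q_R$; the maximal cubes among $\{Q_R\}$ are pairwise disjoint and cover $K$, so $\mu_N(\RR^{n+1})\ge\sum(\side Q_R)^\rho\ge\Net^\rho_\infty(K)$. Third, $\spt\mu_N$ lies in the $\sqrt n\,m^{-N}$‑neighborhood of $K$, and $\mu_N(\RR^{n+1})$ is at most the number of generation‑$0$ cubes meeting $K$, uniformly in $N$.

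By the third fact all $\mu_N$ are supported in a fixed compact set with uniformly bounded mass, so a subsequence $(\mu_{N_k})$ converges weakly‑$*$ to a finite Borel measure $\nu$; the support bound forces $\spt\nu\subseteq K$, and the portmanteau theorem applied on a fixed compact set containing every support gives $\nu(\RR^{n+1})\ge\limsup_k\mu_{N_k}(\RR^{n+1})\ge\Net^\rho_\infty(K)$. For the cube bound one cannot pass $\mu_N(Q)\le(\side Q)^\rho$ directly to the limit, because the half‑open $Q$ is neither open nor closed and $\nu$ may load the grid hyperplanes; instead, fix for each $Q\in\Delta$ a small open set $U\supseteq\overline Q$ covered by the at most $3^{n+1}$ cubes of $Q$'s generation meeting $\overline Q$, and use $\nu(U)\le\liminf_k\mu_{N_k}(U)\le 3^{n+1}(\side Q)^\rho$ to conclude $\nu(\overline Q)\le\nu(U)\le 3^{n+1}(\side Q)^\rho$. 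This produces the required $\nu$. Finally, \eqref{frostman-A} for $\mu=3^{-(n+1)}\nu$: if $\diam A=0$ then $\mu(\{x\})\le(\side Q)^\rho$ for the nested $m$‑adic cubes $Q\ni x$ of every generation, so $\mu(\{x\})=0$; if $d:=\diam A>0$, choose $k\in\ZZ$ with $m^{-k}\ge d>m^{-(k+1)}$, note $A$ meets at most $3^{n+1}$ generation‑$k$ cubes, and conclude $\mu(A)\le 3^{n+1}(m^{-k})^\rho\le 3^{n+1}(md)^\rho=3^{n+1}m^\rho(\diam A)^\rho$.

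The two places demanding care are the limits. The capacitability reduction rests on the increasing‑sets property of $\Net^\rho_\infty$, the one genuinely nontrivial ingredient, which is available from the general theory of Hausdorff‑type measures. The other subtlety --- mass of the weak‑$*$ limit escaping onto the grid hyperplanes --- is exactly what prevents the constant in \eqref{frostman-c} from being $1$ and costs the factor $3^{n+1}$; the essential point, which the $m$‑adic construction makes transparent, is that this factor and the resulting $c$ depend only on $n$ and not on $m$, as the statement requires. Everything else --- verifying $\mu_N(Q)\le(\side Q)^\rho$, where $\rho\le n+2$ enters, and the bookkeeping behind $\mu_N(\RR^{n+1})\ge\Net^\rho_\infty(K)$ --- is routine but should be written out carefully.
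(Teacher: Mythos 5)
Your proof is correct and follows the same strategy the paper invokes by citation: reduce to a compact $K\subset E$ via Choquet capacitability of $\Net^\rho_\infty$ (the Rogers reference), then run Frostman's top-down mass-distribution construction on the parabolic $m$-adic grid and pass to a weak-$*$ limit; the paper compresses this to a pointer to Mattila, pp.~112ff, with the same $3^{n+1}$-cube patch for sets not contained in a single $Q\in\Delta$. The one place you deviate is in the limit step: rather than extracting a diagonal subsequence along which $\mu_{N}(R)$ converges for every $R\in\Delta$ (which yields a consistent premeasure on the semiring of half-open cubes and hence $\nu(Q)\leq(\side Q)^\rho$ with constant $1$ outright, as Mattila's argument does), you take a raw weak-$*$ limit and pay the extra $3^{n+1}$ to control $\nu(\overline Q)$, renormalizing afterward. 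Both routes land on \eqref{frostman-Q} with constant $1$ and a $c$ depending only on $n$; yours is slightly lossier but makes the half-open-boundary subtlety explicit, which is a reasonable trade.
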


\begin{proof}The proof for compact sets in $\RR^n$ with the Euclidean distance using dyadic cubes starting on \cite[p.~112]{Mattila} transfers to the setting of $\RR^{n+1}$ with the parabolic distance without any difficulty. Use parabolic $m$-adic cubes instead of Euclidean dyadic cubes. Although the statement of Frostman's lemma in \cite{Mattila} involves $\Haus^\rho_\infty$, the proof for $E$ contained in some $Q\in\Delta$ only uses coverings by $m$-adic cubes, and thus, one can write $\Net^\rho_\infty$ in the first inequality of \eqref{frostman-c} as indicated (with $c=2^{-(n+1)}$). The second inequality in \eqref{frostman-c} follows from Remark \ref{hausdorff-net}. More generally, if we only know that $E\subset Q$ is Souslin, then we can first choose a compact set $K\subset E$ such that $\Net^\rho_\infty(K)\geq (1/2) \Net^\rho_\infty(E)$; e.g.~combine Corollary 3 (to Theorem 48) and Theorem 52 in \cite{Rogers}. Then we can take the Frostman measure for $E$ to be the Frostman measure for $K$. This requires that we replace $c$ by $c/2$. In general, $E$ may not be contained in a single cube $Q\in\Delta$, but it is contained in $3^{n+1}$ or fewer pairwise disjoint cubes $Q_i\in\Delta$ with $\side Q_i\geq \diam E$ and $\Net^\rho_\infty(E)\leq\sum_i \Net^\rho_\infty (E\cap Q_i)$. If $\mu_i$ is a Frostman measure for $E\cap Q_i$ with constant $c$ for each $i$, then $\mu = 3^{-(n+1)}\sum_i \mu_i$ is a Frostman measure for $E$ with constant $3^{-(n+1)}c$. Finally, \eqref{frostman-A} follows from \eqref{frostman-Q} and Remark \ref{hausdorff-net}.\end{proof}

The following statement gives a method to bound the upper Hausdorff dimension of a measure from above.

\begin{theorem}\label{dim-lemma} Fix $\Delta=\Delta^m(\RR^{n+1})$. Let $\mu$ be a Radon measure on $\RR^{n+1}$ and let $E\subset\RR^{n+1}$ be a Borel set with $\mu(\RR^{n+1}\setminus E)=0$. If there exist constants $0<\rho<n+2$ and $\lambda>0$ such that for every $Q\in\Delta$ with $\side Q\leq 1$, $$\Net^{n+2-\rho}_{m^{-1}\side Q}(E\cap Q)<(\side Q)^{n+2-\rho}\quad\text{or}$$ $$\sum_{R\in\Child(Q)} \mu(R)^{1/2} (\vol R)^{1/2} \leq m^{-\lambda}\, \mu(Q)^{1/2}(\vol Q)^{1/2},$$ then the upper parabolic Hausdorff dimension of $\mu$ is at most $n+2-\lambda\rho/(\lambda+\rho)$.
\end{theorem}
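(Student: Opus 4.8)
The plan is to run a stopping-time (Calderón--Zygmund-type) argument on the $m$-adic tree, using the dichotomy in the hypothesis to extract a good covering of a full-measure subset of $E$ by ``small-content'' cubes, while the alternative branch of the dichotomy forces geometric decay of the Frostman-type quantity $\mu(Q)^{1/2}(\vol Q)^{1/2}$ down the tree. First I would reduce to the case where $E$ is contained in a single cube $Q_0 \in \Delta$ with $\side Q_0 \leq 1$ and $\mu$ is finite (by countable stability of upper Hausdorff dimension, Remark~\ref{stable}, and inner regularity of the Radon measure $\mu$); it suffices to show $\overline{\dim}_H (\mu\res Q_0) \leq n+2-\lambda\rho/(\lambda+\rho)$ for each such $Q_0$. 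Set $s := \lambda\rho/(\lambda+\rho)$, so $0 < s < \rho < n+2$, and fix a target dimension $\sigma$ slightly above $n+2-s$; the goal is to produce a Borel set $G\subset Q_0$ with $\mu(Q_0\setminus G)=0$ and $\Haus^\sigma(G)=0$, equivalently (by Remark~\ref{hausdorff-net}) $\Net^\sigma(G)=0$.

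The core construction: starting from $Q_0$, I run a stopping time. A cube $Q$ in the tree below $Q_0$ is declared \emph{stopped} (a ``good'' cube, put into a collection $\cS$) as soon as the first alternative holds, i.e.\ $\Net^{n+2-\rho}_{m^{-1}\side Q}(E\cap Q) < (\side Q)^{n+2-\rho}$; otherwise $Q$ is \emph{not stopped}, so the second alternative holds at $Q$, namely $\sum_{R\in\Child(Q)}\mu(R)^{1/2}(\vol R)^{1/2} \leq m^{-\lambda}\mu(Q)^{1/2}(\vol Q)^{1/2}$, and we pass to its children. Let $\cS$ be the maximal stopped cubes and let $\cB = \bigcap_k \{\text{union of non-stopped cubes of generation } k\}$ be the set of points for which the process never stops. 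Then $E \cap Q_0 \subset \big(\bigcup_{Q\in\cS} Q\big) \cup \cB$. For the bad set $\cB$: iterating the second alternative along any non-stopping chain and summing $\mu(R)^{1/2}(\vol R)^{1/2}$ over generation-$k$ non-stopped cubes gives $\sum \mu(R)^{1/2}(\vol R)^{1/2} \leq m^{-\lambda k}\mu(Q_0)^{1/2}(\vol Q_0)^{1/2}$; by Cauchy--Schwarz against $\sum \mu(R)\leq \mu(Q_0)$ this forces $\sum_{\text{gen }k,\ \text{non-stopped}} \vol R \to 0$, so $\vol(\cB)=0$, and a more careful accounting (splitting cubes by whether $\mu(R)\geq (\vol R)^{1-\epsilon_0}$ or not, or a direct Frostman/energy estimate) shows $\Haus^\sigma(\cB)=0$ for the relevant $\sigma$; alternatively one shows directly $\mu(\cB)=0$ and discards it. So up to a $\mu$-null set, $E\cap Q_0 \subset \bigcup_{Q\in\cS}Q$.

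It remains to estimate $\mu$ on the good cubes and to build the covering realizing small $\Net^\sigma$. For each $Q\in\cS$: the first alternative gives a covering of $E\cap Q$ by $m$-adic subcubes $\{R_{Q,j}\}$ of side at most $m^{-1}\side Q$ with $\sum_j (\side R_{Q,j})^{n+2-\rho} < (\side Q)^{n+2-\rho}$. Combined with Frostman's lemma (Theorem~\ref{frostman}, applied to $E\cap R_{Q,j}$, or rather the bound $\mu(R)\leq (\side R)^{\rho}$ if we had such a measure) — more precisely, I would interpolate: writing each $(\side R_{Q,j})^{\sigma} = (\side R_{Q,j})^{n+2-\rho}\cdot(\side R_{Q,j})^{\sigma - (n+2-\rho)}$ and using $\mu(R_{Q,j})^{1/2}(\vol R_{Q,j})^{1/2} = \mu(R_{Q,j})^{1/2}(\side R_{Q,j})^{(n+2)/2}$ together with the chain-decay bound pushed one more step into the $R_{Q,j}$'s — one obtains $\sum_j (\side R_{Q,j})^{\sigma} \leq C\, m^{-(\text{gap})} (\side Q)^{\sigma}$ for a favorable gap, where the exponent bookkeeping is exactly what produces $s = \lambda\rho/(\lambda+\rho)$ (balance the Hausdorff-content gain of exponent $\rho$ against the measure-decay gain of rate $\lambda$ via Cauchy--Schwarz/Hölder). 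Iterating this refinement inside each good cube produces, for every $\eta>0$, a covering of $\bigcup_{Q\in\cS}Q$ (hence of a full-measure subset of $E$) by $m$-adic cubes with $\sum (\side \cdot)^{\sigma} < \eta$ and arbitrarily small side lengths, i.e.\ $\Net^\sigma$ of a full-measure subset is $0$; letting $\sigma \downarrow n+2-s$ finishes the proof.

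The main obstacle I anticipate is the exponent bookkeeping in the last paragraph — getting the two gains (the content gain of order $\rho$ from the first alternative and the measure-decay gain of rate $\lambda$ from the second alternative) to combine, via a single application of Cauchy--Schwarz/Hölder at each stopped cube, into precisely the harmonic-mean exponent $\lambda\rho/(\lambda+\rho)$, and making sure the constants accumulated over infinitely many generations stay bounded (they should, since the per-generation loss is a fixed power of $m$ beaten by the geometric decay). Handling the bad set $\cB$ cleanly (showing it is $\mu$-null, so it can simply be discarded, rather than bounding its dimension) is a secondary technical point but should follow from the Cauchy--Schwarz estimate above.
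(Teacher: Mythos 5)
Your proposal heads in the right direction and identifies the two mechanisms that must be balanced, but the stopping-time as you set it up has a genuine gap: you stop at the first type-1 cube, yet the stopped collection $\cS$ can contain cubes at \emph{arbitrary} depth (there is no bound on how many type-2 generations can precede the first type-1 cube). At depth $a$, there can be up to $m^{(n+2)a}$ cubes in $\cS$, each contributing $(\side Q)^\sigma = m^{-\sigma a}$, so the raw sum $\sum_{Q\in\cS}(\side Q)^\sigma$ can already be as large as $\sum_a m^{(n+2-\sigma)a}$, which diverges since $\sigma < n+2$. The one-step content gain $\sum_j(\side R_{Q,j})^\sigma \lesssim m^{-(\rho-s)}(\side Q)^\sigma$ coming from the type-1 alternative at $Q$ is of no use if $\sum_{Q\in\cS}(\side Q)^\sigma$ is already infinite. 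In other words, the measure-decay information from the type-2 ancestors of a stopped cube is essential to control the \emph{stopped} cubes themselves — not merely the never-stopping set $\cB$, which is where your proposal confines it.

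This is precisely what the paper's construction is designed to fix: rather than stopping at the first type-1 cube, the tree passes through type-1 cubes (replacing them by the content-covering $Q^i$'s) and type-2 cubes (replacing them by children) alike, is truncated at scale $\delta$, and then terminal cubes are classified by whether they have fewer than $s \approx \epsilon\log_m(1/\delta)$ type-2 ancestors (the collection $\mathcal{E}$, where content iterates favorably) or at least $s$ type-2 ancestors (the collection $\mathcal{F}$, where measure decay kicks in). The further split of $\mathcal{F}$ into $\mathcal{G}$ (cubes where $\mu(Q)\geq(\side Q)^{n+2-\lambda\epsilon}$, controlled by content via the Frostman threshold) and $\mathcal{F}\setminus\mathcal{G}$ (controlled by Cauchy--Schwarz and measure decay, giving $o(\delta)$ mass) is the precise form of the ``more careful accounting'' you gesture at for $\cB$, except that it has to be applied to the deep stopped cubes as well. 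A related issue: the Cauchy--Schwarz estimate $\sum\mu(R)^{1/2}(\vol R)^{1/2}\le m^{-\lambda k}$ does \emph{not} directly force $\mu(\cB)=0$ or $\vol(\cB)=0$ (Cauchy--Schwarz has no lower-bound direction); the cubes on the $\mathcal{G}$-side may still carry positive $\mu$-measure, and their dimension has to be controlled by the content bound rather than discarded. So the skeleton of your argument would need to be reorganized along the lines of the paper's $\delta$-truncated tree with the $\mathcal{E}/\mathcal{F}/\mathcal{G}$ trichotomy and a Borel--Cantelli limit in $\delta$ at the end.
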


\begin{proof} We model the proof on the argument in \cite[\S2]{Bourgain}, which yields $\overline{\dim}_H\,\mu<n+2$, but keep better track of parameters in order to achieve the sharper conclusion. Because $$\overline{\dim}_H\, \mu = \sup\{\overline{\dim}_H\, \mu\res Q:Q\in\Delta,\ \side Q=1\},$$ we may assume without loss of generality that $E\subset Q_0$ for some $Q_0\in\Delta$ with $\side Q_0=1$. Furthermore, scaling $\mu$, we may assume without loss of generality that $\mu(Q_0)\leq 1$.

Fix $0<\epsilon<1$. We will prove that $$\overline{\dim}_H\,\mu\leq \max\{n+2-\rho(1-\epsilon),n+2-\lambda\epsilon\}.$$ This suffices, because for $\epsilon=\rho/(\lambda+\rho)$, we have $$n+2-\rho(1-\epsilon)=n+2-\lambda\epsilon=n+2-\lambda\rho/(\lambda+\rho).$$
Towards the goal, fix $0<\delta<1$ sufficiently small such that $\epsilon \log_m(1/\delta)\geq 1$ and assign $s=\lceil \epsilon\log_m(1/\delta)\rceil$. We now build a tree $\mathcal{T}=\bigcup_{l=0}^{L} \mathcal{T}_l$ of finite depth $L\leq \lceil \log_m(1/\delta)\rceil$, inductively. Each set in $\mathcal{T}$ is a parabolic $m$-adic cube contained in $Q_0$. Note that while each cube $Q\in\mathcal{T}_{l+1}$ is contained in a unique cube $P\in\mathcal{T}_l$ with $\side P>\side Q$, we shall not specify side lengths of children in $\mathcal{T}$. In other words, the children of a cube $Q$ in $\mathcal{T}$ are descendents of $Q$ in $\Delta$, but are not necessarily children of $Q$ in $\Delta$.

\emph{Base Case.} At the top level, put $\mathcal{T}_0=\{Q_0\}$.

\emph{Induction Step.}  Suppose that $\mathcal{T}_l$ is nonempty for some level $l\geq 0$ and let $Q\in\mathcal{T}_l$. If $\side Q\leq \delta$, then we stop and declare $Q$ to be terminal in $\mathcal{T}$. Otherwise, we shall include at least one and possibly $\aleph_0$ many descendents of $Q$ in $\Delta$ as sets $\mathcal{T}_{l+1}$. If it occurs that  \begin{equation}\label{e:type1} \Net^{n+2-\rho}_{m^{-1}\side Q}(E\cap Q)<(\side Q)^{n+2-\rho},\end{equation} then we say that $Q$ is \emph{type 1}. Choose a finite or countably infinite list $Q^1,Q^2,\dots\in\Delta$ of pairwise disjoint cubes such that $\side Q^i\leq m^{-1}\side Q$ for all $i$, $E\cap Q\subset \bigcup_i Q^i$, and \begin{equation}\label{e:type1-1}\sum_i (\side Q^i)^{n+2-\rho} < (\side Q)^{n+2-\rho}.\end{equation} We include the cube $Q^i\in\mathcal{T}_{l+1}$ for each $i$. According to the hypothesis, if the cube $Q$ is not type 1, then \begin{equation}\label{e:type2}\sum_{R\in\Child(Q)} \mu(R)^{1/2} (\vol R)^{1/2} \leq m^{-\lambda}\, \mu(Q)^{1/2}(\vol Q)^{1/2},\end{equation} where $\Child(Q)$ denotes the set of all children of $Q$ in $\Delta$. In this instance, we say that $Q$ is \emph{type 2}. Include each $R\in\Child(Q)$ in $\mathcal{T}_{l+1}$. Repeating this procedure for each $Q\in\mathcal{T}_l$ completes the definition of $\mathcal{T}_{l+1}$.

To see that $\mathcal{T}$ has finite depth, simply note that if $Q\in\mathcal{T}_l$, $R\in\mathcal{T}_{l+1}$, and $R\subset Q$, then $\side R\leq m^{-1} \side Q$. If $\mathcal{T}_{\lceil \log_m(1/\delta)\rceil}$ is nonempty, then $\side Q\leq m^{-\log_m(1/\delta)}\side Q_0\leq \delta$ and $Q$ is terminal in $\mathcal{T}$ for every $Q\in\mathcal{T}_{\lceil \log_m(1/\delta)\rceil}$. Thus, $\mathcal{T}_{\lceil \log_m(1/\delta)\rceil+1}$ is never defined.

For later use, note that for any $Q\in\mathcal{T}$ (in particular, for cubes of type 2), \begin{equation}\label{e:type2-2}\Net^{n+2-\rho}_{m^{-1}\side Q}(E\cap Q)
\leq \sum_{R\in\Child(Q)} (\side R)^{n+2-\rho} = m^{\rho} (\side Q)^{n+2-\rho}.
\end{equation} However, whenever $Q\in\mathcal{T}_l$ is type 1, equation \eqref{e:type1-1} yields the improved estimate \begin{equation}\label{e:type1-2}\Net^{n+2-\rho}_{m^{-1}\side Q}(E\cap Q)\leq \sum_i (\side Q^i)^{n+2-\rho}< (\side Q)^{n+2-\rho}.\end{equation}

\emph{Goal.} Write $\eta=\max\{n+2-\rho(1-\epsilon),n+2-\lambda\epsilon\}$. With the tree $\mathcal{T}$ in hand, we will shortly identify a measurable subset $F_\delta\subset E$ such that \begin{equation}\label{M-goal} \mu(\RR^{n+1}\setminus F_{\delta})=\mu(E\setminus F_\delta)=o(\delta)\quad\text{and}\quad\Net^{\eta+\alpha}_{m\,\delta^{\epsilon}}(F_\delta)\leq 2m^\alpha\delta^{\epsilon\alpha}\quad\text{for all }\alpha\geq 0.\end{equation}
To proceed, let $\mathcal{T}^*$ denote the set of all terminal cubes in $\mathcal{T}$. Observe that $\mathcal{T}^*\subset \bigcup_1^L \mathcal{T}_l$, because $Q_0$ is not terminal as $\delta<1=\side Q_0$. We now use the terminal cubes to define two collections $\mathcal{E},\mathcal{F}\subset\mathcal{T}$ with $\mathcal{E}\cap\mathcal{F}=\emptyset$. We say that $Q\in \mathcal{T}^*\cap \mathcal{T}_l$ is \emph{efficient for $\Net^{n+2-\rho}$} and include $Q\in\mathcal{E}$ if the chain $Q_0,\dots,Q_{l-1}$ of cubes strictly above $Q$ in $\mathcal{T}$ contains fewer than $s=\lceil \epsilon \log_m(1/\delta)\rceil$ type 2 cubes. Otherwise, if $Q\in\mathcal{T}^*$ is not efficient for $\Net^{n+2-\rho}$, then there exists an ancestor $P$ of $Q$ in $\mathcal{T}$ such that $P$ is the $s$-th type 2 cube appearing in the chain from $Q_0$ to $Q$, starting from $Q_0$; in this case, we include $P$ in the collection $\mathcal{F}$. By construction of the tree, we have $E\subset \bigcup\mathcal{T}^*\subset \bigcup\mathcal{E} \cup \bigcup \mathcal{F}$. Furthermore, $\side Q\leq \delta$ for all $Q\in\mathcal{E}$ and $\delta<\side Q\leq m^{-(s-1)} \leq m \delta^\epsilon$ for all $Q\in\mathcal{F}$. Thus, $\mathcal{E}\cup\mathcal{F}$ is a cover of $E$ by parabolic $m$-cubes of side length at most $m\,\delta^\epsilon$. Define $\mathcal{G}$ to be the set of all $Q\in\mathcal{F}$ such that $(\side Q)^{n+2-\lambda\epsilon} \leq \mu(Q)$ and put $F_\delta = E\cap \bigcup(\mathcal{E}\cup \mathcal{G})$.

We now make a series of estimates towards \eqref{M-goal}. Recall that at most $s-1\leq \epsilon\log_m(1/\delta)$ ancestors of cubes in $\mathcal{E}$ are type 2. By \eqref{e:type2-2} and \eqref{e:type1-2}, summing layer-by-layer up the tree from $\mathcal{E}$ to $Q_0$, \begin{equation*}\sum_{Q\in\mathcal{E}} (\side Q)^{n+2-\rho}\leq \cdots \leq (m^\rho)^{s-1} (\side Q_0)^{n+2-\rho} \leq \delta^{-\rho\epsilon}. \end{equation*} Hence, since $\side Q\leq \delta<1$ for all $Q\in\mathcal{E}$, we have that for any $\alpha\geq 0$, $$\Net^{\eta+\alpha}_{m\,\delta^\epsilon}(E\cap \bigcup\mathcal{E}) \leq \sum_{Q\in\mathcal{E}} (\side Q)^{n+2-\rho(1-\epsilon)+\alpha}
\leq \delta^{\rho\epsilon+\alpha}\sum_{Q\in\mathcal{E}} (\side Q)^{n-2-\rho} \leq \delta^\alpha.$$ The estimate for $E\cap\bigcup\mathcal{G}$ is even easier. By definition of $\mathcal{G}$, for any $\alpha\geq 0$,
$$\Net^{\eta+\alpha}_{m\,\delta^\epsilon}(E\cap \bigcup\mathcal{G}) \leq \sum_{Q\in\mathcal{G}} (\side Q)^{n+2-\lambda\epsilon+\alpha} \leq m^\alpha\delta^{\epsilon\alpha}\sum_{Q\in\mathcal{G}} \mu(Q) \leq m^\alpha\delta^{\epsilon\alpha}.$$ Thus, $\Net^{\eta+\alpha}_{m\,\delta^\epsilon}(F_\delta) \leq \delta^\alpha+m^\alpha\delta^{\epsilon\alpha}\leq 2m^{\alpha}\delta^{\epsilon\alpha}$ for all $\alpha\geq 0$. To verify \eqref{M-goal}, it remains to estimate $\mu(E\setminus F_\delta)$. Observe that $$\vol Q = (\side Q)^{n+2} > (\side Q)^{\lambda\epsilon} \mu(Q) > \delta^{\lambda\epsilon} \mu(Q)\quad\text{for all }Q\in\mathcal{F}\setminus\mathcal{G}.$$ Now, by the Cauchy-Schwarz inequality, $$\sum_{\{R\}} \mu(R)^{1/2}(\vol R)^{1/2}\leq \mu(Q)^{1/2}(\vol Q)^{1/2}$$ whenever $Q$ is partitioned by cubes $\{R\}$. However, we have the improved estimate \eqref{e:type2} whenever $Q\in\mathcal{T}$ is type 2. As every $Q\in\mathcal{F}$ has $(s-1)$ ancestors in $\mathcal{T}$ of type 2, we get\begin{align*}
\mu(E\setminus F_\delta) \leq \sum_{Q\in\mathcal{F}\setminus\mathcal{G}} \mu(Q)&< \delta^{-\lambda\epsilon/2}\sum_{Q\in\mathcal{F}\setminus\mathcal{G}} \mu(Q)^{1/2}(\vol Q)^{1/2} \\ &\leq \delta^{-\lambda\epsilon/2}m^{-\lambda(s-1)}\mu(Q_0)^{1/2}(\vol Q_0)^{1/2}\leq m^{\lambda}\delta^{\lambda\epsilon/2}=o(\delta),\end{align*} as desired.

\emph{Conclusion.} Choose any sequence $\delta_1> \delta_2>\cdots > 0$ such that $\sum_1^\infty \mu(E\setminus F_{\delta_i}) <\infty$ and put $F=\liminf_{j\rightarrow\infty} F_{\delta_j}=\bigcup_{j=1}^\infty\bigcap_{i=j}^\infty F_{\delta_i}$. By the Borel-Cantelli lemma, the set $E\setminus F=\limsup_{j\rightarrow\infty} (E\setminus F_{\delta_i})$ has $\mu$ measure zero. Finally, $F$ is Borel (since $E$ is Borel) and for any $\alpha>0$, $$\Net^{\eta+\alpha}(F) = \lim_{j\rightarrow\infty} \Net^{\eta+\alpha}\left(\bigcap_{i=j}^\infty F_{\delta_i}\right)
=\lim_{j\rightarrow\infty}\sup_{k\geq j} \Net^{\eta+\alpha}_{m\,\delta_k^\epsilon}\left(\bigcap_{i=j}^\infty F_{\delta_i}\right)
\leq \lim_{j\rightarrow\infty} 2m^{\alpha} \delta_j^{\epsilon\alpha}=0$$ by continuity from below and \eqref{M-goal}. Therefore, $\overline{\dim}_H\,\mu \leq \dim_H F\leq \eta$.
\end{proof}

\begin{remark}At the end of the proof, we could also take $\alpha=0$ and obtain the stronger conclusion $\mathcal{M}^\eta(F)\leq 2<\infty$, whence $\mu$ is carried by (countable unions of) sets of finite $\Haus^\eta$ measured. Of course, this is only interesting if $\overline{\dim}_H\, \mu = \eta$. To that end:\end{remark}

\begin{question}\label{q:sharp} For some/each choice of $\rho$ and $\lambda$, does there exist a measure $\mu$ satisfying the hypothesis of Theorem \ref{dim-lemma} with $\overline{\dim}_H\, \mu = n+2 - \lambda\rho/(\lambda+\rho)$?\end{question}

Finally, we need the following fact in \S\ref{sec:constants}. See \cite[Theorem 8.10]{Mattila} for details.

\begin{lemma}\label{l:products} Let $A\subset\RR^n$ and $B\subset\RR$ (equipped with the snowflaked metric $|x-y|^{1/2}$) be non-empty Borel sets. \begin{enumerate}
\item If $\Haus^s(A)>0$ and $\Haus^t(B)>0$, then $\Haus^{s+t}(A\times B)>0$.
\item In general, $\dim_H (A\times B)\geq \dim_H A + \dim_H B$.
\item If $\dim_H B=2$, then $\dim_H(A\times B)=\dim_H A + 2$.\end{enumerate}\end{lemma}

\section{Preliminaries II: heat potential theory and caloric measure} \label{sec:heat}

In this section, we give a brief synopsis of heat potential theory, which is presented with great clarity in the excellent book of Watson \cite{Watson}. For additional results, the daring may consult the comprehensive research monograph of Doob \cite{Doob}. Recent advances moving parabolic potential theory closer to the time-independent theory are given by Mourgoglou and Puliatti \cite{MP-caloric}. At the end of the section, we give a statement and proof of the strong Markov property for caloric measure, which is unlikely new, but we are not aware of a good reference to it in the literature. The subtle interplay of the essential and singular boundaries of nested domains makes things a bit tricky.

To \emph{formulate} the Dirichlet problem for the heat equation, $-\Delta_X\,u + \partial_t\,u=0$, we must first classify different parts of the topological boundary of a domain in $\RR^{n+1}=\RR^n\times\RR$. Because of the asymmetric flow of time, it is useful to split balls in the parabolic distance into time-backwards and time-forwards halves. For all $(X,t)\in\RR^{n+1}$ and $r>0$, let $$U^-_r(X,t)=U_{\RR^n}(X,r)\times(t-r^2,t)\quad\text{and}\quad U^+_r(X,t)=U_{\RR^n}(X,r)\times(t,t+r^2)$$ denote half balls excluding the present and extending into the past and future, respectively. Here $U_{\RR^n}(X,r)$ denotes the usual open ball in $\RR^n$ with the Euclidean distance. Note that, because the parabolic distance \eqref{e:p-d} is defined using the supremum norm, each $U^\pm_r(X,t)$ has the shape of a \emph{cylinder} in the time-axis whose \emph{base} is a ball in the space coordinates.

\begin{definition}[{classification of boundary points}]\label{parabolicb} Let $\Omega\subsetneq\RR^{n+1}$ be a nonempty open set and let $\partial\Omega$ denote the topological boundary of $\Omega$ in a one-point compactification of $\RR^{n+1}$ so that $\partial\Omega$ includes the point at infinity, which we denote by $\infty$, if $\Omega$ is unbounded. Following \cite[\S8.1]{Watson}, we partition $\partial\Omega$ as $$\partial\Omega=\partial_e\Omega\cup\partial_s\Omega=(\partial_n\Omega\cup\partial_{ss}\Omega)\cup\partial_s\Omega,$$ where $(X,t)\in \partial\Omega$ belongs to \begin{itemize}
\item the \emph{normal boundary} $\partial_n\Omega$ if either $(X,t)=\infty$ or $U^-_r(X,t)\setminus \Omega\neq\emptyset$ for every $r>0$;
\item the \emph{semi-singular boundary} $\partial_{ss}\Omega$ if $U^-_{r_0}(X,t)\subset \Omega$ for some $r_0>0$ \emph{and} also $U^+_{r_1}(X,t)\cap \Omega\neq\emptyset$ for every $0<r_1<r_0$; and,
\item the \emph{singular boundary} $\partial_s\Omega$ if $U^-_{r_0}(X,t)\subset \Omega$ for some $r_0>0$ \emph{and} $U^+_{r_1}(X,t)\cap \Omega=\emptyset$ for some $0<r_1<r_0$.
\end{itemize} The \emph{essential boundary} $\partial_e\Omega$ is the union of the normal and semi-singular boundaries.
\end{definition}

That is to say, the normal boundary consists of those points, which see the complement of $\Omega$ in the past. In the \emph{abnormal boundary} $\partial\Omega\setminus\partial_n\Omega$, the semi-singular boundary points are those that can be approached by a sequence of points in $\Omega$ from the future, whereas the singular boundary points cannot. See Figure \ref{fig:boundary}. For any domain $\Omega$, there exists a countable set $T$ of times such that $\partial_{ss}\Omega\cup\partial_s\Omega\subset \RR^n\times T$ (see \cite[Theorem 8.40]{Watson}). \begin{figure}\begin{center}\includegraphics[width=.47\textwidth]{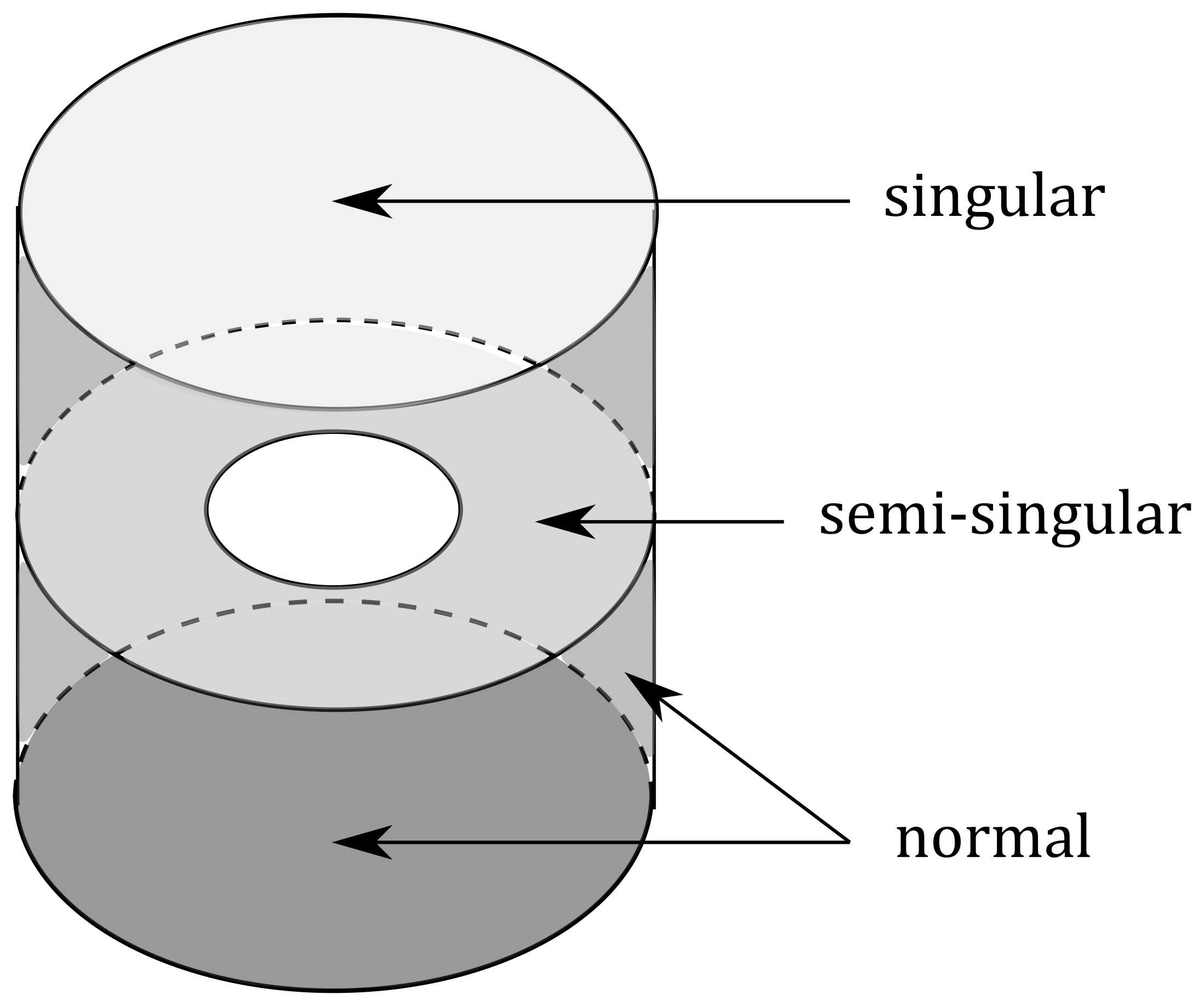}\end{center}\caption{The normal, semi-singular, and singular boundaries of two stacked open cylinders in $\RR^2\times\RR$, connected by a circular cut in the center. Time is oriented vertically, flowing upward.}\label{fig:boundary}\end{figure}

\begin{remark} The definitions of $\partial_n\Omega$, $\partial_{ss}\Omega$, and $\partial_s\Omega$ in \cite{Watson} uses time-backwards and time-forwards half balls in $\RR^{n+1}$ equipped with the Euclidean metric instead of $U^-_r(X,t)$ and $U^+_r(X,t)$. However, as any Euclidean half-ball is contained in a parabolic half-ball and vice-versa, the two definitions are equivalent.\end{remark}

The \emph{Dirichlet problem for the heat equation} on a domain (i.e.~a connected, open set) $\Omega\subset\RR^{n+1}$ is to find, for each (bounded) continuous function $f$ on the essential boundary, a \emph{temperature} $u(X,t)$, i.e.~a $C^{2,1}\equiv C_X^2C_t^1$ solution of the heat equation in $\Omega$, such that \begin{equation}\label{e:Dir-conditions}\left\{\begin{array}{l} u(X,t)\rightarrow f(X_0,t_0)\quad\text{as $(X,t)\in\Omega\rightarrow (X_0,t_0)\in\partial_n\Omega$},\\
u(X,t)\rightarrow f(X_0,t_0)\quad\text{as $(X,t)\in\Omega\rightarrow (X_0,t_0)\in\partial_{ss}\Omega$ with $t>t_0$}.\end{array}\right.\end{equation} Note that we only consider limits at points in the semi-singular boundary as they are approached from the future. There are no time restrictions on the approach to points in the normal boundary. In the Dirichlet problem, we never specify the boundary values of a solution along the singular boundary. This is justified by thinking about the flow of heat from the past to the future in the ``stacked cylinder'' domain in the figure.

As with the Dirichlet problem for harmonic functions, the Dirichlet problem for the heat equation does not admit a classical solution on arbitrary domains, because \eqref{e:Dir-conditions} may fail for some $f\in C(\partial_e\Omega)$ at \emph{irregular} boundary points $(X_0,t_0)\in\partial_e\Omega$. On the other hand, if at some $(X_0,t_0)\in\partial_e\Omega$, \eqref{e:Dir-conditions} holds for every $f\in C(\partial_e\Omega)$, then we say that $(X_0,t_0)$ is \emph{regular} for the Dirichlet problem. Regularity of a boundary point is a property that depends on the domain $\Omega$ (actually, on the asymptotic behavior of the thermal capacity of the complement  $\RR^{n+1}\setminus\Omega$ at the boundary point). We refer the reader to \cite[\S8.5]{Watson} and \cite{Lanconelli-Wiener,EG-Wiener}  for details.

It is useful to be able to consider the Dirichlet problem and its solution in the extended sense of Perron-Wiener-Brelot. The description is rather long. As a shortcut, we refer the reader to \cite[\S\S3.2,\,8.2]{Watson} for the definition of hypotemperatures / hypertemperatures and subtemperatures / supertemperatures. Let $\Omega\subsetneq\RR^{n+1}$ be a domain and let $f$ be an extended real-valued function on $\partial_e\Omega$. The \emph{lower class} $\mathfrak{L}^f_\Omega$ consists of all hypotemperatures $u$ on $\Omega$ bounded from above (including $u\equiv -\infty$) such that \begin{equation}\label{e:lower}\left\{\begin{array}{ll} \limsup_{(X,t)\rightarrow(X_0,t_0)} u(X,t)\leq f(X_0,t_0)\quad&\text{for all } (X_0,t_0)\in\partial_n\Omega,\\
\limsup_{(X,t)\rightarrow(X_0,t_0),\,t>t_0} u(X,t)\leq f(X_0,t_0)\quad&\text{for all }(X_0,t_0)\in\partial_{ss}\Omega.\end{array}\right.\end{equation} Similarly, the \emph{upper class} $\mathfrak{U}^f_\Omega$ consists of all hypertemperatures $v$ on $\Omega$ bounded from below (including $v\equiv +\infty$) such that \begin{equation}\label{e:upper}\left\{\begin{array}{ll} \liminf_{(X,t)\rightarrow(X_0,t_0)} v(X,t)\geq f(X_0,t_0)\quad&\text{for all } (X_0,t_0)\in\partial_n\Omega,\\
\liminf_{(X,t)\rightarrow(X_0,t_0),\,t>t_0} v(X,t)\geq f(X_0,t_0)\quad&\text{for all }(X_0,t_0)\in\partial_{ss}\Omega.\end{array}\right.\end{equation} It can be shown that $u\leq v$ on $\Omega$ for all $u\in\mathfrak{L}^f_\Omega$ and $v\in\mathfrak{U}^f_\Omega$. We call $$L^f_\Omega\equiv \sup\mathfrak{L}^f_\Omega\quad\text{and}\quad U^f_\Omega\equiv \inf\mathfrak{U}^f_\Omega$$ the \emph{lower} and \emph{upper solutions} for $f$ on $\Omega$, respectively. If  $L^f_\Omega=U^f_\Omega$ on $\Omega$, then we say that $f$ is \emph{resolutive} and declare $H^f_\Omega\equiv L^f_\Omega=U^f_\Omega$ to be the \emph{PWB solution} to the Dirichlet problem for the heat equation on $\Omega$ with boundary data $f$. For any resolutive function, the PWB solution is a temperature in $\Omega$. Every $f\in C(\partial_e\Omega)$ is resolutive and $u=H^f_\Omega$ satisfies \eqref{e:Dir-conditions} at all regular points $(X_0,t_0)\in\partial_e\Omega$. For details and further results, we refer the reader to \cite{Watson}. All of this preamble leads to...

\begin{theorem}[existence and support of caloric meaure] \label{parabolic}
Let $\Omega\subsetneq \RR^{n+1}$ be a connected, open set. For each $(X,t)\in\Omega$, there exists a unique Borel regular metric outer measure called the \emph{caloric measure} of $\Omega$ with \emph{pole} at $(X,t)$ such that for each function $f\in C(\partial_e\Omega)$,  for each indicator function $f=\chi_E$ of a Borel set $E\subset\partial_e\Omega$, and more generally, for each function with $f\in L^1(\omega^{X,t}_\Omega)$ for all $(X,t)\in\Omega$, the PWB solution $H^f_\Omega$ exists and
\begin{equation}\label{parmeasuredef}
H^f_\Omega(X,t)=\int f\,d\omega^{X,t}_\Omega\quad\text{for all }(X,t)\in\Omega.
\end{equation} For each $(X,t)$, the caloric measure $\omega^{X,t}_\Omega$ is a probability measure carried by the subset of the essential boundary that is accessible by strictly backwards-in-time paths in $\Omega$ starting from the pole. That is, \begin{equation}\label{omega-support}\omega^{X,t}_\Omega(\RR^{n+1})=\omega^{X,t}_\Omega(\partial_e\Omega)=\omega^{X,t}_\Omega(\Gamma^-_\Omega(X,t))=1,\end{equation} where $\Gamma^-_\Omega(X,t)$ denotes the set of all points $(Y,s)\in\RR^{n+1}$ with $s<t$ such that there exists a continuous map $\gamma:[0,t-s)\rightarrow\Omega$ satisfying $\gamma(0)=(X,t)$, $\gamma(\tau)\in\RR^{n}\times\{t-\tau\}$ for all $\tau\in[0,t-s)$, and $\lim_{\tau\rightarrow t-s} \gamma(\tau)=(Y,s)$.\end{theorem}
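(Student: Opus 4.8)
\textbf{Proof proposal for Theorem \ref{parabolic} (existence and support of caloric measure).}

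The plan is to produce the measure by the Riesz representation theorem applied to the PWB solution operator, and then to establish the support statement \eqref{omega-support} by a topological/path-lifting argument together with the known structure of the singular and semi-singular boundaries.

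\emph{Existence and the integral formula.} First I would fix $(X,t)\in\Omega$ and consider the linear functional $f\mapsto H^f_\Omega(X,t)$ on $C_c(\partial_e\Omega)$, where $\partial_e\Omega$ is viewed as a locally compact Hausdorff space (a subset of the one-point compactification of $\RR^{n+1}$; note that $\partial_e\Omega$ is a Borel, indeed $G_\delta$, subset of the compact space $\partial\Omega$, since $\partial_s\Omega\cup\partial_{ss}\Omega$ lies in $\RR^n\times T$ for a countable set $T$ by \cite[Theorem 8.40]{Watson}). The key facts from the PWB theory recalled above are that every $f\in C(\partial_e\Omega)$ is resolutive, that $H^f_\Omega$ is a temperature in $\Omega$, that $f\geq 0$ forces $H^f_\Omega\geq 0$ (the constant $0$ lies in $\mathfrak{L}^f_\Omega$), and that $H^1_\Omega\equiv 1$ (the constants are temperatures satisfying both boundary conditions, so $L^1_\Omega=U^1_\Omega=1$). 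Hence $f\mapsto H^f_\Omega(X,t)$ is a positive linear functional of norm $1$, and the Riesz representation theorem yields a unique Borel regular (outer) measure $\omega^{X,t}_\Omega$ with $\omega^{X,t}_\Omega(\partial_e\Omega)=1$ and $H^f_\Omega(X,t)=\int f\,d\omega^{X,t}_\Omega$ for all $f\in C(\partial_e\Omega)$. Being a finite Borel measure on a (sub)metric space, it is automatically a metric outer measure. Extending \eqref{parmeasuredef} from $C(\partial_e\Omega)$ to indicators of Borel sets and then to $L^1(\omega^{X,t}_\Omega)$-functions is the standard resolutivity argument: approximate $\chi_E$ in the appropriate sense by continuous functions and invoke the convergence theorems for PWB solutions (monotone/dominated convergence for the lower and upper classes, as in \cite[\S8.2]{Watson}); I would cite Watson here rather than reprove it. I should also note $\omega^{X,t}_\Omega(\RR^{n+1}\setminus\partial_e\Omega)=0$ because $\omega^{X,t}_\Omega$ is carried by $\partial_e\Omega$ by construction, giving the first two equalities in \eqref{omega-support}.

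\emph{Support in $\Gamma^-_\Omega(X,t)$.} This is the part requiring real work. Write $\Gamma=\Gamma^-_\Omega(X,t)$ and $N=\partial_e\Omega\setminus\cl{\Gamma}$; I want $\omega^{X,t}_\Omega(N)=0$ (and then a small additional argument to handle $\cl\Gamma\setminus\Gamma$). The strategy is: for each point $(X_0,t_0)\in N$, since $(X_0,t_0)$ is not in the closure of the backwards-time-accessible set, there is a parabolic neighborhood and in fact one can peel off the connected component of $\Omega\cap(\RR^n\times(-\infty,t))$ that is ``seen'' from the pole along strictly-decreasing-time paths; call this component $\Omega'$. The pole lies in $\cl{\Omega'}$, every strictly-backwards path from $(X,t)$ stays in $\Omega'$, and $\partial_e\Omega\cap\cl{\Omega'}\subset\cl\Gamma$. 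Using the maximum principle for subtemperatures together with the fact that a subtemperature cannot propagate information forward in time across a time-slice (the parabolic boundary that matters for $\Omega'$ is its normal/semi-singular boundary, which is contained in $\cl\Gamma$), one shows that for $f\in C(\partial_e\Omega)$ supported in $N$ — more precisely, for $0\le f\le 1$ with $f=0$ on a neighborhood of $\cl\Gamma\cap\partial_e\Omega$ — the PWB solution $H^f_\Omega$ restricted to $\Omega'$ is dominated by the PWB solution on $\Omega'$ with boundary data $0$ on the relevant (essential) boundary, hence $H^f_\Omega(X,t)\le 0$. Combined with $H^f_\Omega\ge 0$, this gives $\int f\,d\omega=0$, and taking a sequence $f_k\uparrow\chi_N$ (possible because $N$ is a relatively open-ish / $F_\sigma$ set away from $\cl\Gamma$ — here is where the countability of bad times $T$ and the compactness of $\partial\Omega$ are used to produce the approximating continuous functions) yields $\omega^{X,t}_\Omega(N)=0$. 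Finally, to upgrade from $\cl\Gamma$ to $\Gamma$ I would observe that $\cl\Gamma\setminus\Gamma$ consists of limit points reached only ``in the limit $\tau\to t-s$'', and a point $(Y,s)\in\cl\Gamma\setminus\Gamma$ with $s=t_0$ either already lies in $\Gamma$ (if there is an actual path) or lies on the singular/semi-singular part at a time in $T$; a separate monotone-convergence argument, using that caloric measure of the singular boundary vanishes (it is never assigned boundary data and $H^1=1$ is already achieved on $\partial_n\Omega\cup\partial_{ss}\Omega$), finishes it. I expect citing or adapting \cite[\S8.5]{Watson} for the relevant domain-decomposition and maximum-principle inputs to shorten this considerably.

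\emph{Main obstacle.} The subtle point — flagged in the paragraph preceding the theorem — is the bookkeeping of how the essential/singular boundary of the auxiliary sub-domain $\Omega'$ sits inside the essential/singular boundary of $\Omega$: a point that is singular (hence carries no caloric mass) for $\Omega$ might a priori become normal for $\Omega'$, or vice versa, and one must check that passing to $\Omega'$ does not create new essential boundary outside $\cl\Gamma$ on which mass could hide. Handling this correctly — using that $\partial_{ss}\Omega\cup\partial_s\Omega\subset\RR^n\times T$ with $T$ countable, so that these exceptional slices can be treated one time-level at a time — is where I would spend the bulk of the effort, and it is the step most likely to need a careful, technical lemma rather than a soft argument.
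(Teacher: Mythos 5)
The paper does not actually prove this theorem; its ``proof'' is a one-line citation to Watson's monograph (Theorem 8.27, Lemma 8.29, and Theorem 8.32 in \cite{Watson}) plus a pointer to Suzuki \cite{Suzuki} for the fact that any caloric measure must vanish on $\partial_s\Omega$. So there is no in-paper argument to compare against; what you have written is a reconstruction in the spirit of Watson's development. Your outline is plausible, and you correctly identify the support assertion $\omega^{X,t}_\Omega(\Gamma^-_\Omega(X,t))=1$ and the attendant boundary bookkeeping as the genuinely hard part. But as written the argument has gaps beyond what you flag.

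On the existence half: invoking the Riesz representation theorem on $C_c(\partial_e\Omega)$ requires $\partial_e\Omega$ to be locally compact Hausdorff, and it is not clear that $\partial_e\Omega$ is locally compact --- it need not be locally closed in $\partial\Omega$, and being $G_\delta$ (which is itself not obvious from ``$\partial_{ss}\Omega\cup\partial_s\Omega\subset\RR^n\times T$'' alone) only gives a Polish, not locally compact, space. The cleaner route is to define the functional on $C(\partial\Omega)$ (which is compact in the one-point compactification), obtain a Radon measure on $\partial\Omega$, and then prove $\omega^{X,t}_\Omega(\partial_s\Omega)=0$ separately --- which is precisely what Suzuki's theorem supplies and why the paper cites it as a distinct ingredient. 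Also, even on a compact boundary, Riesz gives you $\int f\,d\omega=H^f_\Omega(X,t)$ for continuous $f$ and $\omega(\partial\Omega)\le 1$; to conclude $\omega(\partial\Omega)=1$ you must use $H^1_\Omega\equiv 1$ together with the fact that $1\in C(\partial\Omega)$, which is fine once you are on the compact boundary but was a genuine gap in the $C_c(\partial_e\Omega)$ version. (Watson's own construction, for what it is worth, does not go through Riesz at all: one shows directly that $E\mapsto H^{\chi_E}_\Omega(X,t)$ is countably additive on Borel subsets of $\partial_e\Omega$ via monotone/uniform convergence properties of PWB solutions.) On the support half: your peeling construction needs $\Omega'$ to be exactly $\Omega\cap\Gamma^-_\Omega(X,t)$, which is open and connected, but it is not generally a connected component of $\Omega\cap(\RR^n\times(-\infty,t))$ --- a wiggly, time-non-monotone path inside that slab may reach points not in $\Gamma$ --- and the crucial containment $\partial_e\Omega'\cap\Omega\subset\cl\Gamma$ and the ensuing maximum-principle comparison are exactly the technical content of Watson's Theorem 8.32, which you are implicitly re-deriving. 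In short: the sketch is a sensible map of the territory and your honesty about where the effort lies is well placed, but it is a sketch, and the paper itself does not offer a proof to check it against --- it defers entirely to \cite{Watson} and \cite{Suzuki}.
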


\begin{proof} See Theorem 8.27, Lemma 8.29, and Theorem 8.32 in \cite{Watson}. The fact that a caloric measure on $\partial\Omega$, if it exists, should be supported in the essential boundary is due to Suzuki \cite{Suzuki}.\end{proof}

\begin{remark}\label{zero-future} The last condition in Theorem \ref{parabolic} guarantees that the caloric measure of the present and the future is zero: $\omega^{X,t}_\Omega(\RR^n\times\{s\in\RR:s\geq t\})=0$ for all $\Omega$ and $(X,t)\in\Omega$. This fact is obvious from the probabilistic interpretation of caloric measure mentioned at the top of the introduction. As temporal beings, we may find the fact that the distribution of heat in the future does not affect us in the present to be reassuring.\end{remark}

\begin{remark}\label{r:harnack} By Harnack's inequality, $\omega^{X_1,t_1}_\Omega\ll\omega^{X,t}_\Omega$ whenever $(X_1,t_1)\in\Omega\cap \Gamma^-_\Omega(X,t)$. See \cite[Corollary 1.33]{Watson}.\end{remark}

\begin{remark} As a warning, especially for readers who are familiar with harmonic measure, we note that there exist domains $\Omega\subset\RR^{n+1}$ such that the set $I_\Omega$ of irregular points in $\partial_e\Omega$ can have \emph{positive} caloric measure (see \cite{TW85}). Such domains have chaotic, rapidly changing time-slices $\Omega_\tau=\Omega\cap(\RR^n\times\{\tau\})$ that make $\Omega$ pathological from the perspective of the Dirichlet problem. On the other hand, be assured that $\omega^{X,t}_\Omega(I_\Omega)=0$ for all $(X,t)\in\Omega$ whenever $\Omega=\Omega_0\times (a,b)$ is a cylinder over a domain $\Omega_0\subsetneq\RR^n$.
\end{remark}

A set $Z\subset\RR^{n+1}$ is said to be \emph{polar} for the heat equation if there exists an open set $\Omega$ containing $Z$ and a supertemperature $v$ on $\Omega$ such that $v(X,t)=\infty$ for every $(X,t)\in Z$. For basic facts about polar sets and related potential theory, see \cite[Chapter 7]{Watson}. Among other results, Taylor and Watson \cite{TW85} prove that (i) \emph{every set $Z\subset\RR^{n+1}$ with $\Haus^n(Z)=0$ is polar}, and also (ii) \emph{every set $Z\subset\RR^{n+1}$ with $\dim_H Z>n$ is not polar}, where as in \S2 Hausdorff measures and dimension are defined using parabolic distance on $\RR^{n+1}$. Polar sets have caloric measure zero in any domain \cite[\S5]{Watson-open}.

\begin{theorem}[maximum principle {\cite[Theorem 8.2]{Watson}}]\label{maximum principle} Let $u$ be a hypotemperature on an open set $\Omega\subset\RR^{n+1}$ and let $Z$ be a polar subset of $\partial_e \Omega$. Suppose that \begin{align}\label{max-1}
\limsup_{(X,t)\to(X_0,t_0)} u(X,t)&<\infty\quad\text{for all }(X_0,t_0)\in \partial_e \Omega\quad \text{and}\\ \label{max-2}
\limsup_{(X,t)\rightarrow(X_0,t_0)} u(X,t)&\leq A\,\quad\text{for all }(X_0,t_0)\in (\partial_e \Omega)\setminus Z,\end{align} where for $(X_0,t_0)\in\partial_{ss}\Omega$ the approach of $(X,t)$ to $(X_0,t_0)$ is along points with $t>t_0$.  Then $u(X,t)\leq A$ for all $(X,t)\in \Omega$.
\end{theorem}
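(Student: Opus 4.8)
The plan is to strip off the exceptional polar set $Z$ and reduce to the maximum principle \emph{without} an exceptional set --- namely: if $w$ is a hypotemperature on $\Omega$ with $\limsup_{(X,t)\to(X_0,t_0)}w(X,t)\le A$ for every $(X_0,t_0)\in\eo$ (the approach restricted to $t>t_0$ when $(X_0,t_0)$ is semi-singular), then $w\le A$ throughout $\Omega$; see \cite[Theorem 8.1]{Watson}. Since $Z$ is polar, there is a nonnegative supertemperature $v$ defined on all of $\RR^{n+1}$ with $v\equiv+\infty$ on $Z$; this follows from the definition of polarity once one knows the local defining supertemperature may be upgraded to a global potential (see \cite[Chapter 7]{Watson}). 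Moreover, polar sets have Lebesgue measure zero in $\RR^{n+1}$ --- for instance because they have parabolic Hausdorff dimension at most $n$ by the theorem of Taylor and Watson \cite{TW85}, so that $\Haus^{n+2}(Z)=0$ --- and a supertemperature that is not identically $+\infty$ is finite Lebesgue-a.e.; since $\{v=+\infty\}$ is itself polar, it follows that $v<\infty$ a.e.\ on $\Omega$.

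Next I would fix $\epsilon>0$ and consider $u_\epsilon:=u-\epsilon v$ on $\Omega$. As $u$ is a hypotemperature and $-\epsilon v$ is a subtemperature (a negative multiple of a supertemperature), and as both take values in $[-\infty,\infty)$ so that no indeterminate sum arises, $u_\epsilon$ is again a hypotemperature on $\Omega$. At a point $(X_0,t_0)\in\eo\setminus Z$, the inequality $v\ge0$ gives $\limsup_{(X,t)\to(X_0,t_0)}u_\epsilon(X,t)\le\limsup_{(X,t)\to(X_0,t_0)}u(X,t)\le A$ by \eqref{max-2}. At a point $(X_0,t_0)\in Z\subset\eo$, lower semicontinuity of $v$ gives $\liminf_{(X,t)\to(X_0,t_0)}v(X,t)\ge v(X_0,t_0)=+\infty$, while \eqref{max-1} gives $\limsup_{(X,t)\to(X_0,t_0)}u(X,t)<\infty$; therefore $\limsup_{(X,t)\to(X_0,t_0)}u_\epsilon(X,t)=-\infty\le A$ --- this is precisely where hypothesis \eqref{max-1} is used. (At semi-singular boundary points all of these one-sided limits are taken along $t>t_0$.) The exceptional-set-free maximum principle now yields $u_\epsilon\le A$ on $\Omega$, i.e.\ $u\le A+\epsilon v$ on $\Omega$. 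Letting $\epsilon\downarrow 0$, it follows that $u(X,t)\le A$ at every $(X,t)\in\Omega$ with $v(X,t)<\infty$, hence for Lebesgue-a.e.\ $(X,t)\in\Omega$.

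The final step is to promote this a.e.\ bound to a pointwise bound, and for that I would invoke the sub-mean-value inequality satisfied by hypotemperatures over (modified) heat balls (see \cite[Ch.\ 3]{Watson}): for every $(X_0,t_0)\in\Omega$ and all sufficiently small radii, $u(X_0,t_0)$ is dominated by an average of $u$ over a heat ball centered at $(X_0,t_0)$ against a fixed $L^1$ density, and since $u\le A$ off a Lebesgue-null set each such average is $\le A$; hence $u(X_0,t_0)\le A$. This is, in outline, Watson's proof of \cite[Theorem 8.2]{Watson}. I expect the real work to lie not in the soft reduction above but in the potential-theoretic inputs it rests on: producing a global supertemperature equal to $+\infty$ exactly on a $G_\delta$ set containing the polar set $Z$ while remaining finite a.e., the validity of the exceptional-set-free maximum principle on a possibly unbounded $\Omega$ (where $\infty\in\no$, and $\infty\notin Z$ since $Z\subset\RR^{n+1}$), and the precise form of the mean-value inequality used at the end --- all of which are developed in \cite[Chapters 3, 7, 8]{Watson}.
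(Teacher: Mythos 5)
The paper does not actually prove this theorem: it is imported verbatim from Watson, with the attribution placed in the theorem header (Theorem 8.2 of \cite{Watson}), and no proof is given in the text. Your argument is a correct reconstruction of the standard way such maximum principles with polar exceptional sets are established, and it matches the proof strategy in the cited reference: pass from the local supertemperature supplied by polarity to a global nonnegative supertemperature $v\equiv+\infty$ on $Z$ (local polarity implies global polarity), observe that hypothesis \eqref{max-1} is exactly what makes $\limsup(u-\epsilon v)=-\infty$ at points of $Z$, apply the exceptional-set-free boundary maximum principle to $u-\epsilon v$, send $\epsilon\downarrow 0$ to get $u\le A$ off the polar (hence Lebesgue-null) set $\{v=+\infty\}$, and finish with the sub-mean-value property of hypotemperatures over heat balls. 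You also correctly flag the point that must be handled for unbounded $\Omega$, namely that $\infty\in\partial_n\Omega$ lies outside $Z$ (since $Z\subset\RR^{n+1}$) and $v\ge 0$ does not disturb the $\limsup$ there, and the point that restricting the approach to $t>t_0$ at semi-singular boundary points only increases the $\liminf$ of $v$. I see no gap.
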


We say that a pointwise defined property $P(X,t)$ holds \emph{nearly everywhere} if the property holds for all points in the complement of a polar set.

\begin{theorem}[strong Markov property for caloric measure] \label{t:strong-markov}
Let $\Omega_1\subset \Omega_2 \subsetneq \RR^{n+1}$ be nested domains. In addition, suppose that \begin{enumerate}
\item nearly every point in $\partial_e\Omega_1$ is regular for the Dirichlet problem on $\Omega_1$,
\item nearly every point in $\partial_e\Omega_1\cap \partial_e\Omega_2$ is regular for the Dirichlet problem on $\Omega_2$, and
\item the sets $\partial_e\Omega_1\cap \partial_s\Omega_2$ and $\partial_n\Omega_1\cap \partial_{ss}\Omega_2$ are polar.
\end{enumerate} If $E\subset\RR^{n+1}$ is a Borel set and $(X_1,t_1)\in \Omega_1$, then
\begin{equation}\label{StrongMarkov}
    \omega_{\Omega_2}^{X_1,t_1}(E)=\omega_{\Omega_1}^{X_1,t_1}(E\cap\partial_e\Omega_2)+ \int_{(\partial_e \Omega_1)\cap \Omega_2} \omega_{\Omega_2}^{Z,\tau}(E)\, d\omega_{\Omega_1}^{X_1,t_1}(Z,\tau).
\end{equation}
\end{theorem}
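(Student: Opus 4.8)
The plan is to establish \eqref{StrongMarkov} first when the Borel set $E$ is replaced by a continuous function on $\partial_e\Omega_2$, by a maximum-principle comparison, and then to upgrade to arbitrary $E$ by a routine measure-theoretic argument. Fix the pole $(X_1,t_1)\in\Omega_1$. Since $\Omega_1\subset\Omega_2$ forces $\partial\Omega_1\subset\overline{\Omega_1}\subset\overline{\Omega_2}=\Omega_2\cup\partial\Omega_2$, every point of $\partial_e\Omega_1$ lies in exactly one of the sets $\partial_e\Omega_1\cap\Omega_2$, $\partial_e\Omega_1\cap\partial_e\Omega_2$, or $\partial_e\Omega_1\cap\partial_s\Omega_2$, the last of which is polar by hypothesis (iii), hence $\omega^{X_1,t_1}_{\Omega_1}$-null because polar sets carry no caloric measure. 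Fix $f\in C(\partial_e\Omega_2)$, which is bounded, set $u:=H^f_{\Omega_2}$ --- a bounded temperature on $\Omega_2$, in particular on $\Omega_1$ --- and define a bounded Borel function $\widetilde f$ on $\partial_e\Omega_1$ by $\widetilde f:=f$ on $\partial_e\Omega_1\cap\partial_e\Omega_2$, $\widetilde f:=u$ on $\partial_e\Omega_1\cap\Omega_2$, and $\widetilde f:=0$ on the polar remainder. By Theorem \ref{parabolic}, $\widetilde f\in L^1(\omega^{X,t}_{\Omega_1})$ for every $(X,t)\in\Omega_1$, so $\widetilde f$ is resolutive on $\Omega_1$ and $H^{\widetilde f}_{\Omega_1}(X,t)=\int\widetilde f\,d\omega^{X,t}_{\Omega_1}$ throughout $\Omega_1$.

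The crux is the identity $u=H^{\widetilde f}_{\Omega_1}$ on $\Omega_1$, and this is where the structural hypotheses (i)--(iii) are invoked. To prove $u\leq U^{\widetilde f}_{\Omega_1}$, fix $\phi$ in the upper class $\mathfrak U^{\widetilde f}_{\Omega_1}$; then $u-\phi$ is a hypotemperature on $\Omega_1$ that is bounded above, since $|u|\leq\|f\|_{\infty}$ and $\phi$ is bounded below. By the maximum principle (Theorem \ref{maximum principle}) it suffices to check that $\limsup_{(X,t)\to(Z,\tau)}(u-\phi)(X,t)\leq 0$ for every $(Z,\tau)\in\partial_e\Omega_1$ outside a single polar set, the approach being unrestricted if $(Z,\tau)\in\partial_n\Omega_1$ and taken through $t>\tau$ if $(Z,\tau)\in\partial_{ss}\Omega_1$. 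If $(Z,\tau)\in\partial_e\Omega_1\cap\Omega_2$, then $u$ is continuous at $(Z,\tau)$ with value $\widetilde f(Z,\tau)$ and $\liminf\phi\geq\widetilde f(Z,\tau)$ along the relevant approach, so the bound holds. If $(Z,\tau)\in\partial_e\Omega_1\cap\partial_e\Omega_2$ and $(Z,\tau)$ is regular for the Dirichlet problem on $\Omega_2$ --- which holds off a polar set by hypothesis (ii) --- then $u\to f(Z,\tau)=\widetilde f(Z,\tau)$ along every approach when $(Z,\tau)\in\partial_n\Omega_2$ and along $t>\tau$ when $(Z,\tau)\in\partial_{ss}\Omega_2$; this matches the approach required by the classification of $(Z,\tau)$ with respect to $\Omega_1$ \emph{unless} $(Z,\tau)\in\partial_n\Omega_1\cap\partial_{ss}\Omega_2$, which is polar by hypothesis (iii). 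Hence, away from the polar union of $\partial_e\Omega_1\cap\partial_s\Omega_2$, the $\Omega_2$-irregular points of $\partial_e\Omega_1\cap\partial_e\Omega_2$, and $\partial_n\Omega_1\cap\partial_{ss}\Omega_2$, the hypotheses of Theorem \ref{maximum principle} hold with $A=0$, giving $u\leq\phi$ on $\Omega_1$; taking the infimum over $\phi$ yields $u\leq U^{\widetilde f}_{\Omega_1}$. The mirror-image argument applied to $\psi-u$ with $\psi\in\mathfrak L^{\widetilde f}_{\Omega_1}$ gives $u\geq L^{\widetilde f}_{\Omega_1}$, and resolutivity of $\widetilde f$ forces $u=H^{\widetilde f}_{\Omega_1}$ on $\Omega_1$.

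Evaluating this identity at $(X_1,t_1)$ and using that $\omega^{X_1,t_1}_{\Omega_1}$ is carried by $\partial_e\Omega_1$ together with $\int f\,d\omega^{Z,\tau}_{\Omega_2}=u(Z,\tau)=\widetilde f(Z,\tau)$ for $(Z,\tau)\in\partial_e\Omega_1\cap\Omega_2$,
\begin{align*}
\int f\,d\omega^{X_1,t_1}_{\Omega_2}&=u(X_1,t_1)=\int\widetilde f\,d\omega^{X_1,t_1}_{\Omega_1}\\
&=\int_{\partial_e\Omega_1\cap\partial_e\Omega_2}f\,d\omega^{X_1,t_1}_{\Omega_1}+\int_{(\partial_e\Omega_1)\cap\Omega_2}\Bigl(\int f\,d\omega^{Z,\tau}_{\Omega_2}\Bigr)\,d\omega^{X_1,t_1}_{\Omega_1}(Z,\tau),
\end{align*}
which is \eqref{StrongMarkov} tested against $f$, because the first integral equals $\int f\,d(\omega^{X_1,t_1}_{\Omega_1}\res\partial_e\Omega_2)$ and $\omega^{X_1,t_1}_{\Omega_1}(E\cap\partial_e\Omega_2)=(\omega^{X_1,t_1}_{\Omega_1}\res\partial_e\Omega_2)(E)$. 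To pass to an arbitrary Borel set $E$, a functional monotone class argument shows that $(Z,\tau)\mapsto\int g\,d\omega^{Z,\tau}_{\Omega_2}$ is Borel on $\Omega_2$ for every bounded Borel $g$ --- it is the temperature $H^g_{\Omega_2}$ when $g$ is continuous, and the class of such $g$ is closed under bounded monotone limits --- so $(Z,\tau)\mapsto\omega^{Z,\tau}_{\Omega_2}(E)$ is Borel and, by monotone convergence, the right-hand side of \eqref{StrongMarkov} defines a finite Borel measure in $E$. Both sides of \eqref{StrongMarkov} are then finite Borel measures on $\partial_e\Omega_2$ that agree on $C(\partial_e\Omega_2)$, hence they coincide.

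I expect the main obstacle to be the identity $u=H^{\widetilde f}_{\Omega_1}$: one must show that the temperature $u$, designed to solve the Dirichlet problem on the larger domain $\Omega_2$, also solves it on $\Omega_1$ with the restricted data $\widetilde f$. The delicate point is reconciling the two boundary classifications --- a point that is semi-singular for $\Omega_2$, so that $u$ is controlled only as $t\downarrow\tau$, may be normal for $\Omega_1$, where convergence along all approaches is demanded, and on $\partial_s\Omega_2$ no boundary data is imposed on $u$ at all. Hypotheses (ii) and (iii) are tuned exactly so that the boundary points where the reconciliation could fail --- $\partial_n\Omega_1\cap\partial_{ss}\Omega_2$ and $\partial_e\Omega_1\cap\partial_s\Omega_2$ by (iii), and the $\Omega_2$-irregular points of $\partial_e\Omega_1\cap\partial_e\Omega_2$ by (ii) --- together form a polar set, which the maximum principle is permitted to discard.
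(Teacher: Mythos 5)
Your proof is correct, and it differs from the paper's in a way worth noting. The paper fixes $\phi_j\in C_c(\partial_e\Omega_2)$ decreasing to $\chi_E$ for compact $E$, sets $u_j=H^{\phi_j}_{\Omega_2}$, extends it to a function $\overline u_j$ on $\partial_e\Omega_1$, forms the \emph{second} temperature $v_j=\int \overline u_j\,d\omega^{\cdot}_{\Omega_1}$ on $\Omega_1$, and applies the maximum principle to $\pm(u_j|_{\Omega_1}-v_j)$ to force $u_j=v_j$. To control the boundary behavior of $v_j$ in that comparison the paper invokes hypothesis~(i) through \cite[Corollary 8.45]{Watson}. You instead never build $v_j$: you compare $u=H^f_{\Omega_2}$ directly with members $\phi\in\mathfrak U^{\widetilde f}_{\Omega_1}$ and $\psi\in\mathfrak L^{\widetilde f}_{\Omega_1}$, applying the maximum principle to $u-\phi$ and $\psi-u$, and conclude $u=H^{\widetilde f}_{\Omega_1}$ from resolutivity of the bounded Borel function $\widetilde f$ (Theorem~\ref{parabolic}). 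Since the boundary $\liminf$/$\limsup$ control on $\phi$ and $\psi$ is built into the definitions \eqref{e:upper}--\eqref{e:lower}, you only need to govern the boundary behavior of $u$ itself, which is covered by hypotheses (ii) and (iii); hypothesis (i) is never used. That is a genuine economy over the paper's argument. Your case analysis at $\partial_e\Omega_1\cap\partial_e\Omega_2$ is also the right one: the only clash between the approach region dictated by $\Omega_1$ (all directions at $\partial_n\Omega_1$) and the one furnished by $\Omega_2$-regularity (future only at $\partial_{ss}\Omega_2$) is on $\partial_n\Omega_1\cap\partial_{ss}\Omega_2$, polar by (iii), and the uncontrolled $\partial_e\Omega_1\cap\partial_s\Omega_2$ is also polar by (iii). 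Your final passage from $f\in C(\partial_e\Omega_2)$ to Borel $E$ --- via Borel measurability of $(Z,\tau)\mapsto\omega^{Z,\tau}_{\Omega_2}(E)$ and coincidence of two finite Borel measures on the compact set $\partial_e\Omega_2$ that integrate every continuous function alike --- is sound, and replaces the paper's reduction to compact $E$ by inner regularity followed by monotone/dominated convergence along $\phi_j\downarrow\chi_E$; both routes land in the same place.
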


\begin{proof} We model our argument on the analytic proof of the strong Markov property for elliptic measures presented in \cite[Lemma B.1]{AAM19}. By the inner regularity of Radon measures on their measurable sets and separability of $\RR^{n+1}$, it suffices to prove \eqref{StrongMarkov} for compact sets in $\partial_e\Omega_2$. (See the argument at the end of the proof in \cite{AAM19} for details.)

\begin{figure}\begin{center}\includegraphics[width=.7\textwidth]{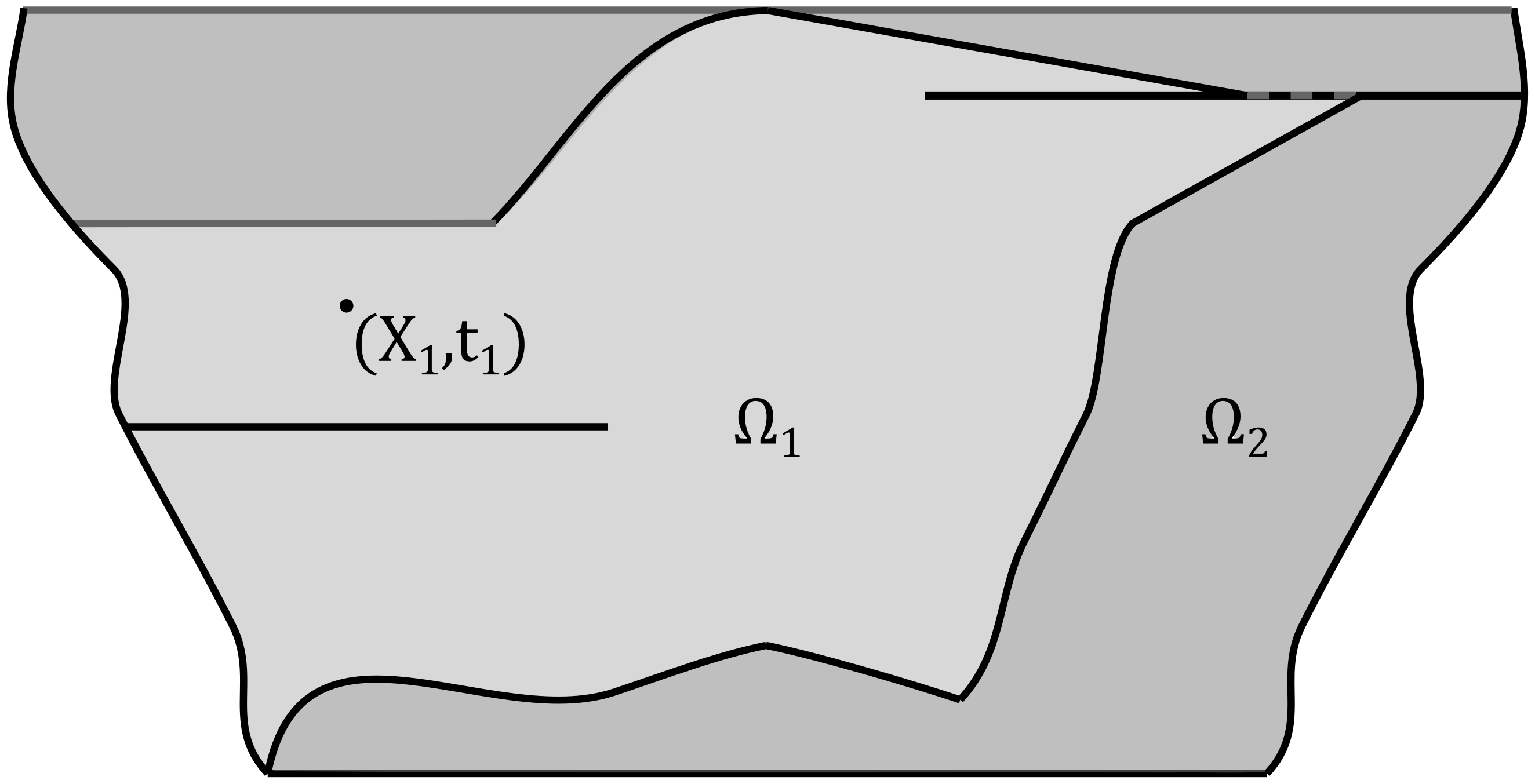}\end{center}
\caption{Nested domains $\Omega_1\subset\Omega_2$ in $\RR^1\times\RR$ with essential boundaries indicated by black lines and singular boundaries by gray lines. At the top, we illustrate how $\partial_e\Omega_1\cap \partial_s\Omega_2\neq\emptyset$ and $\partial_s\Omega_1\cap\partial_{e}\Omega_2\neq 0$ are both possible. The pole $(X_1,t_1)$ lies in both domains. } \label{fig:markov}
\end{figure}

Let $E\subset \partial_e\Omega_2$ be compact. Then we may choose a sequence $\phi_j\in C_c(\partial_e\Omega_2)$ of real-valued continuous functions with compact support such that $0\leq \phi_j\leq 1$ and $\phi_j(X,t)\downarrow \chi_E(X,t)$ for all $(X,t)\in \partial_e\Omega_2$. Consider the temperatures $u_j$ in $\Omega_2$ defined by
$$u_j(X,t)=\int_{\partial_e\Omega_2}\phi_j\, d\omega_{\Omega_2}^{X,t}\quad\text{for all }(X,t)\in\Omega_2.$$
To proceed, let us partition $$\partial_e\Omega_1=(\partial_e\Omega_1\cap \Omega_2)\cup(\partial_e\Omega_1\cap\partial_e\Omega_2)\cup(\partial_e\Omega_1\cap\partial_s\Omega_2)=:A_1\cup B_1\cup C_1.$$ In Figure \ref{fig:markov}, we illustrate an example of nested domains where $A_1$, $B_1$, and $C_1$ are each nonempty. Let $\overline{u}_j$ denote an extension of $u_j$ to $\Omega_2\cup B_1\cup C_1$ defined by setting
$$\overline{u}_j=\left\{ \begin{array}{ll}u_j\quad &\text{in }\Omega_2 \\
\phi_j\quad &\text{in }B_1,\\
\,0\quad  &\text{in }C_1.
\end{array} \right.$$
Then $\overline{u}_j$ is continuous everywhere in $\Omega_2$ (hence on $A_1$), simply because $u_j$ is a temperature. In addition, $\overline{u}_j$ is continuous at nearly every point in $B_1$ by (ii) and continuity of $\phi_j$, where as usual if $(X_0,t_0)\in B_1\cap\partial_{ss}\Omega_2$, then continuity of $\overline{u}_j$ at $(X_0,t_0)$ means on approach by points $(X,t)$ with $t>t_0$. Unfortunately, it could be that $(X_0,t_0)\in \partial_n\Omega_1\cap\partial_{ss}\Omega_2$ (e.g.~imagine $\partial_n\Omega_1$ includes a segment $\{X_0\}\times[t_0-\epsilon,t_0)$ in $\Omega_2$ that is backwards-in-time from $(X_0,t_0)\in\partial_{ss}\Omega_2$), in which case we have no way to guarantee that $\overline{u}_j|_{\Omega_1}$ is continuous at $(X_0,t_0)$ when approached from the past. We deal with this possibility in assumption (iii) by requiring that $\partial_n\Omega_1\cap\partial_{ss}\Omega_2$ be a polar set. We do not worry about continuity of the extension $\overline{u}_j$ at points in $C_1$, since anyways $C_1$ is also polar by (iii). As $\overline{u}_j$ is defined on all of $\partial_e\Omega_1$, we may define a temperature $v_j$ on $\Omega_1$ by assigning
$$v_j(X,t)=\int_{\partial_e\Omega_1} \overline{u}_j\,d\omega_{\Omega_1}^{X,t}\quad\text{for all }(X,t)\in\Omega_1.$$ Observe that (see \cite[Corollary 8.45]{Watson}) \begin{equation}\label{v-limit}\lim_{(X,t)\rightarrow (X_0,t_0)}v_j(X,t)=\overline{u}_j(X_0,t_0)\end{equation} at nearly every point of continuity of $\overline{u}_j|_{\partial_e\Omega_1}$ by (i), with the usual interpretation of the limit at semi-singular boundary points. Thus, as we already noted that $\overline{u}_j$ is nearly everywhere continuous, \eqref{v-limit} holds at nearly every $(X_0,t_0)\in\partial_e\Omega_1$.

Now, the functions $u_j$, $\overline{u}_j$, and $v_j$ are bounded, taking values between $0$ and $1$ at all points where they are defined, because $0\leq \phi_j\leq 1$ and caloric measures are probabilities. Hence $\pm(u_j|_{\Omega_1}-v_j)$ are (hypo) temperatures in $\Omega_1$ satisfying \eqref{max-1}. At nearly every point $(X_0,t_0)\in\partial_e\Omega_1$, with the usual interpretation when $(X_0,t_0)\in\partial_{ss}\Omega_1$, we also have $$\lim_{(X,t)\rightarrow (X_0,t_0)} \pm(u_j|_{\Omega_1}(X,t)-v_j(X,t)) =\pm(\overline{u}_j(X_0,t_0)-\overline{u}_j(X_0,t_0))=0,$$ where we used $u_j(X,t)=\overline{u}_j(X,t)$ for all $(X,t)\in\Omega_1$, because $\Omega_1\subset\Omega_2$. By the maximum principle (see Theorem \ref{maximum principle}), we conclude that $u_j(X,t)=v_j(X,t)$ at every $(X,t)\in\Omega_1$.
Therefore, for every $(X,t)\in\Omega_1$, \begin{equation}\label{abc1} \int_{\partial_e\Omega_2}\phi_j\,d\omega^{X,t}_{\Omega_2}
= \int_{A_1\cup B_1\cup C_1} \overline{u}_j\,d\omega^{X,t}_{\Omega_1}=\int_{A_1} u_j\,d\omega^{X,t}_{\Omega_1}+\int_{B_1} \phi_j\,d\omega^{X,t}_{\Omega_1}.\end{equation} Note that $\int_{C_1} \overline{u}_j\,d\omega^{X,t}_{\Omega_1}$ vanished, because $C_1$ is polar.

To conclude, evaluate \eqref{abc1} at a fixed point $(X_1,t_1)\in\Omega_1$ and send $j\rightarrow\infty$. Recall that $\phi_j\downarrow \chi_E$ pointwise and $0\leq \phi_j\leq 1$. By either the monotone or dominated convergence theorem, the left hand side of \eqref{abc1} becomes $$\int_{\partial_e \Omega_2} \chi_E\,d\omega^{X_1,t_1}_{\Omega_2}=\omega^{X_1,t_1}_{\Omega_2}(E).$$ Similarly, taking into account the fact that caloric measures on $\Omega_1$ are supported in $\partial_e\Omega_1$ and $E\subset\partial_e\Omega_2$, the second term on the right hand side of \eqref{abc1} becomes $\omega^{X_1,t_1}_{\Omega_1}(E)$. Finally, send $j\rightarrow\infty$ in the definition of $u_j(Z,\tau)$ to get $\lim_{j\rightarrow\infty} u_j(Z,\tau)=\omega^{Z,\tau}_{\Omega_2}(E)$ pointwise. By one more application of the dominated convergence theorem, the first term on the right hand side of \eqref{abc1} transforms into $$\int_{A_1} \omega^{Z,\tau}_{\Omega_2}(E)\,d\omega^{X_1,t_1}_{\Omega_1}.$$ This verifies \eqref{StrongMarkov} for compact sets $E\subset\partial_e\Omega_2$. \end{proof}

Extending the notion of a parabolic cube, a \emph{(closed) parabolic rectangle} in $\RR^{n+1}$ is any set $H$ of the form $H=[X_1,X_1+s_1]\times\cdots[X_n,X_n+s_n]\times[X_{n+1},X_{n+1}+s_{n+1}^2]$; we call the numbers $s_1,\dots,s_n,s_{n+1}>0$ the \emph{side lengths} of $H$. Iterating the strong Markov property, we obtain the following estimate, which we will use in the proof of Lemma \ref{lemma2}.

\begin{corollary}[caloric measure of nested rectangles] \label{c-nested} Let $\Omega\subsetneq\RR^{n+1}$ be a domain and suppose nearly every point of $\partial_e\Omega$ is regular for the Dirichlet problem for the heat equation. Let $H_1,\dots,H_k$ be parabolic rectangles in $\RR^{n+1}$ that are strictly nested in the sense that $$H_k\subset\interior{H_{k-1}},\quad H_{k-1}\subset\interior H_{k-2},\quad \dots,\quad  H_2\subset\interior{H_1}.$$ Write $G_i'=\Omega\cap\partial_e(\Omega\setminus H_i)\subset \partial H_i=G_i$ for each $i$. If $(X,t)\in \Omega\setminus H_1$, then $$\omega^{X,t}_\Omega(H_k)\leq \omega^{X,t}_{\Omega\setminus H_1}(G_1')\left(\sup_{(X_1,t_1)\in G_1'} \omega^{X_1,t_1}_{\Omega\setminus H_2}(G_2')\right)
\cdots\left(\sup_{(X_{k-1},t_{k-1})\in G_{k-1}'}\omega^{X_{k-1},t_{k-1}}_{\Omega\setminus H_{k-1}}(G_k)\right).$$ (Except for the final instance $G_k$, all instances of a `\,$G$' in the formula are $G_i'$.)\end{corollary}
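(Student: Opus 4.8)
The plan is to peel off the rectangles one at a time, applying the strong Markov property (Theorem~\ref{t:strong-markov}) to the nested pair $(\Omega\setminus H_i,\Omega)$ for $i=1,\dots,k$ and letting the pole migrate from $(X,t)$ into $G_1'$, then into $G_2'$, and so on. Put $G_0'=\{(X,t)\}$, fix $1\le i\le k$, and take any $(Z,\tau)\in G_{i-1}'$; since $G_{i-1}'\subset\Omega\cap\partial H_{i-1}$ and $H_i\subset\interior H_{i-1}$, the point $(Z,\tau)$ lies in $\Omega\setminus H_i$. Applying Theorem~\ref{t:strong-markov} with $\Omega_1=\Omega\setminus H_i$, $\Omega_2=\Omega$, and $E=H_k$ gives
\begin{equation*}
\omega_\Omega^{Z,\tau}(H_k)=\omega_{\Omega\setminus H_i}^{Z,\tau}\bigl(H_k\cap\partial_e\Omega\bigr)+\int_{G_i'}\omega_\Omega^{Z',\tau'}(H_k)\,d\omega_{\Omega\setminus H_i}^{Z,\tau}(Z',\tau'),
\end{equation*}
where $\partial_e(\Omega\setminus H_i)\cap\Omega=G_i'$ by definition. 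For $i<k$ the first summand is $0$: the measure $\omega_{\Omega\setminus H_i}^{Z,\tau}$ is carried by $\partial_e(\Omega\setminus H_i)$ (Theorem~\ref{parabolic}), which is disjoint from $\interior H_i$ because $\overline{\Omega\setminus H_i}$ is, while $H_k\subset\interior H_{k-1}\subset\dots\subset\interior H_i$. Bounding the integrand by its supremum over $G_i'$ therefore yields
\begin{equation*}
\omega_\Omega^{Z,\tau}(H_k)\le\omega_{\Omega\setminus H_i}^{Z,\tau}(G_i')\cdot\sup_{(Z',\tau')\in G_i'}\omega_\Omega^{Z',\tau'}(H_k)\qquad(i<k,\ (Z,\tau)\in G_{i-1}').
\end{equation*}

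For $i=k$ the first summand need not vanish, but it equals $\omega_{\Omega\setminus H_k}^{Z,\tau}\bigl(H_k\cap\partial_e\Omega\cap\partial_e(\Omega\setminus H_k)\bigr)$, and $H_k\cap\partial_e(\Omega\setminus H_k)\subset\partial H_k$ (again since $\overline{\Omega\setminus H_k}$ avoids $\interior H_k$), whereas $H_k\cap\partial_e\Omega\subset\partial\Omega$ is disjoint from $G_k'=\Omega\cap\partial_e(\Omega\setminus H_k)$. As the integral over $G_k'$ is at most $\omega_{\Omega\setminus H_k}^{Z,\tau}(G_k')$, adding the two contributions (carried by disjoint subsets of $\partial H_k$) gives
\begin{equation*}
\omega_\Omega^{Z,\tau}(H_k)\le\omega_{\Omega\setminus H_k}^{Z,\tau}(\partial H_k)=\omega_{\Omega\setminus H_k}^{Z,\tau}(G_k)\qquad\bigl((Z,\tau)\in G_{k-1}'\bigr).
\end{equation*}
This is the one place where the full face $G_k$, not $G_k'$, is forced. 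Composing the $k$ estimates, starting from $(X,t)\in G_0'$ and factoring out the suprema (each inner supremum being constant in the outer variable), produces the asserted product bound. (If $\Omega\setminus H_i$ is disconnected, read it throughout as the component containing the pole in play; this only shrinks the domain and so is harmless for an upper bound.)

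The real content is not this chaining but the verification that Theorem~\ref{t:strong-markov} may be invoked for each pair $(\Omega\setminus H_i,\Omega)$, i.e.\ that its hypotheses (i)--(iii) hold; I expect this to be the main obstacle. Hypothesis (ii) is immediate, since $\partial_e(\Omega\setminus H_i)\cap\partial_e\Omega\subset\partial_e\Omega$ and nearly every point of $\partial_e\Omega$ is regular for $\Omega$ by assumption. For (i) one decomposes $\partial_e(\Omega\setminus H_i)$ into: the part on $\partial_e\Omega$, which inherits regularity for the smaller domain $\Omega\setminus H_i$ by monotonicity of thermal regularity under passage to a subdomain (the thermal Wiener criterion; see \cite[\S8.5]{Watson}, \cite{Lanconelli-Wiener,EG-Wiener}), off the polar exceptional set furnished by the hypothesis; the part on $\partial_s\Omega$, which is polar by the same analysis as for (iii); and the new part on the faces of the parabolic rectangle $H_i$, where a short check against Definition~\ref{parabolicb} shows that the lateral and top faces are normal boundary points of $\Omega\setminus H_i$ satisfying an exterior barrier condition (hence regular), the interior of the bottom face where it meets $\Omega$ consists of singular points (so is irrelevant to (i)), and the bottom-face edges are semi-singular and again regular. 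Hypothesis (iii) is the delicate point: one uses \cite[Theorem 8.40]{Watson} to place $\partial_s\Omega$, $\partial_{ss}\Omega$, and their analogues for $\Omega\setminus H_i$ inside $\RR^n\times T$ for a countable set $T$, and then uses the geometry of $H_i$---a point of $\partial_s\Omega$ enters $\partial_e(\Omega\setminus H_i)$, and does so in a way that genuinely obstructs the maximum-principle step in the proof of Theorem~\ref{t:strong-markov}, only where it sits over the lateral boundary of the space-rectangle of $H_i$---to confine the relevant sets to a countable union of slices of parabolic Hausdorff dimension at most $n-1$, which are polar by the theorem of Taylor and Watson \cite{TW85} recalled in \S\ref{sec:heat}. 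Here the restriction that each $H_i$ be a parabolic \emph{rectangle}, rather than an arbitrary obstacle, is used essentially.
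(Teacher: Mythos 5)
Your argument is correct and is essentially the same as the paper's: both peel off the rectangles one at a time via the strong Markov property (Theorem~\ref{t:strong-markov}), bounding the integral term by its supremum at each stage, with the key observation that the first summand in \eqref{StrongMarkov} vanishes for $i<k$ because $\omega^{Z,\tau}_{\Omega\setminus H_i}$ is carried by $\partial_e(\Omega\setminus H_i)$, which is disjoint from $\interior H_i\supset H_k$. Two small differences: the paper packages the iteration as an induction on $k$ and handles the base case (your $i=k$ step) by a direct maximum-principle comparison $\omega^{\cdot}_\Omega(H_k)\leq\omega^{\cdot}_{\Omega\setminus H_k}(G_k)$ rather than your strong-Markov-plus-decomposition argument (both are valid), and the paper's verification of the hypotheses of Theorem~\ref{t:strong-markov} is confined to one short preliminary paragraph (regularity via monotonicity and \cite[Theorem 8.49]{Watson}), so your expectation that this verification is ``the main obstacle'' overstates the paper's treatment; incidentally, your claim that the exceptional sets for hypothesis (iii) live in slices of dimension at most $n-1$ deserves a closer look, since $\partial_n(\Omega\setminus H_i)\cap\partial_{ss}\Omega$ can in principle include the interior of the top time-face of $H_i$ (dimension $n$) when that face happens to sit at one of the countably many times carrying $\partial_{ss}\Omega$---a subtlety the paper also leaves implicit.
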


\begin{proof} Let us mention once and for all that removing a rectangle $H$ from a domain $\Omega$ enlarges the complement of the domain. In particular, $\partial_{ss}(\Omega\setminus H)\subset \partial_{ss}\Omega$ and every regular point of $\partial_e\Omega$ that belongs to $\partial_e(\Omega\setminus H)$ is also a regular point of $\partial_e(\Omega\setminus H)$. Moreover, every point in $\partial_e(\Omega\setminus H)\cap\partial H$ is regular; e.g.~use \cite[Theorem 8.49]{Watson} to check regularity of any points in $\partial_e(\Omega\setminus H)\cap \partial H$ that do not belong to $\partial_e\Omega$.

We use induction on the number of rectangles. For the base case, suppose that $k=1$. Suppose that nearly every point in $\partial_e\Omega$ is regular for the Dirichlet problem on $\Omega$. Hence nearly every point in $\partial_e(\Omega\setminus H_1)$ is regular for the Dirichlet problem on $\Omega\setminus H_1$, as well. Comparing boundary values, one may check that $\lim_{(X,t)\rightarrow (Y,s)}(\omega^{X,t}_\Omega(H_k)-\omega^{X,t}_{\Omega\setminus H_1}(G_1))\leq 0$ at nearly every $(Y,s)\in\partial_e(\Omega\setminus H_1)$, with the standard proviso that $t>s$ when $(Y,s)\in\partial_{ss}\Omega$. Thus, by the maximum principle, $\omega^{X,t}_{\Omega}(H_k) \leq \omega^{X,t}_{\Omega\setminus H_1}(G_1)$ for all $(X,t)\in \Omega\setminus H_1$.

Suppose that the corollary holds for some $k\geq 1$. Let $H_1,\cdots, H_{k+1}$ be rectangles with $H_{j+1}\subset \interior{H_{j}}$ for all $1\leq j\leq k$ and fix $(X,t)\in\Omega\setminus H_1$. Note that every $(X_1,t_1)\in G_1'$ lies outside of $H_2$. Thus, the inductive hypothesis applied with $H_2,\cdots, H_{k+1}$ guarantees that $$\omega^{X_1,t_1}_\Omega(H_{k+1})\leq \omega^{X_1,t_1}_{\Omega\setminus H_2}(G_2')\left(\sup_{(X_2,t_2)\in G_2'} \omega^{X_2,t_2}_{\Omega\setminus H_3}(G_3')\right)
\cdots\left(\sup_{(X_{k},t_{k})\in G_{k}'}\omega^{X_{k},t_{k}}_{\Omega\setminus H_{k+1}}(G_{k+1})\right).$$ (When $k=1$, this formula should be read as $\omega^{X_1,t_1}_\Omega(H_2) \leq \omega^{X,t}_{\Omega\setminus H_2}(G_2).$) Since $\Omega\setminus H_1\subset \Omega$, the strong Markov property ensures that \begin{align*}
\omega^{X,t}_\Omega(H_{k+1}) &= \omega^{X,t}_{\Omega\setminus H_1}(H_{k+1}\cap\partial_e\Omega)
   + \int_{\Omega\cap \partial_e(\Omega\setminus H_1)} \omega^{X_1,t_1}_{\Omega}(H_{k+1})\,d\omega^{X,t}_{\Omega\setminus H_1}(X_1,t_1)\\
&= \int_{G_1'} \omega^{X_1,t_1}_{\Omega}(H_{k+1})\,d\omega^{X,t}_{\Omega\setminus H_1}(X_1,t_1)
\leq \omega^{X,t}_{\Omega\setminus H_1}(G_1')\sup_{(X_1,t_1)\in G_1'} \omega^{X_1,t_1}_{\Omega}(H_{k+1}),\end{align*} where $\omega^{X,t}_{\Omega\setminus H_1}(H_{k+1}\cap\partial_e\Omega)=0$ trivially, since $H_{k+1}$ is contained in the exterior of $\Omega\setminus H_1$. Combining the two displayed equations yields the desired inequality for $H_1,\dots,H_{k+1}$. This completes the induction step.\end{proof}

\section{Lower dimension bound} \label{sec:lower-bound}

The \emph{fundamental temperature} (also known as the \emph{heat kernel}) on $\RR^{n+1}$ is the function $W(X,t):\RR^n\times\RR\rightarrow[0,\infty)$ defined by $$W(X,t)=(4\pi t)^{-n/2}\exp(-|X|^2/4t)\chi_{\{t>0\}}.$$ It is a solution to the heat equation on $\RR^{n+1}\setminus\{0\}$, is a supersolution of the heat equation on $\RR^{n+1}$, and satisfies $\int_{\RR^n} W(X-Y,t-s)\,dY=1$ for all $X\in\RR^n$ and $t,s\in\RR$ with $t>s$. To make effective estimates of caloric measure on arbitrary domains in this section and the next, we need to understand the phase portrait for the fundamental temperature.

\begin{lemma}[portrait of $W(|X|,t)$]\label{fundprop} Let $\phi:[0,\infty)\times(0,\infty)\rightarrow[0,\infty)$ be defined by $$\phi(r,t)=(4\pi t)^{-n/2}\exp(-r^2/4t).$$ \begin{itemize}
\item \emph{horizontal traces:} For all $t>0$, the function $r\mapsto \phi(r,t)$ is strictly decreasing, $\phi(0,t)=(4\pi t)^{-n/2}$, and $\lim_{r\rightarrow \infty} \phi(r,t)=0$.
\item \emph{vertical traces:} The function $t\mapsto \phi(0,t)=(4\pi t)^{-n/2}$ is strictly decreasing, $\lim_{t\rightarrow 0+}\phi(0,t)=\infty$, and $\lim_{t\rightarrow\infty} \phi(0,t)=0$.
\item For all $r>0$, the function $t\mapsto \phi(r,t)$ increases on $(0,(1/2n)r^2]$, decreases on $[(1/2n)r^2,\infty)$, $\lim_{t\rightarrow 0+}\phi(r,t)=\lim_{t\rightarrow\infty}\phi(r,t)=0$, and has maximum value $$\max_{t>0} \phi(r,t) = \phi(r,(1/2n)r^2)=(n/2\pi)^{n/2}\exp(-n/2)r^{-n}=C_{n}r^{-n},$$ where $C_n\rightarrow\infty$ superexponentially as $n\rightarrow\infty$, although $C_n>C_{n+1}$ for $1\leq n\leq 5$.
\item \emph{parabolic traces:} For all $\lambda>0$, $\phi(r,\lambda r^2)=C(\lambda,n)r^{-n}$, where as a function of $\lambda$ the constant $C(\lambda,n)$ increases on $(0,1/2n]$ and decreases on $[1/2n,\infty)$.
\item \emph{parabolic dilation:} For all $\lambda>0$, $\phi(\lambda r,\lambda^2t)=\lambda^{-n}\phi(r,t)$.
\end{itemize}
\end{lemma}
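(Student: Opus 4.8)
The plan is to reduce every assertion to an elementary one-variable calculus computation via logarithmic differentiation, after first recording the scaling identity, from which two of the items fall out immediately. A direct substitution gives $\phi(\lambda r,\lambda^2 t) = (4\pi\lambda^2 t)^{-n/2}\exp\big(-\lambda^2 r^2/(4\lambda^2 t)\big) = \lambda^{-n}\phi(r,t)$, which is the parabolic dilation identity; taking $\lambda=1/r$ for $r>0$ then gives $\phi(r,\lambda r^2)=r^{-n}\phi(1,\lambda)$, so $C(\lambda,n)=\phi(1,\lambda)$, and the claimed monotonicity of $C(\cdot,n)$ in $\lambda$ is exactly the monotonicity in $t$ of $\phi(1,t)$, i.e.\ a special case of the third bullet treated below.

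For the horizontal and vertical traces: with $t>0$ fixed, $\partial_r\phi(r,t)=-(r/2t)\,\phi(r,t)<0$ for $r>0$, while $\phi(0,t)=(4\pi t)^{-n/2}$ and $\phi(r,t)\to 0$ as $r\to\infty$ are read off directly from the formula; and $t\mapsto(4\pi t)^{-n/2}$ is a negative power of $t$, hence strictly decreasing with limits $\infty$ at $0^+$ and $0$ at $\infty$. The main computation is the vertical behavior for fixed $r>0$: from $\log\phi(r,t)=-\tfrac n2\log(4\pi t)-r^2/(4t)$ we obtain $\partial_t\log\phi(r,t)=\tfrac{1}{4t^2}\big(r^2-2nt\big)$, positive for $0<t<r^2/(2n)$ and negative for $t>r^2/(2n)$; hence $t\mapsto\phi(r,t)$ increases then decreases with its maximum at $t=r^2/(2n)$, and substituting this value yields $\phi\big(r,r^2/(2n)\big)=(n/2\pi)^{n/2}e^{-n/2}r^{-n}=C_n r^{-n}$. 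Both one-sided limits are $0$: as $t\downarrow 0$ the factor $\exp(-r^2/(4t))$ decays faster than $(4\pi t)^{-n/2}$ blows up, and as $t\to\infty$ the power $(4\pi t)^{-n/2}$ vanishes while the exponential tends to $1$.

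It remains to handle the qualitative claims about $C_n=(n/2\pi e)^{n/2}$. Since $\log C_n=\tfrac n2(\log n-\log 2\pi e)$ grows faster than linearly in $n$ (like $\tfrac n2\log n$), $C_n\to\infty$ superexponentially; and the assertion $C_n>C_{n+1}$ for $1\le n\le 5$ is equivalent to $(n+1)^{(n+1)/2}/\big((2\pi e)^{1/2}\,n^{n/2}\big)<1$ for those five values, a finite numerical check (which fails at $n=6$, consistent with the stated range). None of this presents a genuine obstacle: each item is the sign analysis of an explicit derivative or a one-line substitution. The only things that need care are keeping the common critical value $t=r^2/(2n)$ consistent between the third and fourth bullets — they are literally the same statement after rescaling by $r$ — and performing the small numerical verification for $C_n>C_{n+1}$ faithfully.
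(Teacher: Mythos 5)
Your argument is correct, and it matches the paper's approach: the paper dispatches this lemma with the one-line remark that it ``is an easy exercise in calculus'' (plus a computer check of the small cases of $C_n$), and you supply exactly that calculus, with the additional small economy of deriving the parabolic trace formula from the dilation identity rather than from scratch. Nothing further is needed.
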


\begin{remark} The proof of Lemma \ref{fundprop} is an easy exercise in calculus. With the aid of a computer, it can be checked that initially the constants $C_n=\sup_{\lambda>0} C(\lambda,n)$ satisfy $$.25>C_1>C_2>C_3>C_4>C_5>C_6>.04,$$ but thereafter $C_6<C_7<C_8<\cdots$.\end{remark}

On $\RR^{n+1}$, let $U_{r,s}(X,t)=U_{\RR^n}(X,r)\times(t-s,t+s)$ denote the \emph{open cylinder} with \emph{center} $(X,t)$, \emph{radius} $r$, and \emph{duration} $2s$. A special case is $U_{r,r^2}(X,t)=U((X,t),r)$, an open ball of radius $r$ in $\RR^{n+1}$ with the parabolic distance. We refer to \eqref{e:cylinder-1} as a universal estimate, because it holds on every domain $\Omega\subsetneq\RR^{n+1}$.

\begin{lemma}[universal cylinder estimate] \label{cylinder-estimate} Let $\Omega\subsetneq\RR^{n+1}$ be a domain and let $(X,t)\in\Omega$. For all $(X_0,t_0)\in\partial_e\Omega$, $r>0$, and $s>0$, \begin{equation}\label{e:cylinder-1} \omega_\Omega^{X,t}(U_{r,s}(X_0,t_0))\leq \frac{W(X-X_0,(1/2n)r^2+s+t-t_0)}{\phi(1,(1/2n)+2s/r^2)}\,r^n.\end{equation}\end{lemma}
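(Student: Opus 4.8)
The plan is to bound $\omega^{X,t}_\Omega\bigl(U_{r,s}(X_0,t_0)\bigr)$ by constructing an explicit supertemperature barrier that dominates $\chi_E$ along $\partial_e\Omega$, where $E=U_{r,s}(X_0,t_0)\cap\partial_e\Omega$, and then invoking the Perron--Wiener--Brelot description of caloric measure. Since $\chi_E$ is the indicator of a Borel subset of $\partial_e\Omega$, it is resolutive by Theorem~\ref{parabolic}, so $\omega^{X,t}_\Omega(E)=H^{\chi_E}_\Omega(X,t)=U^{\chi_E}_\Omega(X,t)=\inf\mathfrak{U}^{\chi_E}_\Omega(X,t)$; and since $\omega^{X,t}_\Omega$ is carried by $\partial_e\Omega$ (Theorem~\ref{parabolic}), the left-hand side of \eqref{e:cylinder-1} equals $\omega^{X,t}_\Omega(E)$. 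Thus it is enough to exhibit one hypertemperature $v\in\mathfrak{U}^{\chi_E}_\Omega$ whose value at the pole equals the right-hand side of \eqref{e:cylinder-1}.

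First I would set, for $(Y,\sigma)\in\RR^{n+1}$,
$$v(Y,\sigma)=\frac{r^n}{\phi\bigl(1,\ \tfrac{1}{2n}+\tfrac{2s}{r^2}\bigr)}\;W\!\bigl(Y-X_0,\ \tfrac{1}{2n}r^2+s+\sigma-t_0\bigr).$$
Up to the positive leading constant, $v$ is a translate of the fundamental temperature $W$ in space and in time (its temporal pole lies at $(X_0,\ t_0-\tfrac{1}{2n}r^2-s)$), hence $v$ is a nonnegative supertemperature on $\RR^{n+1}$, in particular a hypertemperature on $\Omega$ bounded below by $0$; so $\liminf v\geq 0=\chi_E$ at every point of $\partial_e\Omega$ outside $E$. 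The crucial step is to show $v\geq 1$ on the \emph{open} cylinder $U_{r,s}(X_0,t_0)$. For $(Y,\sigma)$ in that cylinder, $|Y-X_0|<r$ and $\sigma-t_0\in(-s,s)$, so the time argument $\tau:=\tfrac{1}{2n}r^2+s+\sigma-t_0$ lies in $\bigl(\tfrac{1}{2n}r^2,\ \tfrac{1}{2n}r^2+2s\bigr)$, which is positive and lies entirely to the right of the maximum of $t\mapsto\phi(r,t)$, so that $\phi(r,\cdot)$ is decreasing there. Using Lemma~\ref{fundprop} (strict monotonicity of $\phi$ in its first variable, then monotonicity of $\phi(r,\cdot)$ past its hump, then parabolic dilation), I would estimate
$$W(Y-X_0,\tau)=\phi(|Y-X_0|,\tau)\ \geq\ \phi(r,\tau)\ \geq\ \phi\bigl(r,\ \tfrac{1}{2n}r^2+2s\bigr)\ =\ r^{-n}\,\phi\bigl(1,\ \tfrac{1}{2n}+\tfrac{2s}{r^2}\bigr),$$
and multiplying by the leading constant of $v$ gives $v\geq 1$ on $U_{r,s}(X_0,t_0)$. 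Since that cylinder is open, each $(Y_0,\sigma_0)\in E$ has a whole neighborhood inside it, so $\liminf v\geq 1=\chi_E(Y_0,\sigma_0)$ (the ``approach from the future'' proviso at semi-singular points then holds automatically). Therefore $v\in\mathfrak{U}^{\chi_E}_\Omega$, and evaluating at $\sigma=t$,
$$\omega^{X,t}_\Omega\bigl(U_{r,s}(X_0,t_0)\bigr)=\omega^{X,t}_\Omega(E)=\inf\mathfrak{U}^{\chi_E}_\Omega(X,t)\ \leq\ v(X,t)=\frac{W\bigl(X-X_0,\ \tfrac{1}{2n}r^2+s+t-t_0\bigr)}{\phi\bigl(1,\ \tfrac{1}{2n}+\tfrac{2s}{r^2}\bigr)}\,r^n,$$
which is \eqref{e:cylinder-1}.

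The one thing that needs care is the choice of the temporal offset $\tfrac{1}{2n}r^2+s-t_0$. The extra $\tfrac{1}{2n}r^2$ beyond $s$ is what places the singularity of the shifted kernel strictly below the slab $\{t_0-s<\sigma<t_0+s\}$ and, more importantly, what forces the entire range of $\tau$ over that slab to sit on the decreasing branch of the vertical trace of $\phi$ for \emph{every} $s>0$, so that no case analysis is needed, the worst case is simply $\tau=\tfrac{1}{2n}r^2+2s$, and by parabolic dilation this telescopes cleanly against the normalizing constant. This is exactly the information the phase portrait in Lemma~\ref{fundprop} is set up to supply. The remaining ingredients (resolutivity of $\chi_E$, the identity $H^{\chi_E}_\Omega=\inf\mathfrak{U}^{\chi_E}_\Omega$, and the support of caloric measure in $\partial_e\Omega$) are furnished by Theorem~\ref{parabolic} and the Perron--Wiener--Brelot formalism of \S\ref{sec:heat}; alternatively, one could avoid that language entirely by approximating $\chi_E$ from above by functions in $C_c(\partial_e\Omega)$ and comparing the corresponding temperatures with $v$ through the maximum principle (Theorem~\ref{maximum principle}), exactly as in the proof of Theorem~\ref{t:strong-markov}.
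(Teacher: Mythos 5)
Your proof is correct and is essentially the same as the paper's: the paper also forms the barrier $v=W(\cdot-X_0,\cdot-t_{-1})/R$, picks $t_{-1}=t_0-s-\tfrac{1}{2n}r^2$ so the entire time window sits on the decreasing branch of the vertical trace, and identifies $R=\phi(r,\tfrac{1}{2n}r^2+2s)=r^{-n}\phi(1,\tfrac{1}{2n}+\tfrac{2s}{r^2})$; you have simply substituted these choices from the outset rather than leaving $t_{-1},R$ as parameters to be optimized.
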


\begin{proof} Write $E=U_{r,s}(X_0,t_0)$. The following argument may be viewed as a modification of the usual proof that caloric measure of the present and future is zero (e.g.~see \cite[Lemma 2.10]{Watson} or Example (a) on \cite[p.~329]{Doob}). The basic idea is to show that an appropriately chosen translation and dilation of the fundamental temperature belongs to the upper class $\mathfrak{U}^{f}_{\Omega}$ for $f=\chi_E$; see \eqref{e:upper}. Define $$v(X,t)=\frac{W(X-X_0,t-t_{-1})}{R}\quad\text{for all }(X,t)\in\RR^{n+1}$$ for some time $t_{-1}<t_0-s$ and number $R>0$ to be determined. As long as we choose \begin{equation}\label{R-choice1} 0<R\leq \inf_{(X,t)\in E} W(X-X_0,t-t_{-1}) =\inf_{\substack{0\leq \rho<r \\ t_0-s<\tau<t_0+s}}\phi(\rho,\tau-t_{-1})=\inf_{t_0-s< \tau< t_0+s}\phi(r,\tau-t_{-1}),\end{equation} where $\phi$ is from Lemma \ref{fundprop}, we have $$\liminf_{(X,t)\rightarrow (X_1,t_1)} v(X,t) \geq 1=\chi_E(X_1,t_1)\quad\text{for every }(X_1,t_1)\in(\partial_e\Omega)\cap E.$$ (For equality of the infima in \eqref{R-choice1}, we used $\rho\mapsto \phi(\rho,\tau)$ is decreasing for each $\tau>0$.) Also, we always have $$\liminf_{(X,t)\rightarrow (X_1,t_1)} v(X,t) \geq 0=\chi_E(X_1,t_1)\quad\text{for every }(X_1,t_1)\in(\partial_e\Omega)\setminus E,$$ simply because $v$ is nonnegative. (Luckily, because $v$ is defined on all of $\RR^{n+1}$, we do not need any special interpretation of the limit inferior when $(X_1,t_1)\in\partial_{ss}\Omega$.) Hence $v\in \mathfrak{U}^f_\Omega$. It immediately follows that $\omega^{X,t}_\Omega(E)=H^f_\Omega(X,t)\leq v(X,t)$ for all $(X,t)\in\Omega$.

Clearly, the best upper bound from this comparison is found by taking $$R=\min_{t_0-s\leq \tau\leq t_0+s} \phi(r,\tau-t_{-1})$$ and choosing $t_{-1}$ to make $R$ as large as possible. Rather than try to find the optimal value of $t_{-1}$, which would depend on $n$, $r$, and $s$ and seems hard, we use our knowledge of the vertical traces of $\phi$ in Lemma \ref{fundprop} to make a smart choice. Set $t_{-1}=t_0-s-(1/2n)r^2$, which puts $\phi(r,t_0-s-t_{-1})=\phi(r,(1/2n)r^2)=\max_{\tau>0}\phi(r,\tau)=C_nr^{-n}$ as large as possible. Since $\phi(r,\tau)$ decreases on $[(1/2n)r^2,\infty)$, it follows that with our value of $t_{-1}$, $$R=\phi(r,(1/2n)r^2+2s)=r^{-n} \phi(1,(1/2n)+2s/r^2).$$ Putting it all together, $$\omega^{X,t}_\Omega(U_{r,s}(X_0,t_0)) \leq \frac{W(X-X_0,t-t_{-1})}{R}=\frac{W(X-X_0, (1/2n)r^2+s+t-t_0)}{\phi(1,(1/2n)+2s/r^2)}r^n$$ for all $(X,t)\in\Omega$. We have arrived at \eqref{e:cylinder-1}.\end{proof}

\begin{lemma}[universal ball estimate] \label{ball-estimate} Let $\Omega\subsetneq\RR^{n+1}$ be any domain and let $(X,t)\in\Omega$. For all $(X_0,t_0)\in\partial_e\Omega$ and $0<r\leq \dist((X,t),\partial_e\Omega)$, \begin{equation}\label{e:cylinder-3} \omega_\Omega^{X,t}(U((X_0,t_0),r)) \lesssim_n \frac{r^n}{\dist((X,t),\partial_e\Omega)^n}.\end{equation}\end{lemma}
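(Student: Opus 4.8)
The plan is to obtain \eqref{e:cylinder-3} directly from the universal cylinder estimate, Lemma~\ref{cylinder-estimate}, specialized to $s=r^2$; indeed $U((X_0,t_0),r)=U_{r,r^2}(X_0,t_0)$ is exactly a parabolic ball, and with this choice Lemma~\ref{cylinder-estimate} reads
\[
\omega_\Omega^{X,t}\big(U((X_0,t_0),r)\big)\ \le\ A_n\, W\big(X-X_0,\ \tau\big)\, r^n,\qquad \tau:=(1/2n)r^2+r^2+(t-t_0),
\]
where $A_n:=\phi\big(1,\,(1/2n)+2\big)^{-1}$ is a finite constant depending only on $n$ (finite since $\phi(1,\cdot)>0$ on $(0,\infty)$ by Lemma~\ref{fundprop}). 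Writing $d:=\dist((X,t),\partial_e\Omega)$, so that $0<r\le d$ by hypothesis, it therefore suffices to show $W(X-X_0,\tau)\lesssim_n d^{-n}$.

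First I would dispose of the degenerate configuration in which $(X_0,t_0)$ lies well into the future of the pole. If $t_0\ge t+r^2$, then the time-interval $(t_0-r^2,t_0+r^2)$ of the cylinder $U_{r,r^2}(X_0,t_0)$ is contained in $(t,\infty)$, so by Remark~\ref{zero-future} we get $\omega_\Omega^{X,t}(U((X_0,t_0),r))\le\omega_\Omega^{X,t}(\RR^n\times\{s\ge t\})=0$ and there is nothing to prove. Hence I may assume $t-t_0>-r^2$; this forces $\tau>(1/2n)r^2>0$, so $W(X-X_0,\tau)=\phi(|X-X_0|,\tau)$ with $\phi$ as in Lemma~\ref{fundprop}.

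The main step is then a short dichotomy based on $\dist((X,t),(X_0,t_0))\ge d$, i.e.\ $\max\{|X-X_0|,\,|t-t_0|^{1/2}\}\ge d$, which holds because $(X_0,t_0)\in\partial_e\Omega$ and $d$ is an infimum. If $|X-X_0|\ge d$, then the third bullet of Lemma~\ref{fundprop} together with the monotonicity of $\rho\mapsto\rho^{-n}$ gives
\[
\phi(|X-X_0|,\tau)\ \le\ \max_{\tau'>0}\phi(|X-X_0|,\tau')\ =\ C_n\,|X-X_0|^{-n}\ \le\ C_n\,d^{-n}.
\]
If instead $|X-X_0|<d$, then necessarily $|t-t_0|\ge d^2$; combined with $t-t_0>-r^2\ge -d^2$ this yields $t-t_0\ge d^2$, hence $\tau\ge d^2$, and since the horizontal traces of $\phi$ are decreasing,
\[
\phi(|X-X_0|,\tau)\ \le\ \phi(0,\tau)\ =\ (4\pi\tau)^{-n/2}\ \le\ (4\pi)^{-n/2}\,d^{-n}.
\]
In either case $W(X-X_0,\tau)\le\max\{C_n,(4\pi)^{-n/2}\}\,d^{-n}$, and substituting back into the cylinder estimate yields \eqref{e:cylinder-3} with implicit constant $A_n\max\{C_n,(4\pi)^{-n/2}\}$ depending only on $n$.

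I do not expect a genuine obstacle: the argument is essentially bookkeeping with the phase portrait of $W$ recorded in Lemma~\ref{fundprop}. The only point requiring care is the possibility that the boundary point $(X_0,t_0)$ sits in the future of the pole, where the numerator in the cylinder estimate need not be small; this case is harmless and is dealt with at the outset using the vanishing of caloric measure on the present and future (Remark~\ref{zero-future}).
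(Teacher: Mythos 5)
Your proof is correct and follows essentially the same route as the paper: it specializes Lemma~\ref{cylinder-estimate} to $s=r^2$, notes $\dist((X,t),(X_0,t_0))\geq\dist((X,t),\partial_e\Omega)$, and splits into the cases of large space-distance (using the maximum of the vertical trace of $\phi$) and small space-distance/large time-distance (using $r\leq d$ to rule out the near future and Remark~\ref{zero-future} to discard the far future). The only cosmetic difference is that you dispose of the $t_0\geq t+r^2$ configuration up front, whereas the paper handles it inside the second case; the bounds and constants match.
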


\begin{proof} Let $(X,t)\in\Omega$, $(X_0,t_0)\in\partial_e\Omega$, and $r>0$. By Lemma \ref{cylinder-estimate}, $$\omega^{X,t}_\Omega(U((X_0,t_0),r))=\omega^{X,t}_\Omega(U_{r,r^2}(X_0,t_0))\lesssim_n W(X-X_0,(1+1/2n)r^2+t-t_0)r^n,$$ where the constant $\phi(1,1/2n+2)^{-1}\leq \phi(1,5/2)^{-1}\leq e^{1/10}(10\pi)^{n/2}$. To continue, write $$\delta=\dist((X,t),\partial_e\Omega)\leq \max(|X-X_0|,|t-t_0|^{1/2}).$$ Suppose that $|X-X_0|\geq \delta$, i.e.~the space distance is large. By Lemma \ref{fundprop}, $$W(X-X_0,
(1+1/2n)r^2+t-t_0)\leq \phi(\delta,(1+1/2n)r^2+t-t_0)\leq \phi(\delta,(1/2n)\delta^2)=C_n\delta^{-n}.$$ This yields \eqref{e:cylinder-3} when $|X-X_0|\geq \delta$. Of course,
the other possibility is that $|X-X_0|<\delta$, but $|t-t_0|\geq \delta^2$, i.e.~the space distance is small, but the time distance is large. If $t\leq t_0-r^2$, then $\omega^{X,t}(U((X_0,t_0),r))=0$ by Remark \ref{zero-future} and the estimate \eqref{e:cylinder-3} is trivially true. Thus, we may suppose that $t>t_0-r^2$. We now impose the hypothesis $r\leq \delta$, which ensures that $t>t_0-\delta^2$. Since $|X-X_0|<\delta$ and $|t-t_0|\geq \delta^2$, it follows that in fact $t\geq t_0+\delta^2$. By Lemma \ref{fundprop}, $$W(X-X_0,(1+1/2n)r^2+t-t_0)\leq  \phi(0,(1+1/2n)r^2+t-t_0) \leq \phi(0,\delta^2)=(4\pi)^{-n/2}\delta^{-n}.$$ This yields \eqref{e:cylinder-3} when $|X-X_0|<\delta$ and $|t-t_0|\geq \delta$ (and $r\leq\delta$).\end{proof}

\begin{remark}The proof of \eqref{e:cylinder-3} that we gave produces an implicit constant, which grows superexponentially as $n\rightarrow\infty$. It would be interesting to find the best possible constant and/or find geometric conditions (flatness?) under which the exponent $n$ can be improved.\end{remark}

We are ready to prove the first half of the main theorem.

\begin{proof}[Proof of {Theorem \ref{main}(i)}] Fix any domain $\Omega\subsetneq\RR^{n+1}$, pole $(X,t)\in\Omega$, and caloric measure $\omega=\omega^{X,t}_\Omega$. We wish to prove $\omega\ll\Haus^n$. Suppose that $E\subset\RR^{n+1}$ is any set with $\Haus^n(E)=0$. Let $G=E\cap \partial_e\Omega\cap \big(\RR^n\times(-\infty,t)\big)$ and let $F=E\setminus G$. On the one hand, $\omega(F)=0$ by \eqref{omega-support}. On the other hand, our supposition guarantees that $\Haus^n(G)\leq \Haus^n(E)=0$. Fix $\delta>0$. By Lemma \ref{spherical-null-sets}, we can find a sequence $U_i=U((X_i,t_i),r_i)$ of open balls centered at $(X_i,t_i)\in G$ of radius $0<r_i<\delta$ such that $G\subset \bigcup_1^\infty U_i$, and $\sum_1^\infty (2r_i)^n<\delta$. As long as $\delta$ is sufficiently small (in particular, we require $\delta\leq \dist((X,t),\partial_e\Omega)$), Lemma \ref{ball-estimate} gives $$\omega(G) \leq \sum_1^\infty \omega(U((X_i,t_i),r_i)) \lesssim_n
\sum_1^\infty \frac{r_i^n}{\dist((X,t),\partial_e\Omega)^n} \lesssim_n \frac{\delta}{\dist((X,t),\partial_e\Omega)^n}.$$ Sending $\delta\rightarrow 0$, we conclude that $\omega(G)=0$. Thus, $\omega(E)\leq \omega(F)+\omega(G)=0$. This verifies $\omega$ is absolutely continuous with respect to $\Haus^n$. It follows that if $E\subset\RR^{n+1}$ is a Borel set with $\omega(E)>0$, then $\Haus^n(E)>0$ and $\dim_H E\geq n$. Therefore, $\underline{\dim}_H\,\omega \geq n$.\end{proof}

\section{Bourgain's alternative and upper dimension bound} \label{sec:upper-bound}

Write $\gap(A,B)=\inf\{\dist((X,t),(Y,s)):(X,t)\in A,\,(Y,s)\in B\}$ for the \emph{gap} between sets in $A,B\subset\RR^{n+1}$ with the parabolic distance. (This terminology comes from variational analysis. Harmonic analysts may be more familiar with the notation $\dist(A,B)$. As gap does not satisfy the triangle inequality, the latter notation is perhaps unwise.)

The following lemma is the analogue of \cite[Lemma 1]{Bourgain} in the parabolic context. See Figure \ref{fig:alt}.

\begin{lemma}[Bourgain's alternative for caloric measure] \label{alternative} Fix $\Delta=\Delta^m(\RR^{n+1})$ for some $n\geq 1$ and $m\geq 2$; see \S\ref{ss:cubes}. Let $Q$ be an open parabolic cube in $\RR^{n+1}$ of side length $r$, i.e. $$Q=z_Q+(-r/2,r/2)^n\times (-r^2,0)\quad\text{for some point }z_Q=(X_Q,t_Q)\in\RR^{n+1}.$$ Let $Q_*=z_Q+S\times [-3\epsilon^2r^2,-2\epsilon^2r^2]$ be a parabolic cube in $\RR^{n+1}$ of side length $\epsilon r$ contained in $Q$ such that $\gap(Q_*,\partial_n Q)\geq \delta r$. Let $F=z_Q+S\times [-\epsilon^2r^2,0)$. Suppose that $\epsilon/\delta\leq 1/\sqrt{6n}$. For all closed sets $E\subset\RR^{n+1}$ and $0<\eta<1$, at least one (possibly both) of the following alternatives hold: \begin{align}\label{alternative-1} \omega^{X,t}_{Q\setminus E}(E\cap Q)\geq \eta\!\quad\quad\quad&\text{for all }(X,t)\in F\setminus E,\\
\label{alternative-2} \Net^\rho_\infty(E\cap Q_*)\lesssim_{n}\frac{m^\rho}{1-m^{n-\rho}}\,\eta(\epsilon r)^\rho\quad&\text{for all }n< \rho\leq n+2,\end{align} where $\Net^\rho_\infty$ denotes the $\rho$-dimensional $m$-adic parabolic net content relative to $\Delta$.\end{lemma}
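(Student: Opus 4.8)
The plan is to argue by contradiction on the first alternative: suppose \eqref{alternative-1} fails, so there exists a pole $(X_1,t_1)\in F\setminus E$ with $\omega^{X_1,t_1}_{Q\setminus E}(E\cap Q)<\eta$. From this single pole I would derive the content bound \eqref{alternative-2} on $E\cap Q_*$. The bridge between a caloric-measure smallness statement and a Hausdorff-content smallness statement is Frostman's lemma (Theorem \ref{frostman}): if $\Net^\rho_\infty(E\cap Q_*)$ were large, then there is a compact $K\subset E\cap Q_*$ carrying a Frostman measure $\mu$ with $\mu(K)\gtrsim_n \Net^\rho_\infty(E\cap Q_*)$ and $\mu(B)\lesssim_{n,m} (\diam B)^\rho$ (more precisely the dyadic bound \eqref{frostman-Q}). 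The game is then to show that such a $\mu$ forces the caloric measure of $E\cap Q$ at the pole $(X_1,t_1)$ to be bounded below by something comparable to $\mu(K)/$(normalizing powers of $\epsilon r$), which contradicts the assumed bound $<\eta$ once $\Net^\rho_\infty(E\cap Q_*)$ exceeds the right-hand side of \eqref{alternative-2}.

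The mechanism producing a lower bound for $\omega^{X_1,t_1}_{Q\setminus E}(E\cap Q)$ from the Frostman measure is a single-layer potential estimate: I would form $u(Y,s)=\int W(Y-Y',s-s')\,d\mu(Y',s')$, the caloric potential of $\mu$ (sharing the heat kernel $W$ from \S\ref{sec:lower-bound}), and compare it with the caloric measure of $E\cap Q$ on $Q\setminus E$. Since $W$ is a supertemperature on $\RR^{n+1}$ and a temperature off the diagonal, $u$ is a supertemperature on $Q\setminus E$; on $\partial_e(Q\setminus E)$ it is nonnegative, and near $K\subset E$ it is bounded above because $\mu$ has small mass on balls (the $\rho>n$ Frostman decay \eqref{frostman-Q} is exactly what makes $\int W(Y-Y',s-s')\,d\mu(Y',s')$ finite and controlled, uniformly, by a geometric-series computation: slice $\{\dist((Y,s),(Y',s'))\sim 2^{-k}\epsilon r\}$, use $W\lesssim_n (2^{-k}\epsilon r)^{-n}$ by Lemma \ref{fundprop}, and $\mu$ of that slice $\lesssim (2^{-k}\epsilon r)^\rho$; the sum is $\lesssim_n (\epsilon r)^{\rho-n}\sum_k 2^{-k(\rho-n)}$, which converges and gives the factor $1/(1-m^{n-\rho})$ after being careful to use the $m$-adic scale — this is where that denominator in \eqref{alternative-2} comes from). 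Dividing $u$ by this uniform upper bound $M$ puts $u/M$ in the upper class for $\chi_{E\cap Q}$ on $Q\setminus E$, hence $\omega^{X_1,t_1}_{Q\setminus E}(E\cap Q)\ge$ (a lower bound for $u(X_1,t_1)$)$/M$. For the lower bound on $u(X_1,t_1)$ I use that the pole sits in $F$, which is close to $Q_*$ but separated \emph{backwards in time} by a gap of order $\epsilon^2 r^2$ in the time variable and at most $\epsilon r$ in space; the hypothesis $\epsilon/\delta\le 1/\sqrt{6n}$ together with $\gap(Q_*,\partial_n Q)\ge\delta r$ is precisely calibrated so that for every $(Y',s')\in Q_*$ one has $W(X_1-Y',t_1-s')\gtrsim_n (\epsilon r)^{-n}$: the time lag $t_1-s'$ is between $\epsilon^2r^2$ and $3\epsilon^2 r^2$ — comparable to $(\epsilon r)^2$ — and the spatial displacement is $O(\epsilon r)$, so by the parabolic-trace part of Lemma \ref{fundprop}, $W(X_1-Y',t_1-s')\asymp_n (\epsilon r)^{-n}$. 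Integrating, $u(X_1,t_1)\gtrsim_n (\epsilon r)^{-n}\mu(K)\gtrsim_n (\epsilon r)^{-n}\Net^\rho_\infty(E\cap Q_*)$.

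Combining, $\omega^{X_1,t_1}_{Q\setminus E}(E\cap Q)\gtrsim_n \dfrac{(\epsilon r)^{-n}\Net^\rho_\infty(E\cap Q_*)}{M}$ with $M\lesssim_{n} \dfrac{m^\rho}{1-m^{n-\rho}}(\epsilon r)^{\rho-n}$, so the assumed bound $<\eta$ forces $\Net^\rho_\infty(E\cap Q_*)\lesssim_n \dfrac{m^\rho}{1-m^{n-\rho}}\eta(\epsilon r)^\rho$, which is \eqref{alternative-2}. I expect the main obstacle to be bookkeeping the constants so that the $\rho$-dependence enters \emph{only} through the explicit factor $m^\rho/(1-m^{n-\rho})$ and every other constant depends on $n$ alone: this requires being disciplined about summing the heat-kernel potential on the $m$-adic dyadic scale rather than the usual $2$-adic scale, and using the $m$-adic Frostman bound \eqref{frostman-Q} (valued with $\side Q$, not $\diam Q$) instead of \eqref{frostman-A}. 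A secondary technical point is making sure $u$ is genuinely in the upper class: one must check the boundary behavior of $u$ at points of $\partial_e(Q\setminus E)$, including the semi-singular part, which is routine because $u\ge 0$ everywhere and is lower semicontinuous as a potential of a positive measure — so the liminf conditions in \eqref{e:upper} hold automatically on $(\partial_e(Q\setminus E))\setminus E$ and on $E\cap Q$ one needs only the upper bound $M$, already established. Finally, I would note that the separation hypothesis $\epsilon/\delta\le 1/\sqrt{6n}$ is used exactly once — in the lower bound for $W$ at the pole — and track where the $\sqrt{6n}$ arises (from requiring $|X_1-Y'|^2 + $ (something) $\le 2n\cdot(t_1-s')$ so that we stay on the increasing branch of the vertical trace of $\phi$, given $t_1 - s' \le 3\epsilon^2 r^2$ and $|X_1 - Y'| \le (\text{diam terms}) \lesssim \epsilon r + \epsilon r$).
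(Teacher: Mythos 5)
Your overall strategy (negate \eqref{alternative-1}, apply Frostman's lemma on $E\cap Q_*$, form the caloric potential $u$ of the Frostman measure $\mu$, and convert a lower bound on $u$ at the pole into a lower bound on $\omega^{X,t}_{Q\setminus E}(E\cap Q)$) is the same as the paper's. However, the central comparison step has a gap that breaks the argument as written.

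You claim that $u/M$, where $M$ is the uniform upper bound on $u$, is an ``upper class'' member for $\chi_{E\cap Q}$ on $Q\setminus E$, and conclude $\omega^{X_1,t_1}_{Q\setminus E}(E\cap Q)\geq u(X_1,t_1)/M$. Both halves of this are wrong. First, membership in the \emph{upper} class $\mathfrak{U}^f_{Q\setminus E}$ yields an \emph{upper} bound $\omega\leq v$, not a lower bound, so even if the membership claim were true it would give the inequality in the wrong direction. To bound $\omega$ from \emph{below} you need a member of the \emph{lower} class $\mathfrak{L}^f_{Q\setminus E}$, and for that you must produce a subtemperature whose $\limsup$ at the normal boundary $\partial_n Q$ (where $\chi_{E\cap Q}=0$) is $\leq 0$. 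But $u/M$ is a nonnegative potential, so $u/M>0$ in general on $\partial_n Q$: it is not a lower class member. The potential of a positive measure does not vanish on $\partial_n Q$, and merely rescaling it cannot repair that.

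This is precisely why the paper must establish a third estimate you have omitted: an \emph{upper} bound $u|_{\partial_n Q}\lesssim (\epsilon r)^{-n}\mu(K)$ that is strictly smaller, by a definite factor (in the paper, $e^{-n/12}$ versus $e^{-n/2}$), than the lower bound on $u|_F$. The correct lower class member is then $w = (u - \|u\|_{L^\infty(\partial_n Q)}) / \|u\|_{L^\infty(Q\setminus K)}$, which satisfies $w\leq 0$ on $\partial_n Q$ and $w\leq 1$ globally; only this $w$ gives $\omega^{X,t}_{Q\setminus K}(K)\geq w(X,t)$. Relatedly, you have misidentified where the hypothesis $\epsilon/\delta\leq 1/\sqrt{6n}$ enters: it is not used in the lower bound for $u$ at poles in $F$ (there the time gap $\epsilon^2 r^2 \leq t-s\leq 3\epsilon^2 r^2$ and spatial displacement $\leq\sqrt{n}\epsilon r$ already put you on the decreasing branch of the vertical trace, unconditionally). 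The hypothesis is needed for the missing boundary estimate on $\partial_n Q$: there $|X-Y|\geq\delta r$ while $|t-s|\leq 3\epsilon^2r^2$, so to stay on the \emph{increasing} branch of $t\mapsto\phi(\delta r,t)$ you need $3\epsilon^2 r^2\leq (1/2n)\delta^2 r^2$, i.e.\ $\epsilon/\delta\leq 1/\sqrt{6n}$, and this gives $W\leq\phi(\sqrt{6n},3)(\epsilon r)^{-n}$ there. Without this estimate, and without the subtraction in $w$, the argument does not close. (A lesser omission: one also has to justify passing from $\omega^{X,t}_{Q\setminus K}(K)$ to $\omega^{X,t}_{Q\setminus E}(E\cap Q)$, which in the paper needs a short side argument because the behavior of $\omega^{X,t}_{Q\setminus K}(K)$ near $\partial_s Q$ is not controlled; but the main gap is the one above.)
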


\begin{remark}\label{n-only} (i) By Remark \ref{hausdorff-net}, one can replace $\Net^\rho_\infty$ in \eqref{alternative-2} with the parabolic Hausdorff content $\Haus^\rho_\infty$. (It is then best to pick $m=2$.)

(ii) We observe that $(1-m^{n-\rho})^{-1}\leq (1-2^{n-\rho})^{-1}\leq 2$ when $n+1\leq \rho\leq n+2$.\end{remark}

\begin{figure}\begin{center}\includegraphics[width=.8\textwidth]{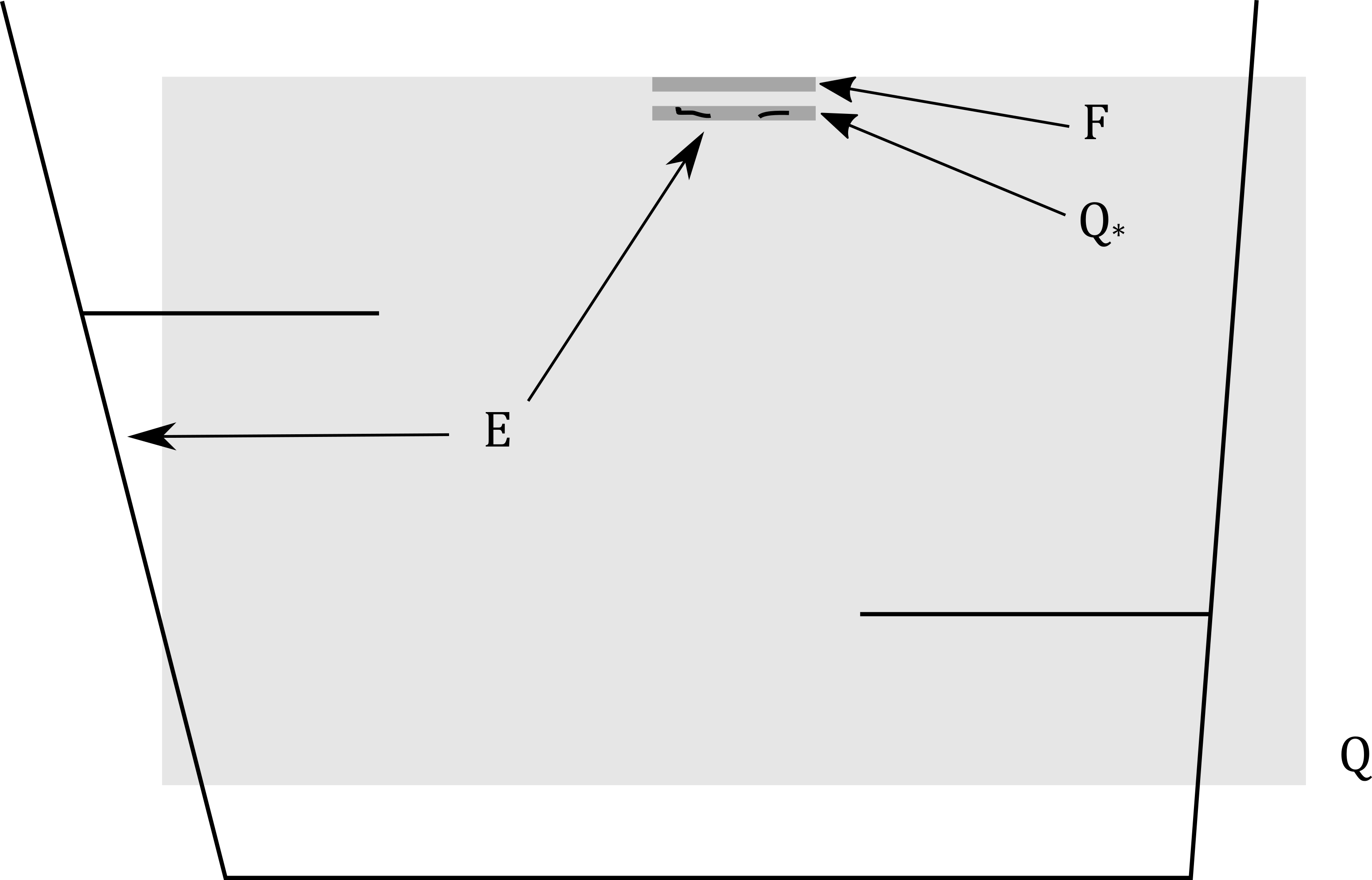}\end{center}\caption{Bourgain's alternative for a cube $Q$ in $\RR^2$ with size parameters $r/r^2=(1+\sqrt{5})/2$, $\epsilon=1/7$, and $\delta=3/7$. If the content of $E\cap Q_*$ is large, then caloric measure $\omega^{X,t}_{Q\setminus E}(E\cap Q)$ is large for every pole $(X,t)\in F$.} \label{fig:alt} \end{figure}

\begin{proof}[Proof of Lemma {\ref{alternative}}] Fix $\Delta=\Delta^m(\RR^{n+1})$. Let $Q$ be an open parabolic cube in $\RR^{n+1}$ with side length $r$ and center $z_Q$. Choose associated subcubes $Q_*$ and $F$ with common face $S$ and parameters $\delta$ and $\epsilon$ as stated. The center and side length of the cube $Q$ play no important role in the proof that follows, but the astute reader might rightly object to normalizing $z_Q=0$ and $r=1$, as the net contents $\Net^\rho_\infty$ are neither translation nor dilation invariant. Because we do not wish to import any additional dependence on $m$ in \eqref{alternative-2} beyond what is stated, we shall work in the general setup.

Let $E\subset\RR^{n+1}$ be closed and let $0<\eta<1$. Suppose that \eqref{alternative-1} fails. Then we must establish \eqref{alternative-2}. Freeze $n<\rho\leq n+2$. If $\Haus^\rho(E\cap Q_*)=0$, then $\Net^\rho_\infty(E\cap Q_*)=0$ and \eqref{alternative-2} is immediate for the fixed value of $\rho$. Thus, we may assume that $\Haus^\rho(E\cap Q_*)>0$. By Theorem \ref{frostman}, there exists a compact set $K\subset E\cap Q_*$ and finite Borel measure $\mu$ on $\RR^{n+1}$ with support in $K$ such that \begin{equation}\label{mu-upper-ar}
\mu(R)\leq (\side R)^\rho\quad\text{for all }R\in\Delta\text{ and}\end{equation}
\begin{equation}\label{mu-lower-mass}\mu(K)\gtrsim_n \Net^\rho_\infty(E\cap Q_*).\end{equation}
Consider the potential $u(X,t)=\int_K W(X-Y,t-s)\,d\mu(Y,s)$ for $(X,t)\in\RR^{n+1}\setminus K$, which solves the heat equation by differentiation under the integral. To proceed, let us momentarily take for granted three estimates (proved below):
\begin{equation*} \tag{E1} \label{estimate1}
    u(X,t)\lesssim_{n} \frac{m^\rho}{1-m^{n-\rho}}(\epsilon r)^{\rho-n} \quad\text{for all }(X,t)\in \RR^{n+1}\setminus K,
\end{equation*}
\begin{equation*}\tag{E2} \label{estimate2}
    u(X,t)\geq (12\pi)^{-n/2}e^{-n/12}(\epsilon r)^{-n}\mu(K)\quad\text{for all }(X,t)\in F,
\end{equation*}
\begin{equation*}\tag{E3} \label{estimate3}
    u(X,t)\leq(12\pi)^{-n/2}e^{-n/2}(\epsilon r)^{-n}\mu(K)\quad\text{for all }(X,t)\in\partial_n Q,
\end{equation*} where (E1) requires $\rho>n$ and (E3) holds provided that $\epsilon/\delta \leq 1/\sqrt{6n}$. Define an auxiliary temperature $w$ on $\RR^{n+1}\setminus K$ by setting $$w(X,t)=\frac{u(X,t)-\|u\|_{L^\infty(\partial_n Q)}}{\|u\|_{L^\infty(Q\setminus K)}}\quad\text{for all }(X,t)\in\RR^{n+1}\setminus K.$$
By design, $w\leq 0$ on $\partial_n Q$, $w$ is continuous on $\partial_n Q$, and $w\leq 1$ on all of $Q\setminus K$. Thus, $$\limsup_{(X,t)\rightarrow (X_0,t_0)} w(X,t) \leq \chi_K(X_0,t_0)\quad\text{for all }(X_0,t_0)\in\partial(Q\setminus K)\setminus\partial_s Q.$$ Note that $\partial(Q\setminus K)\setminus\partial_s Q$ contains $\partial_e(Q\setminus K)$. Hence $w$ belongs to the lower class  $\mathfrak{L}^f_{Q\setminus K}$ for $f=\chi_K$; see \eqref{e:lower}. Thus, $w(X,t)\leq H^f_{Q\setminus K}(X,t)=\omega^{X,t}_{Q\setminus K}(K)$ for every pole $(X,t)\in Q\setminus K$.

\emph{Brief interlude.} We claim that $\omega^{X,t}_{Q\setminus K}(K)\leq \omega^{X,t}_{Q\setminus E}(E\cap Q)$ for every pole $(X,t)\in Q\setminus E$. As it is possible that $\partial_e(Q\setminus E)\cap\partial_s Q\neq\emptyset$ and we do not have good control on the boundary behavior of $\omega^{X,t}_{Q\setminus K}(K)$ as $(X,t)$ approaches $\partial_s Q$, this requires a short argument. To check that the comparison holds at some fixed pole $(X_1,t_1)\in Q\setminus E$, put $$E'=E\cap\big(\RR^n\times(-\infty,t_1]\big)$$ and note that $\omega^{X_1,t_1}_{Q\setminus E}(E\cap Q)=\omega^{X_1,t_1}_{Q\setminus E'}(E'\cap Q)$ by Remark \ref{zero-future}. That is to say, we can modify the part of the domain forward-in-time from $(X_1,t_1)$ without changing the value of the caloric measure at $(X_1,t_1)$. Because $E'$ and $\partial_s Q$ are separated in time, we have $\partial_s(Q\setminus E')\supset \partial_s Q$. Hence $\partial_e(Q\setminus E') \subset \partial_n Q \cup (E'\cap Q)$. Now, $\lim_{(X,t)\rightarrow (X_0,t_0)} \omega^{X,t}_{Q\setminus K}(K)=0$ for all $(X_0,t_0)\in\partial_n Q$ and $\limsup_{(X,t)\rightarrow (X_0,t_0)}\omega^{X,t}_{Q\setminus K}(K) \leq 1$ for all $(X_0,t_0)\in E'\cap Q$. Thus, $\omega^{X,t}_{Q\setminus K}(K)$ belongs to the lower class $\mathfrak{L}^g_{Q\setminus E'}$ for $g=\chi_{E'\cap Q}$. Therefore, $\omega^{X_1,t_1}_{Q\setminus K}(K)\leq H^g_{Q\setminus E'}(X_1,t_1)=\omega^{X_1,t_1}_{Q\setminus E'}(E'\cap Q)=\omega^{X_1,t_1}_{Q\setminus E}(E\cap Q)$, as required.

To continue, suppose that $(X,t)\in F\setminus E$. Using (E1) to estimate the denominator in the definition of $w$ and (E2), (E3) to estimate the numerator, we see that
\begin{align*}
    w(X,t)&\gtrsim_{n} (1-m^{n-\rho})\frac{(12\pi)^{-n/2}\big(e^{-n/12}-e^{-n/2}\big)(\epsilon r)^{-n}\mu(K)}{m^{\rho}(\epsilon r)^{\rho-n}}\gtrsim_n (1-m^{n-\rho})\frac{\mu(K)}{m^\rho (\epsilon r)^\rho}.
\end{align*} Recalling \eqref{mu-lower-mass} and $\omega^{X,t}_{Q\setminus E}(E\cap Q)\geq \omega^{X,t}_{Q\setminus K}(K)\geq w(X,t)$, it follows that \begin{equation}\label{b-estimate} \frac{m^\rho(\epsilon r)^\rho}{1-m^{n-\rho}}\,\omega^{X,t}_{Q\setminus E}(E\cap Q) \gtrsim_n \Net^\rho_\infty(E\cap Q_*).\end{equation} By supposition, \eqref{alternative-1} fails, i.e.~$\omega^{X,t}_{Q\setminus E}(E\cap Q)<\eta$ for some $(X,t)\in F\setminus E$. Together with \eqref{b-estimate}, this yields \eqref{alternative-2} for the fixed value of $\rho$. Since we fixed $\rho$ arbitrarily, this completes the proof of the lemma, assuming \eqref{estimate1}, \eqref{estimate2}, and \eqref{estimate3}.
\end{proof}

\begin{proof}[Proof of {\eqref{estimate1}}]
For the duration of the proof, write \begin{equation}\label{dist-infty}\dist_\infty((X,t),(Y,s))=\max\left(2\|X-Y\|_\infty,\sqrt{2}|t-s|^{1/2}\right),\end{equation} where $\|X-Y\|_\infty=\max_{i=1}^n|X_i-Y_i|$ is the $L^\infty$ distance on $\RR^n$. The metric $\dist_\infty$ has the feature that its closed balls $B_\infty((X,t),\lambda)=\{(Y,s):\dist_\infty((X,t),(Y,s))\leq \lambda\}$ are closed parabolic cubes in $\RR^{n+1}$ of side length $\lambda$ with center at $(X,t)$.

Fix $(X,t)\in \RR^{n+1}\setminus K$ and let $j_0$ denote the integer such that $m^{-(j_0+1)} < \side Q_* \leq m^{-j_0}.$
We start by estimating the $\mu$ measure of certain sets that are needed below. Recall that $\mu(R)\leq (\side R)^\rho$ for all $R\in\Delta$ by \eqref{mu-upper-ar}. Since $K\subset Q_*$, we can cover $K$ by $3^{n+1}$ or fewer cubes $R\in \Delta$ of side length $m^{-j_0}$ such that $\overline{R}\cap\overline{Q_*}\neq\emptyset$. Hence $$\mu(K)\leq 3^{n+1} m^{-\rho j_0}.$$ Similarly, $\mu(B_\infty((X,t),m^{-j}))\leq 3^{n+1} m^{-\rho j}$ for all $j\in\ZZ$. To upper bound the potential $u(X,t)=\int_K W(X-Y,t-s)\,d\mu(Y,s)$, we now split into two cases.

\emph{Case 1.} Suppose that $(X,t)$ is far away from $K$ in the sense that $\dist_\infty((X,t),K)=\inf_{(Y,s)\in K}\dist_\infty((X,t),(Y,s))>m^{-j_0}$. Then for any $(Y,s)\in K$, we have $$|X-Y|\geq \|X-Y\|_\infty>\frac12 m^{-j_0}\quad\text{or}\quad|t-s|> \frac12 m^{-2j_0}.$$ In the first scenario, when $t>s$, $W(X-Y,t-s) \leq \phi(\tfrac{1}{2}m^{-j_0},t-s)\lesssim_n m^{nj_0}$ by Lemma \ref{fundprop} (the maximum bound on vertical traces). In the second scenario, when $t>s$, $$W(X-Y,t-s)\leq \phi(0,t-s) \leq \phi(0,\tfrac12 m^{-2j_0}) \lesssim_n m^{nj_0},$$ as well. And, of course, $W(X-Y,t-s)=0$ whenever $t\leq s$. All together, we obtain $u(X,t)=\int_K W(X-Y,t-s)\,d\mu(Y,s) \lesssim_n m^{nj_0}\mu(K) \lesssim_n m^{-j_0(\rho-n)}
\lesssim_n m^{\rho-n} (\epsilon r)^{\rho-n}$, where the last inequality follows from the choice of $j_0$.

\emph{Case 2.} Suppose that $(X,t)$ is nearby $K$, meaning $\dist_\infty((X,t),K)\leq m^{-j_0}$. For each $j\geq j_0$, define the annulus $A_j=B_\infty((X,t),m^{-j})\setminus B_\infty((X,t),m^{-(j+1)})$. \emph{Mutatis mutandis}, the argument in Case 1 yields $W(X-Y,t-s) \lesssim_n m^{n(j+1)}$ for all $(Y,s)\in K\cap A_j$. Hence \begin{equation*}\begin{split}
u(X,t)&= \sum_{j=j_0}^\infty \int_{K\cap A_j} W(X-Y,t-s)\,d\mu(Y,s)\lesssim_n \sum_{j=j_0}^\infty m^{n(j+1)}\mu(A_j)\\
&\lesssim_n m^n\sum_{j=j_0}^\infty m^{-j(\rho-n)}\lesssim_n m^n\frac{m^{-j_0(\rho-n)}}{1-m^{n-\rho}}\lesssim_n m^\rho \frac{(\epsilon r)^{\rho-n}}{1-m^{n-\rho}}.\end{split} \end{equation*} This verifies \eqref{estimate1}.
\end{proof}

\begin{proof}[Proof of {\eqref{estimate2}}]
Let $(X,t)\in F$ and $(Y,s)\in K$. Then $|X-Y|\leq \diam S=\sqrt{n}\epsilon r$ and $\epsilon^2r^2\leq t-s \leq 3\epsilon^2r^2.$
By Lemma \ref{fundprop}, $$W(X-Y,t-s)\geq \phi(\sqrt{n}\epsilon r,t-s)\geq \phi(\sqrt{n}\epsilon r, 3\epsilon^2r^2)=\phi(\sqrt{n},3)(\epsilon r)^{-n},$$ where for the second inequality, we used that $\phi(\sqrt{n}\epsilon r,\cdot)$ is decreasing on $[\epsilon^2r^2/2,\infty)$. Thus, $u(X,t) = \int_K W(X-Y,t-s)\,d\mu(Y,s)\geq \phi(\sqrt{n},3)(\epsilon r)^{-n}\mu(K)$. Finally, evaluating $\phi(\sqrt{n},3)=(12\pi)^{-n/2}e^{-n/12}$, we arrive at (E2).
\end{proof}

\begin{proof}[Proof of {\eqref{estimate3}}] By assumption $\gap(Q_*,\partial_n Q)\geq \delta r$ and $Q_*$ has side length $\epsilon r$. In the argument that follows, we will impose several constraints on $\delta$ and on $\epsilon$.

Let $(X,t)\in \partial_n Q$ and $(Y,s)\in K$. Since $K\subset Q_*$, we have $\dist((X,t),(Y,s))\geq \delta r$. Dispensing with an easy case, note that if $t=s$ or $t\leq -3\epsilon^2r^2$, then $W(X-Y,t-s)=0$ and (E3) holds trivially. Thus, we may focus on the case that $t\neq s$ and $-3\epsilon^2r^2 < t\leq 0$. Then $0<|t-s|\leq 3\epsilon^2r^2<\delta^2r^2$ as long as we require $\sqrt{3}\epsilon<\delta$. Since $(X,t)$ and $(Y,s)$ are far apart, but $t$ and $s$ are close, it follows that $|X-Y|=\dist((X,t),(Y,s))\geq \delta r$. According to Lemma \ref{fundprop}, $\phi(\delta r, \cdot)$ is increasing on $(0,(1/2n)\delta^2r^2]$. Thus, by requiring $3\epsilon^2\leq (1/2n)\delta^2$, we may estimate $$W(X-Y,t-s)\leq \phi(\delta r,t-s) \leq \phi(\delta r, 3\epsilon^2r^2)= \phi(\delta/\epsilon,3)(\epsilon r)^{-n}\leq \phi(\sqrt{6n},3)(\epsilon r)^{-n}.$$ Thus, $u(X,t) \leq \phi(\sqrt{6n},3)(\epsilon r)^{-n}\mu(K)=(12\pi)^{-n/2}e^{-n/2}\mu(K)$. This yields (E3) provided that $\epsilon/\delta \leq 1/\sqrt{6n}$.
\end{proof}

Motivated by Theorem \ref{dim-lemma}, we now use Lemma \ref{alternative} to prove the following lemma at the heart of Theorem \ref{main}(ii). The proof of the corresponding statement for harmonic measure in \cite{Bourgain} (see Lemma 2) incorporated both Hausdorff and net contents, but in hindsight we found it easier to work exclusively with net contents.

\begin{lemma}\label{lemma2} For each $n\geq 1$, there exists an integer $m\geq 7$ and numbers $\rho>0$ and $\lambda>0$ with the following property. Let $\Delta=\Delta^m(\RR^{n+1})$, let $E\subset \RR^{n+1}$ be a closed set, let $(X,t)\in \Omega\equiv \RR^{n+1}\setminus E$, and let $\omega=\omega^{X,t}_{\Omega}$. If nearly every point in $\partial_e\Omega$ is regular for the Dirichlet problem for the heat equation (see \S\ref{sec:heat}), then for every cube $P\in \Delta$ such that $(X,t)\in\RR^{n+1}\setminus\overline{P}$, we have $$\Net^{n+2-\rho}_{m^{-1}\side P}(E\cap P)<(\side P)^{n+2-\rho}\quad\text{or}$$ $$\sum_{Q\in\Child(P)} \omega(Q)^{1/2} (\vol Q)^{1/2} \leq m^{-\lambda}\, \omega(P)^{1/2}(\vol P)^{1/2}.$$
\end{lemma}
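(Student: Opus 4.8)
The plan is to show that if the first (content) alternative fails for a given $P\in\Delta$ with pole $(X,t)\in\RR^{n+1}\setminus\overline P$, then the second (Cauchy--Schwarz) alternative holds; the constants $m\ge 7$, $\rho\in(0,2)$, $\lambda>0$, all depending only on $n$, get fixed along the way. Write $d=n+2-\rho\in(n,n+2)$, $\Omega=\RR^{n+1}\setminus E$, $\omega=\omega^{X,t}_\Omega$, and suppose $\Net^d_{m^{-1}\side P}(E\cap P)\ge(\side P)^d$; since $\Net^d_\infty\ge\Net^d_{m^{-1}\side P}$, also $\Net^d_\infty(E\cap P)\ge(\side P)^d$. \emph{Step 1 (many heavy children).} Writing $E\cap P=\bigsqcup_{Q\in\Child(P)}(E\cap Q)$ and combining subadditivity of $\Net^d_\infty$ with $\Net^d_\infty(E\cap Q)\le(\side Q)^d=m^{-d}(\side P)^d$, a one-line averaging argument produces a subcollection $\mathcal H\subset\Child(P)$ with $\#\mathcal H\ge\tfrac12 m^{d}$ and $\Net^d_\infty(E\cap Q)\ge\tfrac12 m^{-\rho}(\side Q)^d$ for every $Q\in\mathcal H$ -- a fixed proportion of the children of $P$ carry a nearly maximal amount of the $d$-content of $E$. \emph{Step 2 (Bourgain's alternative on each heavy child).} For $Q\in\mathcal H$ I would invoke Lemma \ref{alternative} with the auxiliary cube taken to be an open parabolic cube $\widehat Q$ of side $\kappa\,\side Q$ having $Q$ in the role of its ``$Q_*$'' (so $Q$ sits in the third time-layer from the top of $\widehat Q$) and with $F_Q$ the associated top cube; here $\kappa=\kappa(n)$ is chosen so that $\gap(Q,\partial_n\widehat Q)\ge\delta\,\side\widehat Q$ with $\delta=\sqrt{6n}/\kappa$ and $\epsilon=1/\kappa$, making $\epsilon/\delta=1/\sqrt{6n}$ as Lemma \ref{alternative} requires (e.g.\ $\kappa=1+2\sqrt{6n}$ works, since then both the spatial gap $(\kappa-1)\side Q/2$ and the temporal gap $\sqrt{\kappa^2-3}\,\side Q$ are $\ge\sqrt{6n}\,\side Q$). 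Taking $\eta=\eta_0=\eta_0(n,m,\rho)>0$ strictly below the value that the heaviness estimate of Step 1 would force in \eqref{alternative-2} -- one finds $\eta_0$ of order $c_n(1-m^{\rho-2})\,m^{-(n+2)}$, with $c_n$ the Frostman constant of Theorem \ref{frostman} and the implicit constant of Lemma \ref{alternative} -- rules out \eqref{alternative-2} for $Q\in\mathcal H$, so \eqref{alternative-1} must hold: $\omega^{X',t'}_{\widehat Q\setminus E}(E\cap\widehat Q)\ge\eta_0$ for all $(X',t')\in F_Q\setminus E$. Finally a comparison through the maximum principle (Theorem \ref{maximum principle}), valid because $\widehat Q\setminus E\subset\Omega$ and nearly every point of $\partial_e\Omega$ is regular, upgrades this to
$$\omega^{X',t'}_{\Omega}(\widehat Q)\ \ge\ \eta_0\qquad\text{for all }(X',t')\in F_Q\setminus E.$$

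\emph{Step 3 (propagating the deficit -- the crux).} Using this absorption estimate together with the nested-rectangle inequality of Corollary \ref{c-nested}, the goal is to show that $\omega$ cannot be close to equidistributed over $\Child(P)$: there is a fixed positive fraction of children $Q'$ on which $\omega(Q')$ undershoots its equidistributed share $\omega(P)/m^{n+2}$ by a definite amount -- quantitatively whatever is needed to win the Cauchy--Schwarz estimate of Step 4. Heuristically: because heavy children are abundant, the Brownian motion run into the past from $(X,t)$ must, on its way down into the lower part of $P$, traverse shells $H_i\setminus H_{i+1}$ of a nested family of parabolic rectangles, each engulfing one of the pairs $(\widehat Q,F_Q)$ from Step 2 in the relevant spatial column; crossing such a shell costs a factor at most $1-\eta_0$ of survival probability, and Corollary \ref{c-nested} multiplies these factors together. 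The delicate point -- and precisely where Bourgain's original proof of the harmonic-measure analogue is in error, see Remark \ref{fix-bourgain} -- is to position the rectangles $H_i$ so that the per-shell estimate $\omega^{X_i,t_i}_{\Omega\setminus H_{i+1}}(G_{i+1}')\le 1-\eta_0$ holds \emph{uniformly over all} admissible poles $(X_i,t_i)\in\partial H_i\cap\Omega$ (that is what Corollary \ref{c-nested} feeds on), not merely over a favorable subset of them. This forces careful bookkeeping of how the heavy children of $P$ -- and, should their distribution among the layers be too uneven, those of a bounded number of further $m$-adic generations below $P$ -- are laid out, along with a compatible re-tuning of $\epsilon$, $\delta$, $\kappa$, and ultimately $m$.

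\emph{Step 4 (conclusion by split Cauchy--Schwarz).} Since every child has volume $m^{-(n+2)}\vol P$, I would split $\Child(P)$ into the deficient children furnished by Step 3 and the rest, apply Cauchy--Schwarz to each piece, and use $\sum_{Q}\omega(Q)=\omega(P)$ to obtain $\sum_{Q\in\Child(P)}\omega(Q)^{1/2}(\vol Q)^{1/2}\le\rho_0\,\omega(P)^{1/2}(\vol P)^{1/2}$ for a fixed $\rho_0=\rho_0(n)<1$ (an elementary computation shows that a deficit of the right size on a fixed fraction of children suffices for this, with equality in Cauchy--Schwarz excluded precisely by the non-equidistribution of $\omega$). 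One then fixes $m=m(n)$, $\rho=\rho(n)$, and $\lambda=\lambda(n)=\log_m(1/\rho_0)>0$ -- enlarging $m$ if necessary to reach $m\ge7$ and to absorb the implicit constants of Steps 1--3 -- which is exactly the second alternative in the statement. The main obstacle is Step 3: securing the downward deficit uniformly over all poles on each shell, the geometric--combinatorial point that Bourgain's argument mishandled.
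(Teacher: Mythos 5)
Your Step 1 averaging argument is where the approach diverges from the paper, and this divergence is what creates the gap you acknowledge in Step 3. The paper does not pass through ``many heavy children''; it instead forms the cleaner dichotomy directly at generation $d+1$ below $P$ (with $d=n+3$): either \eqref{alt2} of Bourgain's alternative holds for \emph{some} triple with $Q_*\in\Child^{d+1}(P)$ --- in which case one covers $E\cap P$ by that single low-content descendant plus all the off-chain cubes in generations $1,\dots,d+1$, and the first conclusion of Lemma~\ref{lemma2} drops out of a short geometric-series computation --- or else \eqref{alt1} holds for \emph{every} triple with $Q_*\in\Child^{d+1}(P)$. The second case is exactly what Corollary~\ref{c-nested} needs: the bound it produces involves $\sup_{(X_i,t_i)\in G_i'}\omega^{X_i,t_i}_{\Omega\setminus H_{i+1}}(G_{i+1})$, a supremum over \emph{all} admissible poles on each separating surface, so the absorption estimate must hold wherever the pole lands. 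Your Step~2 only yields absorption above heavy children; for a pole landing in $F_Q$ with $Q$ not heavy you have nothing, and one bad point per shell ruins the supremum. The ``all $(d+1)$-descendants'' hypothesis is not a luxury but the precise thing that makes Corollary~\ref{c-nested} applicable, and your ``many but not all'' hypothesis cannot feed it. You gesture at going to ``a bounded number of further $m$-adic generations'' if the heavy children are distributed unevenly, but no averaging at any fixed depth will upgrade ``a positive proportion'' to ``all''; you need to flip the quantifier, which is what the paper's Alternative~1/Alternative~2 split does.

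A second, quantitative reason you cannot stay at generation $1$: the number of concentric shells you can cut out of $P$ using $d$-descendants is $M=(m^d-1)/2$, and the content computation in Alternative~1 forces $\eta\sim m^{-(n+2)}$, so you need $k\gtrsim m^{n+2}(\log m)^2$ shells for $(1-\eta)^k$ to beat $\eta$; that requires $d\geq n+3$. Working at generation $1$, as your Steps~1--2 do, you have at most $\sim m/2$ shells, which is hopeless. This is also where you misdiagnose Bourgain's error. The original argument already has the uniform ``all descendants'' hypothesis, so uniformity of the per-shell penalty was never the problem; the error is purely numerical --- Bourgain takes $k\simeq m$ shells while $\eta$ necessarily scales like $m^{-n}$ (elliptic case), so $\eta^{-1}(1-\eta)^k$ does not go to zero. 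The fix is the extra depth parameter $d$, which packs $m^{d-1}$ shells into each annulus of children and makes $k\simeq m^{n+2}(\log m)^2$ available. Your Steps~2 and~4 are in the right spirit (apply Bourgain's alternative, then a two-piece Cauchy--Schwarz), but the engine connecting them --- the deficit on the inner region $P_k$ of $P$ --- requires the clean quantifier dichotomy at depth $d+1$, not an averaging argument at depth $1$.
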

\begin{proof} Let $n\geq 1$ be given, and fix $m\geq 7$ odd, $0<\rho<1$, and $0<\lambda<1$ to be specified later, ultimately depending only on $n$. Let $E$, $(X,t)$, and $\omega$ be given as in the statement. For any $j\geq 1$ and $Q\in\Delta$, let $\Child^j(Q)$ denote the set of all $j$-th generation descendents of $Q$ in $\Delta$; for example, $\Child^3(Q)=\{R\in\Delta:R\subset Q,\, \side R = m^{-3}\side Q\}$ is the set of all great grandchildren of $Q$. For the remainder of the proof, fix an integer $d\geq 1$ to be specified later. (We take $d=n+3$.)

For each $Q\in \Delta$, let $F_Q\in\Child(Q)$ denote the unique child that is at the center of $Q$ in the space-coordinates and forward-most-in-time, i.e.~it lies along $\partial_s Q$. Let $Q_*$ denote the unique child that is two layers backwards-in-time from $F_Q$, so that $(Q,F_Q,Q_*)$ form a triple of cubes to which we can apply Lemma \ref{alternative}. If $r=\side Q$ and $\epsilon=m^{-1}$, then $\epsilon r=\side F_Q=\side Q_*$ and $\gap(Q_*,\partial_n Q)=\frac{1}{2}(m-1)\epsilon r=:\delta r$. Hence $\epsilon/\delta \leq 1/\sqrt{6n}$ as soon as $m \geq 1+2\sqrt{6n}$; e.g.~$m\geq 7$ when $n=1$, $m\geq 9$ when $n=2$, and $m\geq 11$ when $n=3$. Other restrictions on $m$ will appear later in the proof (see the paragraph labeled \emph{Conclusion}, appearing below \emph{Alternative 2}). Alternatively, we may specify a congruent triple $(Q,F_Q,Q_*)$ by specifying either $F_Q\in \Delta$ or $Q_*\in \Delta$, in which case $Q$ is a ``translated'' $m$-adic cube, i.e.~$Q=Q'+R'$ for some $Q',R'\in\Delta$ with $\side R'=m^{-1}\side Q'=\side Q_*$.

Fix $0<\eta<1$ to be chosen later. By Lemma \ref{alternative}, for every triple $(Q,F_Q,Q_*)$, either one or both of the following conditions hold: \begin{equation}
\label{alt1} \omega^{Z,\tau}_{(\interior{Q})\setminus E}(E\cap \interior{Q}) \geq \eta\quad\text{for all }(Z,\tau)\in F_Q\setminus E,\end{equation} \begin{equation}\label{alt2}\Net^{n+2-\rho}_\infty(E\cap Q_*)\leq  \alpha_n m^{n+2-\rho} \eta\, (\side Q_*)^{n+2-\rho}= \alpha_n \eta\,(\side Q)^{n+2-\rho},\end{equation} where $\alpha_n>1$ depends only on $n$, since $n+2-\rho\geq n+1$ (see Remark \ref{n-only}). Note that if \eqref{alt2} holds for some triple $(Q,F_Q,Q_*)$ and $\alpha_n \eta\leq 1$, then $\Net^{n+2-\rho}_\infty(E\cap Q_*)=\Net^{n+2-\rho}_{\side Q}(E\cap Q_*)$. We make this stipulation on $\eta$.

Let $P\in\Delta$ be any cube such that $(X,t)\in\RR^{n+1}\setminus\overline{P}$. Once again, there are two alternatives.

\smallskip

\emph{Alternative 1. Suppose that \eqref{alt2} holds for some $(Q,F_Q,Q_*)$ with $Q_*\in\Child^{d+1}(P)$.} We will prove that $$\Net^{n+2-\rho}_{m^{-1}\side P}(E\cap P)<(\side P)^{n+2-\rho}.$$ Let $Q_*^{\uparrow j}\in\Delta$ denote the $j$-th ancestor of $Q_*$ in $\Delta$. (In general, $Q_*^{\uparrow 1}$ is different than $Q$, because $Q$ may be a ``translated'' cube.) Covering the set $E\cap P$ by $\Child(P)\setminus\{Q_*^{\uparrow d}\}$, $\Child(Q_*^{\uparrow d})\setminus\{Q_*^{\uparrow d-1}\}$, \dots, $\Child(Q_*^{\uparrow 1})\setminus \{Q_*\}$, and $E\cap Q_*$, we obtain \begin{equation*}\begin{split}\Net^{n+2-\rho}_{m^{-1}\side P}(E\cap P)\leq (m^{n+2}-1)(\side Q_*^{\uparrow d})^{n+2-\rho}+\cdots&+(m^{n+2}-1)(\side Q_*)^{n+2-\rho}\\
&+\alpha_n \eta\,(\side Q)^{n+2-\rho}.\end{split}\end{equation*} Rewriting each side length in terms of $\side P$ and rearranging, we have $$\frac{\Net^{n+2-\rho}_{m^{-1}\side P}(E\cap P)}{(\side P)^{n+2-\rho}} \leq (m^{n+2}-1)(m^{-(n+2-\rho)}+\cdots+m^{-(d+1)(n+2-\rho)})+\alpha_n\eta\, m^{-d(n+2-\rho)}.$$ Our goal is to choose $\eta$ and $\rho$ so that the right hand side of the previous displayed equation is strictly less than $1$. Assign $\eta=(1/2\alpha_n)m^{-(n+2)}$ so that the goal becomes \begin{equation}\label{rho-stip}(m^{n+2}-1)(m^{-(n+2-\rho)}+\cdots+m^{-(d+1)(n+2-\rho)}) +\frac{1}{2}m^{-(n+2)}m^{-d(n+2-\rho)}<1.\end{equation} Note that \eqref{rho-stip} holds at $\rho=0$, since $$(m^{n+2}-1)(m^{-(n+2)}+\cdots+m^{-(d+1)(n+2)})+\frac{1}{2}m^{-(d+1)(n+2)} = 1-\frac{1}{2} m^{-(d+1)(n+2)} < 1.$$ Thus, by continuity, \eqref{rho-stip} holds for any choice of $\rho>0$ that is sufficiently close to $0$ depending only on $m$, $n$, and $d$.

To summarize, for $0<\eta<1$ depending only on $m$ and $n$ and $0<\rho<1$ sufficiently close to $0$ depending only on $m$, $n$, and $d$, we have $\Net^{n+2-\rho}_{m^{-1}\side P}(E\cap P)<(\side P)^{n+2-\rho}.$ Below we shall see how to choose $m$ and $d$ depending only on $n$. This completes the argument for Alternative 1.

\smallskip

\emph{Alternative 2. Suppose that \eqref{alt1} holds for every $(Q,F_Q,Q_*)$ with $Q_*\in\Child^{d+1}(P)$.} The idea of the argument that follows, due to Bourgain \cite{Bourgain}, is quite simple---as many good ideas are. Here is one possible interpretation. Requiring \eqref{alt1} for every $(d+1)$-descendent $Q_*$ of $P$ is like placing a dense minefield across $P$. Hitting a mine means exiting the domain. If $m$ and $d$ are large, then a Brownian traveler sent into the past that meets $P$ is more likely to first hit one in a smaller proportion of mines near the boundary of $P$ than they are to first hit one in the larger proportion of mines near the center of $P$. Using the strong Markov property, this will allow us to prove $\sum_{Q\in\Child(P)}\omega(Q)^{1/2}(\vol Q)^{1/2} \leq m^{-\lambda} \omega(P)^{1/2}(\vol P)^{1/2}$ for some $m=m(n)$ and $\lambda=\lambda(n)$.

To proceed, we partition $P$ into annular rings of $d$-th generation descendents. Working from outside to inside, define $A_0=\emptyset$, $P_0= P$,
\begin{align*}
A_1 &= \bigcup\{Q\in\Child^d(P):Q\not\subset A_0,\, \overline{Q}\cap \partial P_0\neq\emptyset\}, & P_1&=P_0\setminus A_1,\\
A_2 &= \bigcup\{Q\in\Child^d(P):Q\not\subset A_1,\, \overline{Q}\cap \partial P_1\neq\emptyset\}, & P_2&=P_1\setminus A_2,\\
&\ \,\vdots &&\ \,\vdots\\
A_{M} &= \bigcup\{Q\in\Child^d(P):Q\not\subset A_{M-1},\, \overline{Q}\cap \partial P_{M-1}\neq\emptyset\}, & P_{M}&=P_{M-1}\setminus A_M,\\
A_{M+1}&= \bigcup\{Q\in\Child^d(P):Q\not\subset A_{M},\, \overline{Q}\cap \partial P_M \neq \emptyset\}=P_M, && \!\!\!\!\!\!\!\!\!\!P_{M+1}=\emptyset,
\end{align*} where $m^d=1+2M$. Next, for each annulus $A_i$, with $1\leq i\leq M$, choose a surface $G_i\subset A_i$ with flat faces such that (i) $G_i$ separates $\partial P_{i-1}$ from $\partial P_i$ and (ii) for any $(Z,\tau)\in G_i$, there exists an admissible triple $(Q,F_Q,Q_*)$ with $F_Q\in\Child^{d+1}(P)$, $(Z,\tau)\in F_Q$, $Q\subset A_i$, and $Q\cap A_{i+1}=\emptyset$. There are a continuum of possibilities for each $G_i$. See Figure \ref{fig:paw}.

We remark that each union $\widetilde A_j=A_{1+(j-1)m^{d-1}}\cup\dots\cup A_{m^{d-1}+(j-1)m^{d-1}}$ of $m^{d-1}$ consecutive rings $A_i$ of $d$-descendents is an annulus formed from children of $P$. In other words, $\widetilde A_1$ is the outermost annulus of children, $\widetilde A_2$ is the second annulus of children, ... and $\widetilde A_{1+(m-1)/2}$ is the innermost (degenerate) annulus of children.

\begin{figure}\begin{center}\includegraphics[width=.9\textwidth]{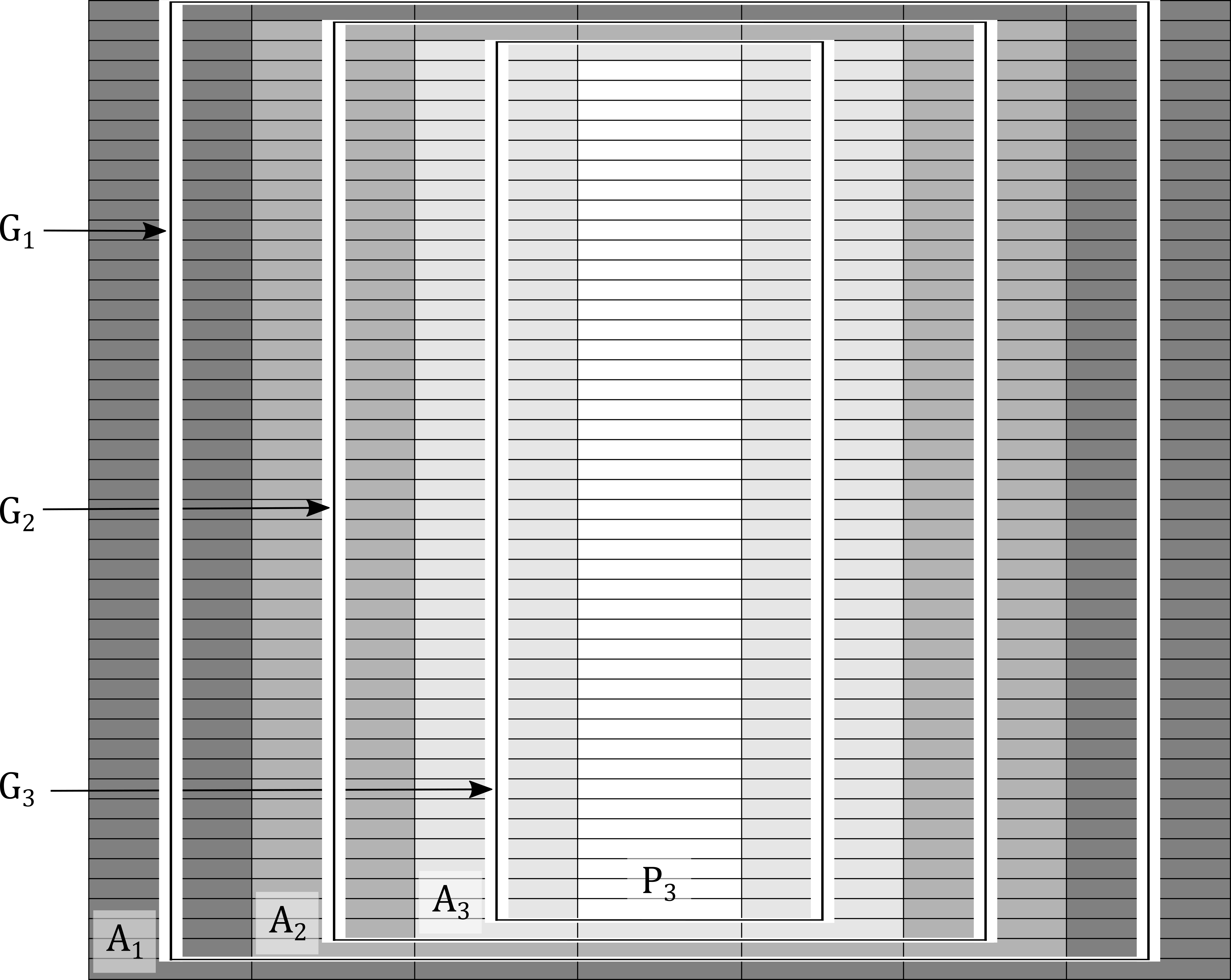}\end{center}\caption{A decomposition $P=A_1\cup A_2\cup A_3 \cup P_3$ in Alternative 2 when $n=1$, $m=7$, and $d=1$, where each wide rectangle represents a child of $P$. Brownian motion, started outside of $P$ and sent into the past, cannot meet $P_3$ without passing through surfaces $G_1$, $G_2$, and $G_3$. Each white region is the union of all $F_Q\in\Child^{2}(P)$ such that $G_i\cap F_Q\neq \emptyset$. Increasing $d$ yields a higher density $m^{d-1}$ of surfaces $G_i$ in each annulus $\widetilde A_j$ of children of $P$.}  \label{fig:paw}
\end{figure}

Next, we wish to estimate the caloric measure of certain sets in $P$. Fix any $1\leq k\leq M$. Later we will choose $k=k(n,m)$. For each $1\leq i\leq k$, let $H_i$ denote the closure of the connected component of $P\setminus G_i$ that contains $P_k$. Recall that $\Omega=\RR^{n+1}\setminus E$ and assign $G_i'=\Omega\cap\partial_e(\Omega\setminus H_i)$.

\emph{Claim 1. Caloric measure on $P_k$ is small: $\omega(P_k) \leq (1-\eta)^k\omega^{X,t}_{\Omega\setminus H_1}(G_1')$.}

To prove the claim, let $G_1,\dots, G_k$ and $G_1',\dots,G_k'$ and $H_1,\dots, H_k$ be given as above. In addition, by a slight abuse of notation, write $G_{k+1}=\partial P_k$, $H_{k+1}=\overline{P_k}$ and $G_{k+1}'=\Omega\cap\partial_e(\Omega\setminus \overline{P_k})$. Then $H_{i+1}\subset \interior{H_i}$ for all $1\leq i\leq k$. By assumption, nearly every point of $\partial_e\Omega$ is regular for the Dirichlet problem for the heat equation and $(X,t)\not\in\overline{P}$. Thus, by Corollary \ref{c-nested}, the trivial observation $\omega^{X_i,t_i}_{\Omega\setminus H_i}(G_{i+1}')\leq \omega^{X_i,t_i}_{\Omega\setminus H_i}(G_{i+1})$, and the fact that we are in Alternative 2 yield \begin{equation*}\begin{split} \label{big-nest} \omega(P_k)=\omega^{X,t}_\Omega(P_k)&\leq \omega^{X,t}_{\Omega\setminus H_1}(G_1')\prod_{i=1}^k\sup_{(X_{i},t_{i})\in G_{i}'}\omega^{X_{i},t_{i}}_{\Omega\setminus H_{i+1}}(G_{i+1})
\\ &\leq \omega^{X,t}_{\Omega\setminus H_1}(G_1')\prod_{i=1}^k\left(1-\inf_{(X_i,t_i)\in G_i'} \omega^{X_i,t_i}_{\Omega\setminus H_{i+1}}(A_i)\right) \leq \omega^{X,t}_{\Omega\setminus H_1}(G_1')(1-\eta)^k.\end{split}\end{equation*} To verify the final inequality, fix $(Z,\tau)\in G_i'$ and let $(Q,F_Q,Q_*)$ be the associated triple given by property (ii) in the definition of $G_i$. Then $\omega^{Z,\tau}_{\Omega\setminus H_{i+1}}(A_i)\geq \omega^{Z,\tau}_{\interior{Q}\setminus E}(E\cap\interior{Q}) \geq \eta$ by the maximum principle and \eqref{alt1}. QED

\emph{Claim 2. Caloric measure on $P$ is large (i.e.~not small): $\omega(P) \geq \eta\, \omega^{X,t}_{\Omega\setminus H_1}(G_1')$.}

To bound $\omega(P)$ from below, we apply the strong Markov property (Theorem \ref{t:strong-markov}) with the nested domains $\Omega\setminus H_1\subset\Omega$ and once again use the fact that we are in Alternative 2: \begin{align*}\omega(P)=\omega^{X,t}_\Omega(P) &= \omega^{X,t}_{\Omega\setminus H_1}(\partial_e\Omega\cap P) + \int_{G_1'}\omega^{Z,\tau}_\Omega(P)\,d\omega^{X,t}_{\Omega\setminus H_1}(Z,\tau)\\
&\geq \omega^{X,t}_{\Omega\setminus H_1}(G_1')\inf_{(Z,\tau)\in G_1'}\omega^{Z,\tau}_\Omega(P)\geq \eta\,\omega^{X,t}_{\Omega\setminus H_1}(G_1'), \end{align*} where the final inequality follows from the maximum principle and \eqref{alt1} as before. This concludes the proof of Claim 2.

Combining the two claims, we obtain $\omega(P_k)\leq \eta^{-1}(1-\eta)^k \omega(P)$. The upshot is that by taking $k$ (hence $m$) to be sufficiently large, we can arrange for $\omega(P_k)$ to be arbitrarily small relative to $\omega(P)$.

\emph{Conclusion (choosing the size of the grid).} Assign $d=n+3$ and $k=\lceil m^{n+2}(\log m)^2\rceil+c$, where we let $\log$ denote the natural logarithm and let $c\geq 0$ be the least integer such that $k$ is a multiple of $m^{d-1}$. Note that $k\leq M=(m^{n+3}-1)/2$ as soon as $m$ is even modestly large ($m\geq 21$ suffices). Partition the set of all children of $P$ into two collections: $$\mathcal{A}=\{Q\in\Child(P):Q\subset A_1\cup\dots\cup A_k\}\quad\text{and}\quad\mathcal{B}=\{Q\in\Child(P):Q\subset P_k\}.$$ Writing $\#\mathcal{A}=\delta m^{n+2}$, we have  \begin{align*} \sum_{Q\in\Child(P)} \omega(Q)^{1/2}(\vol Q)^{1/2} &= \sum_{Q\in\mathcal{A}} \omega(Q)^{1/2}(\vol Q)^{1/2}
+\sum_{Q\in \mathcal{B}} \omega(Q)^{1/2}(\vol Q)^{1/2}\\
&\leq \omega(A_1\cup\dots\cup A_k)^{1/2}\left(\textstyle\sum_{Q\in \mathcal{A}}\vol Q\right)^{1/2}
+ \omega(P_k)^{1/2}(\vol P_k)^{1/2}\\
&\leq \left(\delta^{1/2}+\eta^{-1/2}(1-\eta)^{k/2}\right)\omega(P)^{1/2}(\vol P)^{1/2},\end{align*} where the first inequality holds by Cauchy-Schwarz. If we can show that \begin{equation}\label{less-than-1} \delta^{1/2}+\eta^{-1/2}(1-\eta)^{k/2}<1\end{equation} for some $m$ sufficiently large depending only on $n$, then the theorem follows by imposing this requirement on $m$ and taking $\lambda = -\log_m(\delta^{1/2}+\eta^{-1/2}(1-\eta)^{k/2})$.

To finish the proof of Lemma \ref{lemma2}, let us indicate why \eqref{less-than-1} is possible. Recall that in Alternative 1 we assigned $\eta = m^{-(n+2)}/(2\alpha_n)$. Hence $\eta^{-1/2}(1-\eta)^{k/2}<1/2$ if \begin{equation}\label{less-than-2} 4\left(1-\frac{m^{-(n+2)}}{2\alpha_n}\right)^{\lceil m^{n+2}(\log m)^2\rceil+c} =4(1-\eta)^k< \eta= \frac{m^{-(n+2)}}{2\alpha_n}.\end{equation} Multiplying by $2\alpha_n$, taking logarithms, using $\log(1-x)<-x$ for $0<x<1$, and dividing through by $\log m$, we find that for \eqref{less-than-2} to hold it suffices if \begin{equation}\label{less-than-3} \frac{\log(8\alpha_n)}{\log m} - \frac{m^{-(n+2)}}{2\alpha_n} \left( m^{n+2}\log{m}\right) < -(n+2).\end{equation} This is plainly true if $m$ is sufficiently large, depending only on $n$. (This is the rationale for our choice of $k$ and $d$. We could not have verified \eqref{less-than-3} if we had chosen $k\lesssim m^{n+2}\log m$.)
Finally, looking at Figure \ref{fig:paw} as a guide, the reader may verify that $$\frac{\#B}{m^{n+2}}=\frac{\vol P_k}{\vol P} = \frac{(m^d-2k)^n(m^{2d}-2k)}{m^{d(n+2)}}\rightarrow 1\quad\text{as $m \rightarrow\infty$},$$ since $k=o(m^d)$. Thus, $$\delta=\frac{\#\mathcal{A}}{m^{n+2}} =\frac{m^{n+2}-\#\mathcal{B}}{m^{n+2}} = 1-\frac{\#\mathcal{B}}{m^{n+2}}<1/2$$ if $m$ is sufficiently large depending only on $n$. \end{proof}

We can now complete the proof of the main theorem.

\begin{proof}[Proof of {Theorem \ref{main}(ii)}] Let $\Omega\subsetneq\RR^{n+1}$ be a domain, let $E=\RR^{n+1}\setminus \Omega$, let $(X,t)\in \Omega$, and let $\omega=\omega^{X,t}_\Omega$. Assume that nearly every point in $\partial_e\Omega$ is regular for the Dirichlet problem for the heat equation. Let $m=m(n)\geq 7$, $\rho=\rho(n)>0$, and $\lambda=\lambda(n)>0$ be constants from Lemma \ref{lemma2} and set $\beta_n=\lambda\rho/(\lambda+\rho)$. We will prove that $$\overline{\dim}_H\,\omega\leq n+2-\beta_n.$$ Since caloric measure is carried by the past (Remark \ref{zero-future}) and Hausdorff dimension is countably stable (Remark \ref{stable}), it suffices to prove that $$\overline{\dim}_H\,\omega\res\{(Y,s):s<t-1/k\}\leq n+2-\beta_n\quad\text{for all $k\geq 1$.}$$ In fact, since Hausdorff dimension is invariant under translation and (parabolic) dilation, we may assume that $(X,t)=(0,\dots,0,1)$ and prove that $\mu=\omega\res\{(Y,s):s<0\}$ has upper Hausdorff dimension at most $n+2-\beta_n$. Fix $\Delta=\Delta^m(\RR^{n+1})$ with $m$ as above and let $P\in\Delta$ with $\side P\leq 1$. On one hand, if $(X,t)\not\in\overline{P}$, then Lemma \ref{lemma2} ensures that $\mathcal{M}^{n+2-\rho}(E\cap P)<(\side P)^{n+2-\rho}$ or $\sum_{Q\in\Child(P)}\mu(Q)^{1/2}(\vol Q)^{1/2} \leq m^{-\lambda}\mu(P)^{1/2}(\vol P)^{1/2}$. On the other hand, if $(X,t)\in\overline{P}$, then $P\subset \RR^n\times[0,\infty)$. Hence $\mu(P)=0$ and $\sum_{Q\in\Child(P)}\mu(Q)^{1/2}(\vol Q)^{1/2} \leq m^{-\lambda} \mu(P)^{1/2}(\vol P)^{1/2}$ holds trivially. We have checked the hypothesis of Theorem \ref{dim-lemma} for all $P\in\Delta$ with $\side P\leq 1$. Therefore, $\overline{\dim}_H\,\mu\leq n+2-\beta_n$ by Theorem \ref{dim-lemma}.\end{proof}

\section{Bourgain constants for harmonic and caloric measures} \label{sec:constants}

For all $n\geq 2$, let $b_n\in[0,1]$ denote Bourgain's constant for harmonic measure, i.e.~the largest number such that the upper Hausdorff dimension of harmonic measure is at most $n-b_n$ for all domains $\Omega\subset\RR^n$. As we noted in the introduction, $b_2=1$ and $0<b_n<1$ for all $n\geq 3$ by theorems of Jones and Wolff \cite{Jones-Wolff}, Bourgain \cite{Bourgain}, and Wolff \cite{Wolff}. The definition also makes sense when $n=1$; since harmonic measure on an open interval $I\subsetneq\RR$ is a weighted sum of Dirac masses (as the only harmonic functions on $I\subset\RR$ are the linear functions $u(x)=mx+b$), $\overline{\dim}_H\, \omega^X_I = 0$ for all $I$, whence $b_1=1$.

Let $\beta_n$ denote the optimal constant in Theorem \ref{main}(ii). Since steady state solutions of the heat equation are harmonic, it is natural to ask whether there is a relationship between $b_n$ and $\beta_n$? The answer is yes. To see this, we first need to examine the parabolic Hausdorff dimension of caloric measure on cylindrical domains.

\begin{lemma}[caloric measure on cylindrical domains]\label{cylinder-dim} Let $n\geq 1$, let $D\subset\RR^n$ be a bounded Euclidean domain, and let $\Omega=D\times\RR\subset\RR^{n+1}$. Then: \begin{enumerate}
\item $\overline{\dim}_H\, \omega^{X,t}_\Omega = \overline{\dim}_H\, \omega^{Y,s}_\Omega$ for all $(X,t),(Y,s)\in\Omega$;
\item $\omega^{X,t}_\Omega(A\times \RR)=\omega^X_D(A)$ for all Borel sets $A\subset \partial D$;
\item $(X_0,t_0)\in\partial \Omega$ is regular for the Dirichlet problem for the heat equation on $\Omega$ iff $X_0\in\partial D$ is regular for the Dirichlet problem for harmonic functions on $D$;
\item if $\omega_D=\omega^{X}_{D}$ is harmonic measure on $D$ and $\omega=\omega^{X,t}_{\Omega}$ is caloric measure on $\Omega$, then $\overline{\dim}_H\, \omega = \overline{\dim_H}\, \omega_D + 2$.
\end{enumerate}
\end{lemma}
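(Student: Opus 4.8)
Write $\Omega=D\times\RR$ and $\omega=\omega^{X,t}_\Omega$. Since $D$ is a bounded domain, $\partial_e\Omega=\partial_n\Omega=\partial D\times\RR$ while $\partial_{ss}\Omega=\partial_s\Omega=\emptyset$, and $\omega$ gives no mass to $\infty$ (Brownian motion in a bounded set exits in finite time), so $\omega$ is carried by $\partial D\times\RR$. Two structural observations drive everything: (a) $\Omega$ is invariant under $\tau_a(X,t)=(X,t+a)$ and the heat operator commutes with $\tau_a$, hence $\omega^{X,t+a}_\Omega=(\tau_a)_\#\omega^{X,t}_\Omega$, and since $\tau_a$ is a parabolic isometry, $\overline{\dim}_H$ is unchanged; (b) if $u$ is superharmonic on $D$ then $(X,t)\mapsto u(X)$ is a supertemperature on $\Omega$, because $-\Delta_X u+\partial_t u=-\Delta_X u\ge0$. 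Applying (b) to a superharmonic function equal to $+\infty$ on the Euclidean‑polar set $I_D$ of $D$‑irregular boundary points exhibits $I_D\times\RR$ as a polar set for the heat equation, so the hypothesis ``nearly every point of $\partial_e\Omega$ is regular'' needed to apply Theorem~\ref{main}(ii) to a cylinder is automatic. For (i), (a) reduces the comparison of two poles to a common time; then, $D$ being connected, $(Y,t-1)$ is backward‑in‑time accessible from $(X,t)$ within $\Omega$, so $\omega^{Y,t-1}_\Omega\ll\omega^{X,t}_\Omega$ by Remark~\ref{r:harnack}, which forces $\overline{\dim}_H\omega^{Y,t-1}_\Omega\le\overline{\dim}_H\omega^{X,t}_\Omega$ (every full set for the larger measure is full for the smaller), and symmetrizing yields equality.

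\textbf{Part (ii) and the rest of (iii).} For $g\in C(\partial D)$ set $v(X,t):=H^g_D(X)$; this is a bounded temperature on $\Omega$ with $v(X,t)\to g(X_0)$ as $(X,t)\to(X_0,t_0)$ at every $X_0\in\partial D$ regular for $D$, i.e.\ at every point of $\partial_e\Omega$ outside the polar set $I_D\times\RR$. Let $f=g\circ\pi_1$, where $\pi_1$ is the projection to the space coordinates. Running the maximum principle with a polar exceptional set (Theorem~\ref{maximum principle}) on $v-h$ for $h$ in the upper class of $f$ and on $h-v$ for $h$ in the lower class sandwiches $v$ between $L^f_\Omega$ and $U^f_\Omega$; since indicator data are resolutive (Theorem~\ref{parabolic}), $v=H^f_\Omega=\int f\,d\omega$. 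Hence $\int g\,d(\pi_{1\#}\omega)=\int g\,d\omega^X_D$ for all $g\in C(\partial D)$, so $\pi_{1\#}\omega=\omega^X_D$ and $\omega(A\times\RR)=\omega^X_D(A)$, which is (ii). For (iii) the same identity gives $H^{g\circ\pi_1}_\Omega=H^g_D\circ\pi_1$, so if $X_0$ is $D$‑irregular (witnessed by some $g$) then $(X_0,t_0)$ is $\Omega$‑irregular; conversely, $D$‑regularity of $X_0$ yields $\Omega$‑regularity of $(X_0,t_0)$ either by matching the thermal Wiener criterion for $(\RR^n\setminus D)\times\RR$ at $(X_0,t_0)$ with the classical Wiener criterion for $\RR^n\setminus D$ at $X_0$ (cf.\ \cite[\S8.5]{Watson}), or probabilistically, since Brownian motion from points near a $D$‑regular $X_0$ exits $D$ quickly and near $X_0$, which by (ii) makes $\omega^{X,t}_\Omega\to\delta_{(X_0,t_0)}$.

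\textbf{Part (iv), the inequality ``$\le$''.} By (ii), any Borel $A\subset\partial D$ with $\omega^X_D(\partial D\setminus A)=0$ satisfies $\omega(\RR^{n+1}\setminus(A\times\RR))=\omega^X_D(\partial D\setminus A)=0$, so $\omega$ is carried by $A\times\RR$. Restricted to $\partial D\times\RR$ the parabolic distance is the maximum of the Euclidean distance on $\partial D$ and the snowflake distance $|t-s|^{1/2}$ on $\RR$, which has Hausdorff dimension $2$; since Hausdorff dimension is unaffected by passing between max‑type and sum‑type product metrics, Lemma~\ref{l:products} gives $\dim_H(A\times\RR)=\dim_H A+2$. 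Taking the infimum over $A$ yields $\overline{\dim}_H\omega\le\overline{\dim}_H\omega^X_D+2$.

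\textbf{Part (iv), the inequality ``$\ge$'' — the crux.} Disintegrate $\omega=\int_{\partial D}\omega^\xi\,d\omega^X_D(\xi)$ along $\pi_1$, each $\omega^\xi$ a probability measure on $\{\xi\}\times\RR$. The decisive claim is that $\omega^\xi\ll\mathcal L^1$ for $\omega^X_D$‑a.e.\ $\xi$. I would establish this probabilistically: conditioning Brownian motion from $X$ to exit $D$ at $\xi$ is the Doob $h$‑transform with $h$ the Martin (Poisson) kernel at $\xi$, defined for $\omega^X_D$‑a.e.\ $\xi$, whose lifetime $\zeta$ obeys $\mathbb P^h_X(\zeta>u)=h(X)^{-1}\int_D p^D_u(X,z)\,h(z)\,dz$; interior time‑regularity of the Dirichlet heat kernel $p^D_u$ makes this a $C^\infty$ function of $u>0$, so $\zeta$, and hence $\omega^\xi$ (the law of $t-\zeta$), is absolutely continuous. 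Granting the claim, for any $\omega$‑full Borel $E\subset\partial D\times\RR$ the disintegration shows each fiber $E_\xi=\{s:(\xi,s)\in E\}$ is $\omega^\xi$‑full, hence of positive $\mathcal L^1$‑measure, hence of Hausdorff dimension $2$ in the snowflake metric, for all $\xi$ in a Borel $\omega^X_D$‑full set $A_0\subset\partial D$; being full, $A_0$ has $\dim_H A_0\ge\overline{\dim}_H\omega^X_D$. A Fubini‑type lower bound for Hausdorff measure ($\int^{*}\mathcal H^{2}(E_\xi)\,d\mathcal H^{s'}(\xi)\lesssim\mathcal H^{s'+2}(E)$; see \cite[Theorem~7.7]{Mattila}, which applies verbatim in the product metric on $(\partial D,\text{Euclidean})\times(\RR,|t-s|^{1/2})$) then forces $\dim_H E\ge s'+2$ for every $s'<\dim_H A_0$, so $\dim_H E\ge\overline{\dim}_H\omega^X_D+2$; together with the previous paragraph this proves (iv). The main obstacle is precisely this claim — that conditioning on the exit point leaves a non‑degenerate (absolutely continuous) exit‑time distribution for $\omega^X_D$‑a.e.\ exit point — which rests on Martin boundary theory for a general bounded domain together with time‑regularity of the Dirichlet heat kernel; the remaining ingredients (the maximum principle, translation invariance, and the slicing inequality) are routine.
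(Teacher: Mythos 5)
Your parts (i)--(iii) and the ``$\leq$'' direction of (iv) track the paper, with the addition that you give self-contained maximum-principle derivations where the paper simply cites Doob. The ``$\geq$'' direction of (iv) is where you take a genuinely different route, and it has a gap. The paper slices an $\omega$-full Borel set $E\subset\partial D\times\RR$ in the \emph{time} direction: the key claim (attributed to Doob) is that if a positive-measure set of times $\tau$ had $\omega_D(E_\tau)<1-\epsilon$, then $\omega(E)<1$; this exploits time-translation invariance and the elementary absolute continuity of the (unconditioned) exit-time law, with Lebesgue on the time line as the base of the Fubini slicing inequality, which is the standard and easy form. You slice in the \emph{space} direction, disintegrating $\omega=\int_{\partial D}\omega^\xi\,d\omega_D(\xi)$ along the spatial projection, and you reduce the problem to the claim that the conditional exit-time law $\omega^\xi$ is absolutely continuous with respect to Lebesgue for $\omega_D$-a.e.\ $\xi$.

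That claim is the crux and you have not closed it. Your $h$-transform sketch leaves two things unaddressed. First, for a general bounded domain the Martin boundary need not coincide with the Euclidean boundary, so ``the Martin kernel at $\xi\in\partial D$'' may not be a single minimal harmonic function; this is repairable (a mixture of $h$-transform exit-time laws is still absolutely continuous), but the disintegration along $\pi_1$ has to be matched to the Martin representation rather than invoked directly. Second, the regularity in $u$ of $P^D_u h(X)$ for a positive, possibly unbounded, harmonic $h$ on a rough bounded domain --- needed to conclude that $\mathbb{P}^h_X(\zeta>u)$ is absolutely continuous --- is asserted but not argued; one has to justify differentiation under the integral or appeal to semigroup analyticity, which is not automatic when $h\notin L^p(D)$. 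Separately, Mattila's Theorem 7.7 is a slicing inequality whose base measure is integer-dimensional Lebesgue; your application needs a fractional $\Haus^{s'}$ base, for which the correct move is to take a Frostman measure $\mu_0$ on $A_0$ with $\mu_0(B(\xi,r))\lesssim r^{s'}$ and derive $\int\mathcal{L}^1(E_\xi)\,d\mu_0(\xi)\lesssim\Haus^{s'+2}(E)$ by hand from the covering definition. None of these obstacles is fatal, but they make the space-slicing route considerably heavier than the paper's time-slicing, which avoids Martin-boundary theory entirely and uses only the standard form of the slicing inequality.
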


\begin{proof} To prove (i), fix two poles $(X,t),(Y,s)\in\Omega$ and suppose that $\omega^{X,t}_\Omega(\partial\Omega\setminus E)=0$ for some Borel set $E$. We will find a Borel set $F$ such that $\dim_H F = \dim_H E$ and $\omega^{Y,s}_\Omega(\partial\Omega\setminus F)=0$. Choose any $r\geq 0$ such that $t+r>s$ and assign $F=E+re_{n+1}$. On one hand, $\omega^{X,t+r}_\Omega(\partial\Omega\setminus F)=\omega^{X,t}_\Omega(\partial\Omega\setminus E)=0$ by invariance of solutions to the heat equation on $\Omega$ under translations in time. On the other hand, $\omega^{Y,s}_\Omega\ll\omega^{X,t+r}_\Omega$ by Remark \ref{r:harnack}. Hence $\omega^{Y,s}_\Omega(\partial\Omega\setminus F)=0$. Since $\dim_H F=\dim_H E$, we conclude that $\overline{\dim_H}\, \omega^{Y,s}_\Omega\leq \overline{\dim_H}\, \omega^{X,t}_\Omega$. Swapping the roles of $(X,t)$ and $(Y,s)$ yields (i).

For items (ii) and (iii), see \cite[p.~335]{Doob}. In particular, if $A\subset \partial D$ is any Borel set with full harmonic measure on $D$, then $A\times\RR$ has full caloric measure on $\Omega$. Hence $\overline{\dim}_H\,\omega \leq \dim_H(A\times \RR)=\dim_H(A)+\dim_H(\RR)=\dim_H(A)+2$ by Lemma \ref{l:products}. (Recall that we are computing Hausdorff dimensions relative to the parabolic metric on $\RR^{n+1}$.) Letting $A$ vary, we obtain $\overline{\dim}_H\, \omega \leq \overline{\dim}_H\,\omega_D+2.$ This gives half of (iv).

For the reverse inequality, suppose that $E\subset\partial\Omega$ is a Borel set and consider the set of time-slices $E_\tau=E\cap (\RR^n\times\{\tau\})$. If there is $\epsilon>0$ and $T\subset(-\infty,t)$ with $\Haus^2(T)>0$ such that $\omega_D(E_\tau)<1-\epsilon$ for each $\tau\in T$, then $\omega(\bigcup_{\tau\in T} E_\tau)<(1-\epsilon)\,\omega(\partial D\times T).$ This can be shown using the transition density for Brownian motion started at $X$, stopped when it exits $D$ (see e.g.~\cite[p.~593]{Doob}). Thus, if $E$ has full $\omega$ measure, then $\omega_D(E_\tau)=1$, $\dim_H E_\tau \geq \overline{\dim}_H\,\omega_D$ for $\Haus^2$-a.e.~$\tau\in(-\infty,t)$, and $\dim_H E\geq \overline{\dim}_H\,\omega_D + 2$. Therefore, $\overline{\dim}_H\, \omega \geq \overline{\dim}_H\,\omega_D + 2$. This completes the proof of (iv).
\end{proof}

\begin{lemma} For all $n\geq 1$, we have $\beta_n\leq b_n$.\end{lemma}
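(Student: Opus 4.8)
The plan is to prove $\beta_n\le b_n$ by exhibiting, for every $\varepsilon>0$, a domain $\Omega\subset\RR^{n+1}$ satisfying the regularity hypothesis of Theorem \ref{main}(ii) for which $\overline{\dim}_H\,\omega^{X,t}_\Omega>n+2-b_n-\varepsilon$. Since $\beta_n$ is the optimal constant in Theorem \ref{main}(ii) — equivalently, $\beta_n=n+2-\sup\overline{\dim}_H\,\omega^{X,t}_\Omega$ with the supremum taken over admissible domains — such examples force $\beta_n\le b_n+\varepsilon$, and letting $\varepsilon\downarrow0$ completes the proof. The natural candidates are cylinders over Euclidean domains carrying large-dimensional harmonic measure, which is exactly the setting of Lemma \ref{cylinder-dim}.

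First I would use the definition of $b_n$ to fix, for the given $\varepsilon>0$, a bounded Euclidean domain $D\subset\RR^n$ with $\overline{\dim}_H\,\omega^X_D>n-b_n-\varepsilon$ (for $n=1$ any bounded open interval works, since then $\overline{\dim}_H\,\omega^X_D=0=n-b_1$; that the supremum defining $b_n$ is realized along \emph{bounded} domains is standard — the known extremal constructions are bounded, and in general one may restrict an extremal domain to a large ball using the countable stability of $\overline{\dim}_H$). Put $\Omega=D\times\RR$, which is a domain with $\Omega\subsetneq\RR^{n+1}$. Lemma \ref{cylinder-dim}(iv) then gives
\[
\overline{\dim}_H\,\omega^{X,t}_\Omega=\overline{\dim}_H\,\omega^X_D+2>n+2-b_n-\varepsilon,
\]
which is the required dimension bound.

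It remains to check that $\Omega$ is admissible, i.e., that nearly every point of $\partial_e\Omega$ is regular. For the cylinder $\Omega=D\times\RR$ one has $\partial_e\Omega=\partial_n\Omega=\partial D\times\RR$, and by Lemma \ref{cylinder-dim}(iii) the points of $\partial\Omega$ irregular for the heat equation are exactly those of $I_D\times\RR$, where $I_D\subset\partial D$ is the set of points irregular for the Dirichlet problem for harmonic functions on $D$. So it suffices to show that $I_D\times\RR$ is polar for the heat equation. By Kellogg's theorem, $I_D$ is polar for the Laplacian in $\RR^n$ (and empty when $n=1$), so there are an open set $U\supseteq I_D$ in $\RR^n$ and a superharmonic function $v$ on $U$ with $v\equiv+\infty$ on $I_D$. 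The key observation is that $\widetilde v(Y,s):=v(Y)$ is a supertemperature on $U\times\RR$: extending constantly in time preserves lower semicontinuity, local integrability, and finiteness on a dense set, and turns $\Delta v\le0$ into $-\partial_s\widetilde v+\Delta_Y\widetilde v\le0$ — the infinitesimal form of the fact, used elsewhere in this section, that steady-state solutions of the heat equation are harmonic. Since $\widetilde v\equiv+\infty$ on $I_D\times\RR$, that set is polar for the heat equation, and hence $\Omega$ is admissible.

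I expect the main (and rather mild) obstacle to be exactly the two standard points flagged above: that the supremum defining $b_n$ is already attained along bounded domains, and that a superharmonic function extended constantly in the time variable is a supertemperature, so that Euclidean polar sets ``thicken'' to parabolic polar cylinders. Everything else is a direct unwinding of the definitions of $b_n$ and $\beta_n$ together with Lemma \ref{cylinder-dim}.
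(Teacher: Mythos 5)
Your proof is correct and follows essentially the same route as the paper: choose a bounded $D\subset\RR^n$ whose harmonic measure realizes $\overline{\dim}_H\omega_D > n-b_n-\epsilon$, pass to the cylinder $\Omega=D\times\RR$, and apply Lemma~\ref{cylinder-dim}(iii)--(iv). The only difference is that the paper just asserts that Lemma~\ref{cylinder-dim}(iii) yields the needed ``nearly everywhere regular'' hypothesis, whereas you make explicit why $I_D\times\RR$ is parabolically polar, via Kellogg's theorem and the observation that a superharmonic function on $U\subset\RR^n$, extended constantly in time, is a supertemperature on $U\times\RR$; that thickening argument is correct and a reasonable thing to spell out.
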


\begin{proof} The value of Bourgain's constant is unchanged if we demand that the domains in question are bounded. Given $\epsilon>0$, choose a bounded domain $D\subset\RR^n$ with harmonic measure $\omega_D=\omega^{X}_D$ such that $\overline{\dim}_H\,\omega_D\geq n-b_n-\epsilon$. It follows from Lemma \ref{cylinder-dim}(iii) that nearly every point of $\partial_e\Omega=\partial D\times\RR$ of the cylinder domain $\Omega=D\times\RR$ is regular for the Dirichlet problem for the heat equation on $\Omega$. By Lemma \ref{cylinder-dim}(iv) and the definition of $\beta_n$, we have $n+2-\beta_n\geq \overline{\dim}_H\, \omega^{X,0}_\Omega = \overline{\dim}_H\,\omega_D+2 > n+2-b_n-\epsilon.$ Sending $\epsilon\rightarrow 0$ and rearranging, we discover $\beta_n\leq b_n$.\end{proof}

\begin{corollary} We have $\beta_1\leq 1$, $\beta_2\leq 1$, and $\beta_n\leq b_n$ for all $n\geq 3$.\end{corollary}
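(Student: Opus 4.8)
The plan is to package the corollary directly from the preceding lemma together with the known exact values of Bourgain's constant in dimensions $1$ and $2$. The lemma above already establishes $\beta_n \le b_n$ for every $n \ge 1$, so for $n \ge 3$ there is nothing further to prove, and the only remaining task is to substitute the values $b_1 = 1$ and $b_2 = 1$ into the cases $n = 1$ and $n = 2$.

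For $n = 1$, I would recall the observation from the opening of \S\ref{sec:constants}: harmonic measure on a proper open interval $I \subsetneq \RR$ is a weighted sum of at most two Dirac masses, since the only harmonic functions on an interval are affine; hence $\overline{\dim}_H\,\omega^X_I = 0$ for every such $I$, so $b_1 = 1$. Combining this with the lemma gives $\beta_1 \le b_1 = 1$. For $n = 2$, I would invoke the Jones--Wolff theorem \cite{Jones-Wolff}, which asserts precisely that the upper Hausdorff dimension of harmonic measure is at most $1$ on every planar domain, i.e.\ that $b_2 = 1$; the lemma then yields $\beta_2 \le b_2 = 1$.

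There is essentially no obstacle here: the corollary is bookkeeping layered on top of the lemma and of results quoted in the introduction. The only point worth noting is that $b_2 = 1$ is a genuinely deep external input (the Jones--Wolff theorem), not something established in this paper, so the proof should cite it rather than attempt to reprove it, whereas $b_1 = 1$ is elementary and can be justified in a single line as above.
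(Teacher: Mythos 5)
Your proof is correct and is exactly the intended argument: the paper states the corollary without a separate proof, as it is an immediate substitution of $b_1 = b_2 = 1$ (elementary and Jones--Wolff, both already quoted in the paper) into the preceding lemma $\beta_n \le b_n$.
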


Although the value of $b_n$ is currently unknown when $n\geq 3$, we may still ask:

\begin{question} Is $\beta_n=b_n$?\end{question}

\renewcommand{\baselinestretch}{1.05}

\providecommand{\bysame}{\leavevmode\hbox to3em{\hrulefill}\thinspace}
\providecommand{\MR}{\relax\ifhmode\unskip\space\fi MR }
\providecommand{\MRhref}[2]{%
  \href{http://www.ams.org/mathscinet-getitem?mr=#1}{#2}
}
\providecommand{\href}[2]{#2}

\end{document}